\theoremstyle{plain}
\newtheorem{thm}{Theorem}[section]
\newtheorem{cor}[thm]{Corollary}
\newtheorem{lem}[thm]{Lemma}
\theoremstyle{definition}
\newtheorem{defn}[thm]{Definition}
\theoremstyle{remark}
\newtheorem{rem}[thm]{Remark}
\theoremstyle{plain}
\newcommand{\T}{\mathrm{T}}
\newcommand{\R}{\mathbb{R}}
\newcommand{\N}{\mathbb{N}}
\newcommand{\identity}{\mathrm{id}}
\newcommand{\ric}{\mathrm{Ric}}
\newcommand{\trace}{\mathrm{tr}}
\newcommand{\kernel}{\mathrm{ker}}
\newcommand{\volume}{\mathrm{vol}}
\newcommand{\Diff}{\mathrm{Diff}}
\newcommand{\image}{\mathrm{im}}
\newcommand{\arcsinh}{\mathrm{arcsinh}}
\newcommand{\gradient}{\mathrm{grad}}
\newcommand{\diver}{\mathrm{div}}
\newcommand{\brk}{\text{ }}
\newcommand{\abg}[1]{|#1|_{g}}
\newcommand{\go}[1]{\/^{\scriptscriptstyle{(#1)}}\!\tilde g}
\newcommand{\eq}[1]{\begin{equation}#1\end{equation}}
\newcommand{\alg}[1]{\begin{aligned}#1\end{aligned}}
\newcommand{\bc}{\begin{cases}}
\newcommand{\ec}{\end{cases}}
\newcommand{\Chr}[3]{\Gamma^{#1}_{#2 #3}}
\newcommand{\Chrh}[3]{\hat{\Gamma}^{#1}_{#2 #3}}
\newcommand{\p}[1]{\partial_{#1}}
\newcommand{\Ab}[1]{\|#1\|}
\newcommand{\Abk}[2]{\|#1\|_{H^{#2}}}
\newcommand{\Abkg}[2]{\|#1\|_{H^{#2}(g)}}
\newcommand{\ga}{\gamma}
\newcommand{\Si}{\Sigma}
\newcommand{\La}{\Lambda}
\newcommand{\De}{\Delta}
\newcommand{\na}{\nabla}
\newcommand{\mcl}[1]{\mathcal{#1}}
\newcommand{\mcr}[1]{\mathscr{#1}}
\renewcommand{\title}[1]{{\bfseries #1}\par}
\renewcommand{\author}[1]{\medskip{#1}\par\smallskip}
\numberwithin{equation}{section}
\begin{document}
\begin{center}
\title{\LARGE Stable fixed points of the Einstein flow with positive cosmological constant}
\vspace{3mm}
\author{\large David Fajman and Klaus Kröncke}
\Footnotetext{}{1991 \emph{Mathematics Subject Classification.} 58J45,53C25,83C05.}
\Footnotetext{}{\emph{Key words and phrases.} Nonlinear Stability, General Relativity, Einstein metrics, Einstein flow.}

\vspace{1mm}
%\affiliation{Universität Potsdam, Institut für Mathematik\\Am Neuen Palais 10\\14469 Potsdam, Germany} 
%\email{klaus.kroencke@uni-potsdam.de} 
\end{center}

\begin{center}
April 30, 2018
\end{center}
\vspace{2mm}

\begin{abstract}
We prove nonlinear stability for a large class of solutions to the Einstein equations with a positive cosmological constant and
compact spatial topology in arbitrary dimensions, where the spatial metric is Einstein with either positive or negative Einstein constant. The proof uses the CMC Einstein flow and
stability follows by an energy argument. We prove in addition that the development of non-CMC initial data close to the background contains a CMC hypersurface,
which in turn implies that stability holds for arbitrary perturbations. Furthermore, we construct a one-parameter family of initial data such that above a critical parameter value the corresponding development is future and past incomplete.
%We prove nonlinear stability of Einstein metrics on compact manifolds without boundary in dimensions $n\geq 2$  under the rescaled CMC-Einstein flow with positive cosmological constant. We exclude the case of the round sphere and those metrics for which $2R(\ga)/n$ is contained in the spectrum of the negative Laplacian. The results imply that all such solutions admit CMC foliations and are future geodesically complete. The nonlinear stability is shown by an energy estimate for the rescaled Einstein flow. In addition, we show that the future development of non-CMC initial data, close to the background, admits a CMC hypersurface, which in combination shows stability for arbitrary perturbations. An inverse rescaling immediately implies that nonlinear stability holds for spacetimes of the form
%$-dt^2+\cosh^2(t)\ga_+$ and $-dt^2+\sinh^2(t)\ga_-$ for Einstein metrics $\ga_+$, $\ga_-$ of positive and negative curvature, respectively, under the restrictions imposed above. 
\end{abstract}
 
%

%\tableofcontents

\section{Introduction}
Understanding the long time behavior of the Einstein flow and the global geometry of the resulting spacetimes has been a major field of interest in General Relativity for the past 30 years. A particularly successful area concerns the nonlinear stability problem for explicit solutions to Einstein's field equations. The first general results are due to Friedrich \cite{Fr86} for deSitter space-time and Christodoulou-Klainerman \cite{ChKl93} for the Minkowski space-time. Since then several results of a similar nature for different backgrounds have been established. In this paper we focus on the Einstein-flow with a positive cosmological constant.

\subsection{The positive cosmological constant}
The late time asymptotics of homogeneous cosmological models in the presence of a cosmological constant have been first analyzed by Wald \cite{Wa83}. Following Friedrich's work on the stability problem \cite{Fr86,Fr86-1} for the 3+1-dimensional case, Anderson generalized the stabilty result to asymptotically de Sitter space of arbitrary even dimension \cite{An05}. Later, Ringstr\"om was able to find conditions on the initial data, such that the global evolution problem can be localized to a coordinate neighborhood \cite{Ri08}. Using this local result he showed stability and future completeness for large classes of initial data on arbitrary spatial topologies. This implies that the spatial topology itself cannot be deduced from the long time behavior of the Einstein flow in the presence of a positive cosmological constant. A result of this nature had also been established by Friedrich in \cite{Fr86-1}. Ringstr\"om's results hold for the Einstein-scalar field system and he later generalized them to the Einstein-Vlasov-scalar field system in \cite{Ri13}. Similar results have been obtained by Svedberg for the Einstein-Maxwell system \cite{Sv11}.\\
In the case of the Einstein-Euler system questions of long-time existence are complicated by the likely appearance of shocks. However, in the presence of a positive cosmological constant it has been shown by Rodnianski and Speck that the accelerated expansion is sufficiently strong to avoid shock formation in the non-vacuum setting for the irrotational Einstein-Euler system \cite{RoSp13} and by Speck for the general Einstein-Euler system \cite{Sp12}.

\subsection{The CMC-Einstein Flow on compact manifolds}
In the study of nonlinear stability of expanding solutions to the vacuum Einstein-flow with vanishing cosmological constant CMC (constant mean curvature) foliations have been proven to be very beneficial. The study of the CMC Einstein flow was initiated by the work of Andersson, Moncrief and Tromba on the global existence of CMC foliations of vacuum solutions of the Einstein equations in 2+1 dimensions \cite{AnMoTr97}. Fischer and Moncrief \cite{FiMo01,FiMo02} studied the Einstein flow in CMC gauge for the higher dimensional case, which eventually led to the proof of stability for FLRW(Friedmann-Lema\^{\i}tre-Robertson-Walker) type solutions in 3+1 dimensions by Andersson and Moncrief \cite{AnMo04} and finally to the stability of a large class of spatial Einstein geometries of negative scalar curvature in arbitrary dimensions by the same authors in 2011 \cite{AMo11}. The proof in \cite{AMo11} is based on a carefully adjusted energy argument which shows the asymptotic convergence of the perturbed solution to the spatial Einstein metric.\\
The motivation for the present paper is the study of the CMC Einstein flow with a positive cosmological constant, which has so far only been considered for the 2+1 dimensional case by Andersson, Moncrief and Tromba in \cite{AnMoTr97}. 
\subsection{Main Results}
%The aim of this paper is to prove nonlinear stability for the Einstein flow in CMCSH gauge (\emph{constant mean curvature spatial harmonic gauge}) with a positive cosmological constant for arbitrary spatial dimensions $n\geq 2$ and different background geometries.\\
%We begin by constructing homogeneous model solutions for the CMC-Einstein flow with positive cosmological constant which have spatial Einstein geometries with positive and negative Einstein constants. In particular, we construct a one-parameter family of initial data, such that there is a critical parameter value above which the corresponding future and past development recollapses, while for smaller values the developement is future and past complete. This family illustrates the variety of scenarios which might occur despite the presence of a cosmological constant. In particular, the threshold solution between both regimes is a new example for an unstable solution to the Einstein equations.\\
%Our main result is the proof of nonlinear stability for a large class of spatial Einstein-geometries with positive as well as negative Einstein constants in arbitrary dimensions $n>2$.  In addition, we show that the future development of arbitrary perturbations, not necessarily those which solve the CMC constraints, is geodesically complete towards the future and admits a CMC-foliation. The main theorem in the case of a negative Einstein constant is the following.\\

Our main result is the nonlinear stability for a large class of solutions to the Einstein equation with positive cosmological constant.
The background solutions are homogeneous model solutions for the CMC-Einstein flow with positive cosmological constant where the spatial metric is an Einstein metric with positive or negative Einstein constant on a compact manifold with arbitary dimension $n\geq2$. The main theorem in the case of a negative Einstein constant is the following.
\begin{thm}\label{main-thm-neg}
Let $M$ be a smooth compact n-dimensional manifold ($n\geq 2$) without boundary and $\gamma$ be an Einstein metric satisfying ~$\ric(\ga)=-(n-1)\ga$. Then for $s>n/2+2$, $s'>n/2+s$ and $\varepsilon>0$ there exists a $\delta(\varepsilon)>0$ s.t.~for initial data $(g_0,k_0)$ satisfying
\eq{
\left\| g_0-\gamma\right\|_{H^{s'}}+\left\|k_0+\sqrt{2}\gamma\right\|_{H^{s'-1}}<\delta
}  
its maximal globally hyperbolic development under the Einstein flow with positive cosmological constant $\Lambda=\frac{n(n-1)}{2}$  can be foliated by CMC-hypersurfaces $M_t$, $t\in [\arcsinh(1),\infty)$ such that the induced metrics $g_t$ satisfy
\eq{
\left\|\sinh^{-2}(t) g_t-\gamma\right\|_{H^{s}}<\varepsilon.
}  
In particular, all corresponding homogeneous solutions are orbitally stable and the future developments of small perturbations are future geodesically complete.
\end{thm}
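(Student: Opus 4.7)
The plan is to prove stability by an energy argument in CMCSH (constant mean curvature spatial harmonic) gauge, following the strategy of Andersson--Moncrief for the $\Lambda=0$ negative-Einstein case but exploiting the additional damping from $\Lambda>0$. The homogeneous background associated with the initial data is $g(t)=\sinh^{2}(t)\,\gamma$ and $k(t)=-\coth(t)\,g(t)$, with mean curvature $\tau(t)=-n\coth(t)$; at $t=\arcsinh(1)$ this reproduces exactly $(\gamma,-\sqrt{2}\gamma)$, matching the hypothesis. Since $\tau$ is monotone in $t$ on $[\arcsinh(1),\infty)$, I would use $t$ (equivalently $\tau$) as the CMC time coordinate and impose the spatial harmonic gauge relative to $\gamma$. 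I would then rewrite the Einstein equations with $\Lambda=n(n-1)/2$ as an evolution system for the rescaled metric $\tilde g:=\sinh^{-2}(t)\,g$ and the rescaled traceless part $\tilde\Sigma$ of $k$, coupled to the elliptic equations that, in CMCSH gauge, determine the lapse $N$ and shift $X$. In these variables the background becomes the stationary fixed point $(\tilde g,\tilde\Sigma)=(\gamma,0)$.

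After linearizing at this fixed point, the principal operator governing the perturbation is essentially the Lichnerowicz Laplacian of $\gamma$, together with zeroth-order friction terms coming from $\coth(t)$ coefficients. Because $\mathrm{Ric}(\gamma)=-(n-1)\gamma$, this operator has strictly positive spectrum on transverse-traceless tensors; combined with the expansion-induced damping, the linearization is exponentially contractive in suitable Sobolev norms. Following the Andersson--Moncrief philosophy, I would construct a corrected energy functional of the schematic form $E_s=\sum_{j\le s}\bigl(\|D^j(\tilde g-\gamma)\|_{L^2(\gamma)}^2+c\,\|D^j\tilde\Sigma\|_{L^2(\gamma)}^2\bigr)$, possibly augmented by cross terms $\langle D^j(\tilde g-\gamma),D^j\tilde\Sigma\rangle$ to cancel indefinite contributions, and prove a differential inequality of the form $\dot E_s\le -\alpha\,E_s+\beta\,E_s^{3/2}$ with $\alpha>0$ uniform in $t$ and $\beta$ bounded.

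With the energy estimate in hand, a standard bootstrap closes the argument: assuming $E_s(t)\le\varepsilon^2$ on a maximal subinterval, the improved estimate together with the continuation criterion for the CMCSH flow forces the solution to extend to all $t\in[\arcsinh(1),\infty)$ and to satisfy $\|\sinh^{-2}(t)g_t-\gamma\|_{H^s}<\varepsilon$ throughout. The assumption $s'>s+n/2$ provides the $L^\infty$-control of the nonlinearities via Sobolev embedding and gives the loss of derivatives needed to propagate the improved estimate. Future geodesic completeness then follows from uniform control of the rescaled lapse (from its elliptic equation) together with the growth of the physical scale factor $\sinh(t)\to\infty$, which forces causal geodesics to accumulate infinite proper time. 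For initial data that are not CMC, I would first invoke local well-posedness in a standard hyperbolic gauge and then use an implicit function theorem argument to locate a CMC hypersurface inside the development close to the initial slice; here the linearization of the mean curvature in the normal direction is elliptic with a strictly favorable sign thanks to the expansion, so a nearby CMC slice exists uniquely, and the CMCSH flow can be restarted on it.

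The main obstacle is establishing the damping structure of the linearized energy uniformly in $t$. Already in the $\Lambda=0$ setting the positivity of such an energy hinges on delicate spectral properties of the Lichnerowicz Laplacian on negative Einstein backgrounds; here one must additionally verify that the $\coth(t)$ coefficients governing the friction and the indefinite cross terms remain controllable as $t\to\infty$ (where $\coth(t)\to 1$), and that the elliptic estimates for $N-N_{\mathrm{bg}}$ and $X$ in the rescaled geometry are uniform in $t$. Balancing these competing $t$-dependent factors so that the damping beats the cross terms over the whole evolution is the technical heart of the proof.
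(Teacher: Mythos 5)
Your overall architecture (CMCSH gauge, rescaling the metric by $\sinh^{2}(t)$, elliptic estimates for lapse and shift, an energy bootstrap, and an implicit-function-theorem argument to locate a CMC slice for non-CMC data) matches the paper's. But the heart of your argument---the damping mechanism for the energy---contains a genuine gap, in two related ways. First, you assert that because $\ric(\gamma)=-(n-1)\gamma$ the linearized (Lichnerowicz/Einstein) operator ``has strictly positive spectrum on transverse-traceless tensors.'' This is false for general negative Einstein metrics; it is precisely the extra spectral hypothesis Andersson--Moncrief must impose in the $\Lambda=0$ setting, and Theorem \ref{main-thm-neg} assumes nothing of the sort. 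Second, your target inequality $\dot E_s\le-\alpha E_s+\beta E_s^{3/2}$ with $\alpha>0$ uniform in $t$ cannot hold: it would force $\sinh^{-2}(t)g_t\to\gamma$ exponentially, whereas the rescaled metrics in fact converge to a nearby limit metric that is in general \emph{not} $\gamma$. The coupling term that could drive $g-\gamma$ back to zero carries a prefactor $n/\sinh(\mathrm{T})$ which decays, so the metric perturbation freezes rather than damps; the paper notes explicitly that the corrected-energy/cross-term mechanism of Andersson--Moncrief fails here.

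What actually closes the argument is different: the time derivative of the \emph{uncorrected} energy contains a single large negative term, essentially $-2(n-1)\tfrac{\cosh(\mathrm{T})}{\sinh(\mathrm{T})}\left\|\Sigma\right\|^2_{H^{s-1}}$, which is used only to \emph{absorb} (for small data) those perturbation terms that lack a $1/\sinh(\mathrm{T})$ factor; every remaining term is bounded by $\tfrac{C}{\sinh(\mathrm{T})}\mathbf{E}_s$. Since $\int_{\arcsinh(1)}^{\infty}\tfrac{d\mathrm{T}}{\sinh(\mathrm{T})}<\infty$, Gronwall yields only a uniform bound $\mathbf{E}_s(\mathrm{T})\le C\,\mathbf{E}_s(\mathrm{T}_0)$---orbital stability, which is all the theorem claims---and decay is recovered separately, one derivative down and only for $\Sigma$ and $X$, in an improved-decay lemma that then feeds the completeness argument. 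Without replacing your uniform-damping mechanism by this absorption-plus-integrable-coefficient argument (or adding a spectral hypothesis the theorem does not make), your proof does not close.
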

\noindent In the case of positive Einstein constant, the main theorem is the following.
\begin{thm}\label{main-thm}
Let $M$ be a smooth compact n-dimensional manifold ($n\geq 2$) without boundary and $\gamma$ be an Einstein metric satisfying ~$\ric(\ga)=(n-1)\ga$ which does not admit Killing vector fields and such that $-2(n-1)$ is not an eigenvalue of the Laplacian. Then for $s>n/2+2$, $s'>n/2+s$ and $\varepsilon>0$ there exists a $\delta(\varepsilon)>0$ s.t.~for initial data $(g_0,k_0)$ satisfying
\eq{
\left\| g_0-\gamma\right\|_{H^{s'}}+\left\|k_0\right\|_{H^{s'-1}}<\delta
}  
its maximal globally hyperbolic development under the Einstein flow with positive cosmological constant $\Lambda=\frac{n(n-1)}{2}$  can be globally foliated by CMC-hypersurfaces $M_t$, $t\in\R$ such that the induced metrics $g_t$ satisfy
\eq{
\left\|\cosh^{-2}(t) g_t-\gamma\right\|_{H^{s}}<\varepsilon.
}  
In particular, all corresponding homogeneous solutions are orbitally stable and the future-  and past developments of small perturbations are future- and past geodesically complete, respectively.
%In particular, all corresponding homogeneous solutions are orbitally stable and the future developments of small perturbations admit CMC-foliations.
\end{thm}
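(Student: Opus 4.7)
The plan is to adapt the energy-based stability argument of Andersson--Moncrief to the positive cosmological constant setting, using the CMCSH gauge and a rescaling that factors out the exponential expansion of the background solution $\bar g(t)=\cosh^2(t)\gamma$, $\bar k(t)=-\cosh(t)\sinh(t)\gamma$ (for which $\bar k(0)=0$, consistent with the hypothesis $\|k_0\|_{H^{s'-1}}<\de$).

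First, set up the gauge. Parametrize the prospective CMC foliation by $t\in\R$ so that the mean curvature is $\tau(t)=-n\tanh(t)$, sweeping out $(-n,n)$ as $t$ ranges over $\R$. In CMCSH gauge the lapse $N$ and shift $X$ are determined by elliptic equations sourced by the perturbation; invertibility of the relevant operators near the background uses both standing assumptions on $\gamma$: the absence of nontrivial Killing vector fields removes the kernel of the shift equation, while $-2(n-1)\notin\spectrum(\De_\gamma)$ gives unique solvability of the linearized lapse equation, whose principal part at the background is $\De_\gamma+2(n-1)$ up to constants.

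Second, introduce rescaled unknowns $\hat g=\cosh^{-2}(t)g$ and a rescaled tracefree part $\hat\Si=\cosh^{-1}(t)(k-\tfrac{\tau}{n}g)$. The rescaled Einstein system is quasilinear with linearization at $(\gamma,0)$ on transverse-traceless symmetric 2-tensors governed by a Lichnerowicz-type operator which, under the stated spectral assumptions together with Einstein-rigidity of $\gamma$, has a positive lower bound on the orthogonal complement of its (trivial) kernel. Constraint propagation combined with the elliptic control of $(N,X)$ reduces the problem to an estimate for $(\hat g-\gamma,\hat\Si)$ in $H^s\times H^{s-1}$.

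Third, construct a corrected Sobolev energy $\mcl{E}_s$ combining $\|\hat g-\gamma\|_{H^s}^2$, $\|\hat\Si\|_{H^{s-1}}^2$ and carefully chosen cross-terms so that top-order contributions in the evolution equations either cancel or turn into a dissipative term. The aim is a differential inequality of the schematic form $\frac{d}{dt}\mcl{E}_s\leq -c|\tanh(t)|\mcl{E}_s+C\mcl{E}_s^{3/2}$, which by continuity/bootstrap keeps $\mcl{E}_s(t)$ uniformly small for all $t\in\R$ and yields exponential decay as $|t|\to\infty$. Sobolev embedding with $s>n/2+2$ converts this into $\|\cosh^{-2}(t)g_t-\gamma\|_{H^s}<\varepsilon$; future and past geodesic completeness then follow by bounding the affine parameter of a causal geodesic in the physical metric using the integrability of $1/\cosh(t)$ against the $C^2$-control on $\hat g$.

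Finally, the non-CMC case is reduced to the CMC case by an implicit function theorem: writing a candidate slice as a graph over the initial data slice, the condition of having constant mean curvature becomes a quasilinear elliptic equation for the graph function, whose linearization at the background is a Jacobi-type operator invertible under the smallness assumption. The main obstacle will be the energy estimate itself. Unlike the negative Einstein case, where the background is a local minimum of the relevant reduced functional and coercivity is nearly automatic, here the positive Einstein constant produces potentially destabilizing terms at top order in the evolution of $\hat g$ and $\hat\Si$; the success of the argument hinges on using the two spectral assumptions on $\gamma$ precisely to absorb these terms into the friction supplied by the expansion factor $|\tanh(t)|$.
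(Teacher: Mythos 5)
Your overall architecture (CMCSH gauge, reversed CMC time $\tau=-n\tanh(t)$, rescaling by $\cosh^{-2}$, elliptic control of lapse and shift, a Sobolev energy with a bootstrap, and an implicit-function-theorem reduction of non-CMC data to the CMC case) coincides with the paper's. But the pivotal step as you state it would fail. You aim for a \emph{dissipative} inequality $\frac{d}{dt}\mathcal{E}_s\leq -c|\tanh(t)|\mathcal{E}_s+C\mathcal{E}_s^{3/2}$ for a corrected energy controlling $\|\hat g-\gamma\|_{H^s}^2+\|\hat\Sigma\|_{H^{s-1}}^2$, and you conclude exponential decay of $\mathcal{E}_s$ as $|t|\to\infty$. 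This cannot be correct: for a positive cosmological constant the rescaled metric generically converges to a nearby limit metric, \emph{not} back to $\gamma$ (the asymptotic data at conformal infinity is free, and the paper's own remark after Lemma 5.9 records only convergence to \emph{some} limit in $H^{s-2}$). An inequality forcing $\mathcal{E}_s\to 0$ would contradict this. Structurally, the reason the Andersson--Moncrief correction mechanism breaks down here is that in rescaled time the evolution of $\hat g-\gamma$ carries the prefactor $1/\cosh(t)$, so the metric perturbation freezes rather than decays; no choice of cross-terms produces a negative-definite term proportional to the full energy. What the paper proves instead is the weaker but sufficient estimate $\partial_{t}\mathbf{E}_s\leq \frac{C(\varepsilon)}{\cosh(t)}\mathbf{E}_s$, whose coefficient is merely \emph{integrable} over $\R$; Gronwall then keeps $\mathbf{E}_s$ uniformly small (of order $\delta$) for all time, which is all the theorem asserts. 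The genuinely negative term available, $-2(n-1)\tanh(t)\|\Sigma\|^2_{H^{s-1}}$, controls only the $\Sigma$-part of the energy and is used exclusively to absorb the borderline terms quadratic in $\Sigma$ that appear with the non-integrable prefactor $\tanh(t)$ (and later, one derivative down, to get actual decay of $\Sigma$ and $X$ needed for the completeness argument). You should replace your target inequality accordingly; as written, the bootstrap is anchored to an estimate that is false.

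Two smaller points. First, the hypothesis $-2(n-1)\notin\spectrum(\Delta_\gamma)$ is not about the lapse equation: together with the absence of Killing fields it makes the vector operator $PX=\Delta X+\ric(X)$ an isomorphism, which is what the shift equation and the spatial-harmonic slice require; the lapse operator is $\Delta_g+n$ up to lower-order terms, and its invertibility (needed for the CMC graph construction) comes from Obata's theorem once the round sphere is excluded. Second, in the reduction of non-CMC data you only need a single CMC slice in the development supplied by the rough local existence theory, after which the CMCSH flow takes over; your IFT sketch is consistent with this, but note that the loss from $s'$ to $s$ in the hypotheses is exactly the price of producing that slice with enough regularity.
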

Note that we do not assume that the initial data satisfies the CMC constraints.
In fact, we show that the maximal development of a small but arbitrary perturbation contains a CMC hypersurface so that we can start the CMC Einstein flow with the initial data induced on that hypersurface. During this process, we loose regularity which requires the initial data to lie in a small neighbourhood of higher regularity.

The idea of the proof is based on an energy argument, which makes use of the elliptic hyperbolic structure of the Einstein flow in CMCSH gauge (\emph{constant mean curvature spatial harmonic gauge}) and which is inspired by the ideas of Andersson and Moncrief in \cite{AMo11}. The presence of the positive cosmological constant yields a specific asymptotic hierarchy of the terms appearing in the CMCSH equations, which needs to be taken into account by choosing an appropriate rescaling of the evolving geometry. The eventual energy estimate does not contain a decay inducing negative term on the right hand side as exploited by a correction mechanism in \cite{AMo11}. Here, another crucial observation allows to obtain a sufficient energy estimate in the small data setting. The idea is as follows. The energy is essentially the sum of a geometric Sobolev norm of the difference between the rescaled spatial metric and the Einstein background geometry and that of a geometric Sobolev norm of the trace free part $\Si$ of the rescaled second fundamental form. The Sobolev norms are defined w.r.t.~a Laplace-type operator on tensors corresponding to the background geometry and the perturbed spatial metric. Straightforward energy estimates for this norm contain a negative term on the right hand side, which is a multiple of the Sobolev norm of $\Si$. This term on its own cannot be exploited to gain additional decay for the energy - also a correction mechanism fails in this case. However, a careful analysis of all additional perturbation terms on the right hand side of the energy estimate allows to use this term to absorb all perturbation terms with insufficient decay properties under the assumption that we are in a small data scenario. We obtain a strong energy estimate with an exponentially decaying coefficient under the smallness condition. Global existence and stability then follow by a bootstrap argument. Using boundedness for the highest order of regularity an isolated energy estimate for the second fundamental form in lower regularity is then obtained which yields improved decay properties for this norm. In turn, taking the rescaling into account, the asymptotics of the solution imply the desired completeness. \\
%In addition, we demonstrate that the nonlinear stability result also applies to non-CMC perturbations. In particular, we prove that the maximal globally hyperbolic development of any initial data close to the background contains a CMC surface, to which we can apply our analysis and obtain future completeness and a corresponding CMC foliation of these spacetimes.
\noindent The main theorem which we prove partly contains results of previous papers mentioned above. In \cite{Ri08} Ringstr\"om proves nonlinear stability of the same background solutions in the case of positive curvature in dimensions $n\geq3$ and that of negative curvature in dimension 3. However, for these solutions the existence of CMC foliations has not been proven. A major advantage of our proof is that it works in arbitrary dimensions and does not rely on local coordinates. Its key feature is that it reduces the stability problem to a single energy estimate (given in lemma \ref{lem : en-est}) and thereby identifies the essence of the stability mechanism for those solutions. The method also appears sufficiently robust to generalize the result to the non-vacuum setting.
% The drawback of our method is that we have to avoid Einstein metrics with Killing vector fields which excludes in particular the de Sitter space.
%
%In the case of positive curvature our assumptions imply $n>3$, since we have to exclude the case of the sphere. The reason why our approach fails for the sphere is probably due to the expected non-existence for CMC foliations for certain initial data close to the sphere.

Besides, we construct a one-parameter family of initial data, such that there is a critical parameter value above which the corresponding future and past development recollapses, while for smaller values the developement is future and past complete. This family illustrates the variety of scenarios which might occur despite the presence of a cosmological constant. In particular, the threshold solution between both regimes is a new example for an unstable solution to the Einstein equations.
\subsection{Overview of the paper}
In section \ref{sec : 2} we introduce our notations and recall the equations for the Einstein flow in CMCSH gauge with positive cosmological constant.
% We also recall some facts about the geometry of Riemannian Einstein manifolds and the Einstein operator. 
 In subsection \ref{ssec : 24} we construct homogeneous solutions for the cases of positive and negative spatial Einstein manifolds. In subsection \ref{ssec : 26} we prove the existence of CMC surfaces in the MGHD (maximal globally hyperbolic development) of non-CMC initial data and also the generality of the spatial harmonicity condition. In section \ref{sec : 4} we construct the family of initial data containing both data with expanding and recollapsing asymptotics. Section \ref{sec : 6} contains the proof of the main theorem. In subsection \ref{ssec : 25} we address the problem of local existence for the data we consider. The proof of the main theorem is divided into the analysis of the elliptic system in subsection \ref{ssec : ell}, the main energy estimate in subsection \ref{ssec : enest}, an improved decay estimate in subsection \ref{ssec : imprdec} and the bootstrap argument which establishes global existence in subsection \ref{ssec : proof}.

\subsection*{Acknowledgements}
Parts of this work have been accomplished while the authors enjoyed the hospitality
of the Erwin-Schr\"odinger-Institute in Vienna. We thank Bobby Beig and Piotr Chru\'{s}ciel for helpful comments on the manuscript. D.~F.~thanks the Mathematics Faculty of the University of Regensburg for their hospitality and support during a visit. K.~K.~thanks the SFB 1085, funded by the Deutsche Forschungsgemeinschaft for financial support.

%%%%%%%%%%%%%%%
%%%%%%%%%%%%%%%
%%%%%%%%%%%%%%%

\subsection{Notations and Conventions}\label{sec:nac} We collect all relevant further notations used in this paper in the following.
We define the scalar product of two tensors $u,v$ w.r.t. $\ga$ by
\eq{
\langle u,v\rangle\equiv u_{ij}v_{kl}\ga^{ik}\ga^{jl}.
} 
In addition, we define the mixed $L^2$-scalar product by
\eq{
(u,v)_{L^{2}(g,\ga)}\equiv \int_M \langle u,v\rangle \mu_g.
} 
The corresponding $L^2$-scalar product where also the volume form is taken w.r.t.~$\ga$ is denoted by $(.~,~.)_\ga$.\\
We denote the standard Sobolev spaces on $M$ by $H^s$, where the norm is defined w.r.t.~a fixed metric on $M$. We do not distiguish the notation for Sobolev spaces of different types of tensors. We denote the $H^s$-norm by $\Abk{.}s$. The ball in the $H^s$ topology of radius $\varepsilon>0$ around a tensor $h$ is denoted by $\mcl B^s_{\varepsilon}(h)$. The covariant derivative of a certain metric $g$ is denoted by $\na[g]$. If it is clear from the context, the reference to the metric is suppressed.
The curvature tensor is defined with the sign convention such that
$R_{ijkl}X^k=\nabla_j\nabla_iX_l-\nabla_i\nabla_jX_l$. The Laplacians are defined with the sign convention such that all eigenvalues are nonpositive.

\section{The CMC-Einstein flow}\label{sec : 2}

\subsection{Background solutions}
Throughout the paper, we put the cosmological constant to $\Lambda=\frac{n(n-1)}{2}$ where $n$ is the spatial dimension.
Then Einstein's equations are equivalent to $\mathrm{Ric}_{\tilde g}=n \tilde g$. 
Assume that $(M,\gamma)$ is a compact Riemannian Einstein manifold with negative scalar curvature $R(\gamma)=-n(n-1)$. Then, the metric 
\eq{\label{model1}
\tilde{\gamma}=-dt^2+\sinh^2(t)\gamma
}
solves Einsteins equation on $(0,\infty)\times M$. This solution is future geodesically complete and the mean curvature of the slices $M_t=\left\{t\right\}\times M$ is 
$\tau(t)=-n\frac{\cosh(t)}{\sinh(t)}$
which is strictly monotonically increasing on $(0,\infty)$ and tends from $-\infty$ as $t\to 0$ to $-n$ as $t\to\infty$. Introducing $\tau$ as a new time variable, $\tilde{\gamma}$ is a solution of the CMCSH flow we will introduce in the next section. Similarly, if $\gamma$ is Einstein but of positive scalar curvature $R(\gamma)=n(n-1)$,
\eq{\label{model2}
\tilde{\gamma}=-dt^2+\cosh^2(t)\gamma
}
solves the Einstein equation on $\R \times M$
which is future and past geodesically complete. If $(M,\gamma)$ is the sphere, we recover the de-Sitter metric and therefore these models are called generalized de-Sitter spaces. The mean curvature of $M_t$ is 
$\tau(t)=-n\frac{\sinh(t)}{\cosh(t)}$ which is strictly monotonically decreasing. We have $\lim_{t\to\infty}\tau(t)= -n$ and
$\lim_{t\to-\infty}\tau(t)= n$. The metric $\tilde{\gamma}$ cannot be regarded as a solution of the CMCSH flow, but as a solution of the reversed CMCSH flow if we put the time variable to $-\tau$.
\begin{rem}
If $\gamma$ is a Ricci-flat metric, $\tilde{\gamma}=-dt^2+e^{2t}\gamma$
also solves the Einstein equation. Unfortunately, the mean curvature of any slice $M_t$ is constantly $-n$ and therefore, we cannot handle this solution with our gauge conditions.
\end{rem}

\subsection{ADM Einstein equations}
We consider a space-time of the form $\R\times M$, where $M$ is a smooth compact $n$-dimensional manifold without boundary. For the Lorentzian metric
we choose the ADM-Ansatz 
\eq{
\go {n+1}= -N^2dt\otimes dt+g_{ij}(dx^i+X^i dt)\otimes(dx^j+X^j dt),
}
where $\tilde{g}=(N,X,g)$ denote the lapse function, shift vector field and the spatial metric, respectively. The Einstein equations in CMCSH gauge,
\eq{\alg{
&\mathrm{tr}_g k=: \tau=t\\
&g^{ij}(\Chr kij-\Chrh kij)=: V^k=0,
}}
where $\Chr kij$, $\Chrh kij$ denote the Christoffel symbols w.r.t~$g$ and $\ga$, respectively, with positive cosmological constant $\La=\frac{n(n-1)}{2}$ read 
\eq{\label{CMC}
\alg{
R(g)-\abg{\Si}^2+\tau^2\left(\frac{n-1}{n}\right)&=n(n-1)\\
\na^i\Si_{ij}&=0\\
\p tg_{ij}&=-2N(\Si_{ij}+\tau/ng_{ij})+\mcr{L}_{X}g_{ij}\\
\p t\Si_{ij}&=N(R_{ij}+\tau \Si_{ij}-2\Si_{ik}\Si_j^k+(\tau^2/n-n)g_{ij})\\
&\quad+\mcl L_X\Si_{ij}-\frac1ng_{ij}-\frac{2N\tau}{n}\Si_{ij}-\na_i\na_jN\\
\De N&=-1 + N\Big[\abg{\Si}^2+\frac{\tau^2}{n}-n\Big]\\
\Delta X^i+R^i_{\,m}X^m-\mcl L_XV^i&=2\nabla_jN\Si^{ji}+\tau(2/n-1)\nabla^iN\\
&\quad-(2N\Si^{mn}- (\mcl L_Xg)^{mn})(\Chr imn-\hat{\Gamma}^i_{mn})
}
}
where the second fundamental form $k$ has been decomposed into
\eq{
k=\Si+\frac{\tau}n g,
}
where $\Si$ denotes the tracefree part. Note it is assumed that $N>0$. We have used the following standard notations. $R(g)$ denotes the Ricci scalar curvature of $g$, $\mathcal L_X$ denotes the Lie derivative w.r.t.~the shift, $R_{ij}$ denotes the Ricci tensor of the metric $g$. The Laplacian $\Delta$ is understood to be defined w.r.t.~$g$.\\
Finally, we remark that in the case of an reversed CMC-gauge, $t=-\tau$, which we use for spatial Einstein metrics of positive curvature, one has the lapse equation in the form
\eq{\label{inverseCMC}
\De N=1 + N\Big[\abg{\Si}^2+\frac{\tau^2}{n}-n\Big].
}
The equation for the trace free part of the second fundamental form in this case reads
\eq{\alg{
\p t\Si_{ij}&=N(R_{ij}+\tau \Si_{ij}-2\Si_{il}\Si_j^l+(\tau^2/n-n)g_{ij})\\
&\quad+\mcl L_X\Si_{ij}+\frac1ng_{ij}-\frac{2N\tau}{n}\Si_{ij}-\na_i\na_jN
}}
and the other equations remain the same.
\subsection{The Einstein operator}
The fundamental property of the CMCSH-Einstein flow lies in its elliptic-hyperbolic structure given by the decomposition of the spatial Ricci tensor into the Einstein-operator and perturbation terms as given in the following lemma.
\begin{lem}[{\cite[Lemma 6.2]{AMo11}}]\label{lem-ricc-dec}
Let $\ga$ be an Einstein metric, then we have the expansion
\eq{
R_{ij}-\delta_{ij}-\frac{R(\ga)}{n}g_{ij}=\frac12 \mathcal L_{g,\ga}(g-\ga)_{ij}+J_{ij},
}
where 
\eq{
\mathcal{L}_{g,\ga}h=-\Delta_{g,\ga} h-2\overset{\,\circ}{R}_{\ga}h
}
is the Einstein operator. The Laplacian has the local formula
\eq{\Delta_{g,\gamma}h_{ij}=\frac{1}{\mu_g}\nabla[\gamma]_m(g^{mn}\mu_g\nabla[\gamma]_n h_{ij}),}
the curvature action is given by $\overset{\,\circ}R_\gamma(h)_{ij}=R_{ikjl} (\gamma)h^{kl}$ for some 2-tensor $h$, $\delta_{ij}\equiv\frac12(\na_iV_j+\na_jV_i)$ and $J$ is an error term which satisfies the estimate
\eq{\label{est-J}
\Abk{J}{s-1}\leq C \Abk{g-\ga}s^2.
}
\end{lem}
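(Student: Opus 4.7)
The plan is to expand $R_{ij}(g)$ around the Einstein metric $\gamma$, isolate its linear-in-$(g-\gamma)$ content as the Einstein operator plus the gauge term $\delta_{ij}$, and push all remaining terms into $J$. Setting $h := g-\gamma$ and introducing the difference tensor $T^k_{ij} := \Chr{k}{i}{j}(g) - \Chrh{k}{i}{j}$, the identity $\nabla[\gamma]\gamma = 0$ gives the standard expression
\[
T^k_{ij} = \tfrac{1}{2}g^{kl}\big(\nabla[\gamma]_i h_{jl} + \nabla[\gamma]_j h_{il} - \nabla[\gamma]_l h_{ij}\big),
\]
so that $V^k = g^{ij}T^k_{ij}$ is itself linear in $h$ to leading order. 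A Palatini-type identity then expresses $R_{ij}(g) - R_{ij}(\gamma)$ as the two divergences $\nabla[\gamma]_k T^k_{ij} - \nabla[\gamma]_j T^k_{ik}$ plus quadratic products $T\ast T$.

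Substituting the formula for $T$ into the two divergences and sorting by order in $h$ reveals the principal part. The pure box contribution $-\tfrac12 g^{mn}\nabla[\gamma]_m\nabla[\gamma]_n h_{ij}$ coming from the last summand of $T$ in $\nabla[\gamma]_k T^k_{ij}$ equals $-\tfrac12\Delta_{g,\gamma}h_{ij}$ up to terms of type $(\text{linear in }h)\cdot\nabla[\gamma]h$, arising from $\nabla[\gamma](g^{mn})$ and $\nabla[\gamma]\log\mu_g$, both of which vanish at $g=\gamma$. Commuting the two covariant $\nabla[\gamma]$-derivatives through $h$ by the curvature of $\gamma$ produces contractions $R^{(\gamma)}_{ikjl}h^{kl}$, which assemble into $-\overset{\,\circ}{R}_\gamma(h)_{ij}$, and contractions with $\ric(\gamma)$. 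The remaining divergence-of-$h$ pieces recombine, using $V^k = g^{ij}T^k_{ij}$, into the symmetrized $\nabla[\gamma]$-derivative of $V$, modulo further terms of type $h\ast\nabla[\gamma]h$.

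At this point the Einstein condition $\ric(\gamma) = \frac{R(\gamma)}{n}\gamma$ handles the remaining linear part. The background term $R_{ij}(\gamma) = \frac{R(\gamma)}{n}\gamma_{ij}$ together with the linear $\ric(\gamma)\circ h = \frac{R(\gamma)}{n}h$ contributions produced in the commutator step assemble into $\frac{R(\gamma)}{n}g_{ij}$, matching the term subtracted on the left-hand side of the claim. Replacing $\nabla[\gamma]$ by $\nabla[g]$ in the symmetrized derivative of $V$ so as to form $\delta_{ij}$ as in the statement only costs a $T\ast V$-type correction, which is quadratic in $h$. What survives as the linear-in-$h$ content is then exactly $\tfrac12\mathcal{L}_{g,\gamma}h_{ij}$.

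The collected remainders form $J_{ij}$, whose summands are schematically of the form $h\ast\nabla[\gamma]^2 h$, $\nabla[\gamma]h\ast\nabla[\gamma]h$ or $h\ast h\ast\mathrm{Riem}(\gamma)$, with coefficients that are smooth functions of $g,\gamma$ and their inverses in a small $H^s$-ball around $\gamma$. The bound $\Abk{J}{s-1}\leq C\Abk{g-\gamma}{s}^2$ then follows from the Sobolev multiplication property $H^{s-1}\cdot H^s \hookrightarrow H^{s-1}$ (for $s>n/2+1$) combined with standard Moser-type control of $g^{-1}-\gamma^{-1}$ and $\mu_g^{\pm 1}$ in $H^s$. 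The main obstacle is bookkeeping rather than analytic: one has to verify that \emph{every} linear-in-$h$ term produced by the two divergences lands either in $-\tfrac12\Delta_{g,\gamma}h - \overset{\,\circ}{R}_\gamma h$, in $\delta_{ij}$, or in a $\ric(\gamma)\circ h$ contribution that the Einstein condition recombines with $R_{ij}(\gamma)$ into $\frac{R(\gamma)}{n}g_{ij}$.
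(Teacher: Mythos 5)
The paper offers no proof of this lemma at all: it is quoted verbatim from \cite[Lemma 6.2]{AMo11}, so the only benchmark is the argument in that reference, and your route (difference tensor $T^k_{ij}$, Palatini identity, identification of the principal part with $-\tfrac12\Delta_{g,\gamma}h$ plus the curvature action plus the symmetrized derivative of $V$, Einstein condition to recombine $\ric(\gamma)\circ h$ with $R_{ij}(\gamma)$ into $\frac{R(\gamma)}{n}g_{ij}$) is exactly the standard one used there. The overall structure of your sketch is therefore sound.

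There is, however, one genuine inconsistency in your treatment of the error term. You list $h\ast\nabla[\gamma]^2h$ among the schematic summands of $J$ and then justify \eqref{est-J} by the multiplication $H^{s-1}\cdot H^{s}\hookrightarrow H^{s-1}$. But $h\ast\nabla[\gamma]^2h$ is a product of an $H^{s}$ factor with an $H^{s-2}$ factor, and $H^{s}\cdot H^{s-2}$ only lands in $H^{s-2}$, not $H^{s-1}$; if such a term really survived in $J$, the claimed estimate would fail at this regularity. The resolution — and the actual content of the ``bookkeeping'' you defer — is that \emph{no} term of this form survives: the operator $\Delta_{g,\gamma}$ is deliberately built with the coefficients $g^{mn}$ and the weight $\mu_g$, and $\delta_{ij}$ with the $g$-covariant derivative and $g$-index-lowering, precisely so that every second-derivative-of-$h$ contribution of the Palatini expansion is absorbed exactly into $\tfrac12\mathcal{L}_{g,\gamma}h$ and $\delta_{ij}$, with no $(g^{mn}-\gamma^{mn})\nabla^2h$-type leftovers. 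The remainder $J$ then consists only of $\nabla[\gamma]h\ast\nabla[\gamma]h$ and $h\ast h\ast\mathrm{Riem}(\gamma)$ terms (with coefficients smooth in $g$, $g^{-1}$, $\mu_g^{\pm1}$), and for these your algebra property $H^{s-1}\cdot H^{s-1}\hookrightarrow H^{s-1}$ (valid for $s-1>n/2$) does give \eqref{est-J}. You should either carry out this cancellation explicitly or at least state it as the key structural fact, rather than listing $h\ast\nabla^2h$ as an admissible form of $J$.
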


\subsection{Homogeneous solutions of the CMC-Einstein flow}\label{ssec : 24}
We recover the background solutions discussed before in the present gauge by assuming homogeneity. In the homogeneous setting, meaning a vanishing trace free part of the second fundamental form, a spatially constant lapse function and vanishing shift vector we obtain the following solutions.
\subsubsection*{Standard CMC-gauge}
For the standard CMC-gauge, $t=\tau$,  we deduce from the lapse equation that
\eq{
N=\frac{n}{\tau^2-n^2}
}
and since $N>0$, $\tau^2>n^2$.
Then the evolving physical metric is given by
\eq{
g(\tau)=g(\tau_0)\frac{\tau_0^2-n^2}{\tau^2-n^2},
}
where the metrics have the property that the Ricci tensor is given by $R_{ij}=-\frac{n-1}{nN}g_{ij}$. We recover the metric \eqref{model1} written in CMC time.
%By the equation for $\partial_t k_{ab}$, we have
%\eq{
%\partial_t k_{ab}=N\Big(\frac{R(g)}{n}g_{ab}+\frac{\tau^2}{n}g_{ab}-2\frac{\tau^2}{n^2}g_{ab}-\frac{2}{n-1}\La g_{ab}\Big).
%}
%On the other hand,
%\eq{
%\partial_t k_{ab}=\partial_t(\frac{\tau}{n}g_{ab})=\frac{1}{n}g_{ab}-
%2\frac{\tau^2}{n^2}N g_{ab}.
%}
%From the last two equations, we deduce $(1-n)=N R(g)$. 
In particular, the scalar curvature of the Einstein manifold is negative.\\
\subsubsection*{Reversed CMC-gauge}

\noindent Suppose now, we have the reversed CMC-gauge, i.e.\ $t=-\tau$. %Then,
%\eq{\label{inverseCMC}
%\De N=1 + N\Big[\abg{\Si}^2+\frac{\tau^2}{n}-\frac{2}{n-1}\La\Big]
%}
%because the $1$ comes from $-\partial_t(\tau)$, c.f.\ Rendall, Formula (2.36).
Then
\eq{
N=\frac{n}{n^2-\tau^2}
}
and therefore, $\tau^2<n^2$. The solution for curve of the physical metric is the 
same as above, where
% We have
%\eq{
%\partial_t k_{ab}=N\Big(\frac{R(g)}{n}g_{ab}+\frac{\tau^2}{n}g_{ab}-2\frac{\tau^2}{n^2}g_{ab}-\frac{2}{n-1}\La g_{ab}\Big).
%}
%and
%\eq{
%\partial_t k_{ab}=-\frac{1}{n}g_{ab}-2\frac{\tau^2}{n^2}N g_{ab},
%}
%from which we deduce 
$R_{ij}=\frac{n-1}{nN}g_{ij}$, i.e.\ the scalar curvature is positive. We recover \eqref{model2}. 

%\begin{rem}
%Moreover, one can show that if the spacelike Einstein manifold is Ricci-flat,
%the corresponding solution has constant mean curvature in time.
%\end{rem}

%%%%%%%%%%%%%%%
%%%%%%%%%%%%%%%
%%%%%%%%%%%%%%%

%%%%%%%%%%%%
%%%%%%%%%%%%
%%%%%%%%%%%%

\subsection{Universality of the gauge conditions}\label{ssec : 26}

An important issue arising in the context of the CMCSH gauge concerns the generality of perturbations which can be evolved by the CMCSH Einstein flow. Considering CMC initial data induced by a background solution which admits a CMC foliation one would prefer to consider general perturbations of this initial data, i.e.~solutions to the general constraint equations without the CMC condition. But such non-CMC initial data cannot be evolved by the CMCSH Einstein flow.\\
This problem can be overcome by the following construction. Assuming non-CMC initial data close to the initial data induced by the background solution, which is CMC, general theory assures the existence of a maximal globally hyperbolic development
of this initial data. Under the smallness condition it is possible to show that this development contains a CMC surface. Starting the evolution from this CMC surface one can analyze the geometry of the corresponding future development and eventually treat all perturbations by considering the equations in CMCSH gauge.\\
A similar question arises for the spatial harmonic gauge, which also is shown to apply to general initial data. We discuss these aspects of the gauges in the following. 
\subsubsection{Spatial Harmonicity}
Let $\mathcal{M}$ be the set of smooth Riemannian metrics on $M$.
Fix a metric $\gamma\in \mathcal{M}$ and let $\mathcal{H}$ be the set of metrics $g\in \mathcal M$ such that $\identity:(M,g)\to(M,\gamma)$ is a harmonic map. In other words,
\begin{align}\label{SHmetrics}
\mathcal{H}=\left\{g\in\mathcal{M}\mid V_g^k=g^{ij}(\Gamma[g]_{ij}^k-\Gamma[\gamma]_{ij}^k)=0\right\},
\end{align}
where $\Gamma[g],\Gamma[\gamma]$ are the Christoffel symbols of $g,\gamma$, respectively.
Our aim in this section is to prove that under certain conditions on the background metric $\gamma$, $\mathcal{H}$ is a smooth submanifold of $\mathcal{M}$ and a local slice of the action of the diffeomorphism group through $\gamma$.
By the first variation of the Christoffel symbols (see e.g.\ \cite[Theorem 1.174]{Bes08}), the differential of the map $\Phi:g\mapsto V_g$ is given by
\begin{align}
d\Phi(h)^i=\nabla^jh_j^{\brk i}-\frac{1}{2}\nabla^i\trace_g h, 
\end{align}
where $h$ is a symmetric 2-tensor and $\na$ is the covariant derivative w.r.t.~$g$.
\begin{lem}\label{Piso}Let $(M,\gamma)$ be an Einstein manifold such that
$-2/n\cdot R(\ga)$ is not an eigenvalue of the Laplacian $\Delta_\ga$ and $\gamma$ does not admit Killing vector fields. Then the operator

\begin{align}
P:X^i\mapsto \Delta_\ga X^i+R_j^{i}[\ga]X^j
\end{align}
is an isomorphism which preserves the decomposition
\begin{align}\mathfrak{X}(M)=\left\{\gradient f\mid f\in C^{\infty}(M)\right\}\oplus \left\{ X\in C^{\infty}(TM)\mid \diver X=0\right\}.
\end{align}
\end{lem}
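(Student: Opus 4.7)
The plan is to analyze $P$ via the Bochner–Weitzenböck identity and then split the kernel along the stated decomposition. Identifying vector fields with $1$-forms via $\gamma$, the rough Laplacian $\Delta_\gamma$ and the Hodge–de Rham Laplacian $\Delta_H$ (in the sign convention with nonpositive spectrum) are related on $1$-forms by $\Delta_H\omega = \Delta_\gamma\omega - \mathrm{Ric}(\omega)$. Since $\gamma$ is Einstein, $R^i_j[\gamma]=\frac{R(\gamma)}{n}\delta^i_j$, so
\begin{equation*}
P\,\omega \;=\; \Delta_\gamma\omega + \mathrm{Ric}(\omega) \;=\; \Delta_H\omega + \tfrac{2R(\gamma)}{n}\omega.
\end{equation*}
Because $\Delta_H$ commutes with both $d$ and $\delta$ (as $(d\delta+\delta d)$ does), $P$ commutes with them as well. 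This immediately gives the first claim: if $X=\mathrm{grad}\,f$, then $P(X)^{\flat}=d\bigl((\Delta_\gamma+\frac{2R(\gamma)}{n})f\bigr)$ is again exact; if $\mathrm{div}\,X=0$, then $\mathrm{div}\,P(X)=0$ as well.

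Next I would observe that $P$ is a formally self-adjoint, second-order, strongly elliptic operator on sections of $TM$ over the compact manifold $M$. By standard Fredholm theory it is therefore an isomorphism on the appropriate Sobolev spaces iff it has trivial $L^2$-kernel. Using the decomposition just shown, it suffices to verify $\ker P=0$ on each summand separately.

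On the gradient summand, $P(\mathrm{grad}\,f)=0$ means $(\Delta_\gamma+\frac{2R(\gamma)}{n})f$ is constant. Under the lemma's hypotheses $R(\gamma)\neq 0$, so after adding a constant to $f$ (which does not alter $\mathrm{grad}\,f$) we reduce to $(\Delta_\gamma+\frac{2R(\gamma)}{n})f=0$. A nonzero such $f$ would be an eigenfunction of $\Delta_\gamma$ with eigenvalue $-\frac{2R(\gamma)}{n}$, which is forbidden by assumption. On the divergence-free summand, suppose $P(X)=0$ with $\mathrm{div}\,X=0$. Pairing with $X$ and integrating by parts yields
\begin{equation*}
0 \;=\; \int_M \langle P(X),X\rangle\,\mu_\gamma \;=\; -\int_M |\nabla X|^2\,\mu_\gamma + \int_M \mathrm{Ric}(X,X)\,\mu_\gamma.
\end{equation*}
Combining this with the standard identity
\begin{equation*}
\int_M |\mathcal L_X \gamma|^2\,\mu_\gamma \;=\; 2\!\int_M |\nabla X|^2\,\mu_\gamma + 2\!\int_M (\mathrm{div}\,X)^2\,\mu_\gamma - 2\!\int_M \mathrm{Ric}(X,X)\,\mu_\gamma
\end{equation*}
and $\mathrm{div}\,X=0$ forces $\mathcal L_X\gamma=0$, so $X$ is a Killing field; by assumption $X=0$.

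The main subtle point is the first one: recognizing that the Einstein condition is precisely what makes the zeroth-order term in $P$ a scalar multiple of the identity, so that $P$ descends to the Hodge–de Rham Laplacian plus a constant and therefore preserves the $d$- and $\delta$-kernels. Once this is in place, the kernel analysis is routine and each of the two excluded phenomena (eigenvalue and Killing field) is eliminated by one of the hypotheses, yielding the isomorphism.
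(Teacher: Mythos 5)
Your proof is correct and follows essentially the same route as the paper's: the Weitzenböck identification $P=\Delta_H+\tfrac{2R(\gamma)}{n}$ is the computation the paper performs by commuting covariant derivatives on gradients, your Bochner integral identity on the divergence-free summand is exactly the paper's $L^*L=P$ argument, and both conclude via self-adjoint elliptic Fredholm theory. The only cosmetic difference is that you obtain preservation of the divergence-free factor directly from $[P,\delta]=0$, whereas the paper deduces it from self-adjointness of $P$.
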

\begin{proof} We suppress the dependance on $\ga$ in the following notation.
By a standard argument using commutators of covariant derivatives, we have
\begin{align}
\Delta \nabla^if+R_j^{i}\nabla^jf=\nabla^i\Delta f+2R_j^{i}\nabla^if=\nabla^i\Delta f+\frac{2 R}{n}\cdot \nabla^if
\end{align}
which shows that because of the eigenvalue assumption, $P$ maps the first factor bijectively onto itself. By self-adjointness of $P$, the second factor is also preserved. We define maps $L$ and $L^*$ by 
\begin{align}
L: X\mapsto\frac{1}{2}(\nabla_iX_j+\nabla_jX_i),\qquad L^*:h\mapsto-\nabla^jh_j^{\brk i}.
\end{align}
Note that $L^*$ is the adjoint map of $L$ with respect to the $L^2$-scalar product induced by $\ga$. Now for any vector field $X$ with $\diver X=0$, we have
\begin{align}(L^*L X)^i=
\nabla^j\nabla_jX^i+\nabla^j\nabla^iX_j
 =\Delta X^i+\nabla^j\nabla^iX_j-\nabla^i\nabla^jX_j
 =\Delta X^i+R_j^{i}X^j=(PX)^i.
\end{align}
Thus, $PX=0$ implies $LX=0$. But the kernel of $L$ contains precisely the Killing vector fields, and hence, $X=0$. Therefore, $P$ is injective and by self-adjointness, $P$ is also surjective. 
\end{proof}
\begin{lem}\label{splittinglemma}
Let $(M,\gamma)$ be an Einstein manifold such that
$-2/n\cdot R(\ga)$ is not an eigenvalue of the Laplacian $\Delta_\ga$ and $\gamma$ does not admit Killing vector fields.
Then, $d\Phi_{\gamma}:C^{\infty}(S^2M)\to C^{\infty}(TM)$ is surjective.
Moreover, we have the splitting
\begin{align}\label{splitting}
C^{\infty}(S^2M)=\kernel( d\Phi_{\gamma})\oplus \image(L).
\end{align}
Here, $S^2M$ denotes the bundle of symmetric $2$-tensors.
\end{lem}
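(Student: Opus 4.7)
The plan is to show that $d\Phi_\gamma \circ L$ coincides, up to a nonzero constant, with the operator $P$ from Lemma~\ref{Piso}, which reduces both claims to the isomorphism property of $P$.

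First, I would compute $d\Phi_\gamma(LX)$ for a vector field $X$. Since $(LX)_{ij}=\tfrac{1}{2}(\nabla_iX_j+\nabla_jX_i)$ and $\trace_\gamma(LX)=\diver X$, substitution into the formula $d\Phi(h)^i=\nabla^jh_j^{\brk i}-\tfrac{1}{2}\nabla^i\trace h$ gives
\begin{align*}
d\Phi_\gamma(LX)^i=\tfrac{1}{2}\nabla^j\nabla_jX^i+\tfrac{1}{2}\nabla^j\nabla^iX_j-\tfrac{1}{2}\nabla^i\diver X.
\end{align*}
Commuting covariant derivatives to rewrite $\nabla^j\nabla^iX_j=\nabla^i\diver X+R^i_{\brk j}X^j$, the gradient terms cancel and I obtain $d\Phi_\gamma\circ L=\tfrac{1}{2}P$. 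This is the key identity; the remainder of the proof is essentially formal.

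Next I would deduce surjectivity. By Lemma~\ref{Piso}, $P$ is an isomorphism of $C^\infty(TM)$, so for any $Y\in C^\infty(TM)$ the tensor $h=L(2P^{-1}Y)$ satisfies $d\Phi_\gamma(h)=Y$. For the splitting, given arbitrary $h\in C^\infty(S^2M)$, set $X:=2P^{-1}(d\Phi_\gamma(h))$ and write $h=(h-LX)+LX$. The first summand lies in $\kernel(d\Phi_\gamma)$ by construction of $X$, and the second in $\image(L)$. To see the sum is direct, suppose $LX\in\kernel(d\Phi_\gamma)$; then $PX=0$, so $X=0$ by injectivity of $P$ (which uses precisely the absence of Killing fields invoked in Lemma~\ref{Piso}), and hence $LX=0$.

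The main obstacle is the commutator calculation establishing $d\Phi_\gamma\circ L=\tfrac{1}{2}P$; once this is in hand, everything follows from Lemma~\ref{Piso}. A minor subtlety is that the formula for $d\Phi$ stated above holds in full generality, but one must evaluate it at $g=\gamma$ so that $\nabla=\nabla[\gamma]$ and the curvature-type commutator produces the Ricci term defining $P$, rather than a combination involving $g-\gamma$.
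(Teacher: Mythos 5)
Your proof is correct, and its core is the same as the paper's: the Bochner-type identity relating $d\Phi_\gamma\circ L$ to the operator $P$, followed by an appeal to Lemma \ref{Piso}. (Your factor of $\tfrac12$ is in fact the right one; the paper's displayed computation states $d\Phi_\gamma\circ L=P$, dropping the $\tfrac12$, which is harmless for the argument since it does not affect the isomorphism property.) Where you diverge is in the spanning half of the splitting. The paper first invokes the elliptic decomposition $C^{\infty}(S^2M)=\kernel(d\Phi_{\gamma})\oplus \image(d\Phi_{\gamma}^*)$ and then shows $\image(d\Phi_{\gamma}^*)\subset \kernel(d\Phi_{\gamma})\oplus \image(L)$ by writing $d\Phi_\gamma^*(X)=-LX+\tfrac12\diver(X)\gamma$ and solving an auxiliary equation $PY=-\tfrac12 d\Phi_\gamma(\diver(X)\gamma)$. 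You instead decompose an arbitrary $h$ directly as $h=(h-LX)+LX$ with $X=2P^{-1}(d\Phi_\gamma h)$, which is shorter, avoids any appeal to the adjoint splitting, and makes the projection onto $\image(L)$ explicit; the only extra input is that $P^{-1}$ preserves smoothness, which is part of Lemma \ref{Piso} (elliptic regularity). Both arguments establish directness identically, via injectivity of $P$ and hence the absence of Killing fields.
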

\begin{proof}
To prove the first assertion, we consider a vector field $X$ and compute
\begin{align}\label{bochner}
 d\Phi_{\gamma}\circ L(X)^i=\frac{1}{2}(\nabla^j\nabla_jX^i+\nabla^j
 \nabla^iX_j)-\frac{1}{2}\nabla^i\nabla_j X^j=\Delta X^i+R^i_j X^j=(PX)^i.
 \end{align}
Due to Lemma \ref{Piso}, $P$ is an isomorphism, so $d\Phi_{\gamma}$ is surjective even when restricted to Lie derivatives. This calculation also shows that $\kernel( d\Phi_{\gamma})\cap \image(L)=0$.
To prove that the direct sum spans all of $C^{\infty}(S^2M)$, we first note that
\begin{align}
\kernel( d\Phi_{\gamma})\oplus \image(d\Phi_{\gamma}^*)=C^{\infty}(S^2M)
\end{align}
so it suffices to prove that
\begin{align}
 \image(d\Phi_{\gamma}^*)\subset \kernel( d\Phi_{\gamma})\oplus \image(L).
\end{align}
If $h\in\image(d\Phi_{\gamma}^*)$, there is a vector field $X$ such that
\begin{align}
h=d\Phi_{\gamma}^*(X)=-LX+\frac{1}{2}\diver(X)\cdot\gamma.
\end{align}
We may use an arbitrary vector $Y$ field to rewrite this expression as
\begin{align}
h=-L(X+Y)+LY+\frac{1}{2}\diver(X)\cdot\gamma
\end{align}
We are done with the proof if we can find $Y$ such that
\begin{align}
LY+\frac{1}{2}\diver(X)\cdot\gamma\in  \kernel( d\Phi_{\gamma}).
\end{align}
Thus, we have to solve
\begin{align}
0= d\Phi_{\gamma}(LY+\frac{1}{2}\diver(X)\cdot\gamma)=PY+\frac{1}{2}d\Phi_{\gamma}(\diver(X)\cdot\gamma)
\end{align}
where we used \eqref{bochner}. This can be done due to Lemma \ref{Piso}. 
\end{proof}
\noindent For our purposes, it is more convenient to work on neighbourhoods with Sobolev regularity. We therefore use $H^s$-norms with $s>\frac{n}{2}+1$ for the following theorem. We remark that the above lemmas also hold, if we descend to $H^s$-regularity. Let $\mathcal{M}^s$ be the space of $H^s$-metrics on $M$ and let $\mathcal{H}^s$ be the set of all $g\in \mathcal{M}^s$ satisfying the condition in \eqref{SHmetrics}. 
\begin{thm}\label{slicethm}
Let $(M,\gamma)$ be an Einstein manifold such that
$-2/n\cdot R(\ga)$ is not an eigenvalue of the Laplacian $\Delta_\ga$ and $\gamma$ does not admit Killing vector fields. Then in a small $H^s$-neighbourhood $\mathcal{U}\subset \mathcal{M}^s$ of $\gamma$, $\mathcal{H}^s$ is a smooth submanifold of $\mathcal{M}^s$ with tangent space
\begin{align}
T_{\gamma}\mathcal{H}^s=\left\{h\in H^s(S^2M)\mid
d\Phi(h)^i=\nabla^j h_j^i-\frac{1}{2}\nabla^i\trace h =0\right\}.
\end{align}
Moreover, for any $g\in\mathcal{U}$ there exists an isometric metric $\tilde{g}\in\mathcal{H}^s$ which is $H^s$-close to $\gamma$, i.e.\ there exists $\varphi\in H^s(\Diff(M))$ such that $g=\varphi^*\tilde{g}$.
\end{thm}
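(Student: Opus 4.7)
The plan is to apply the implicit function theorem twice: first to equip $\mathcal{H}^s$ with its submanifold structure, and then to produce the diffeomorphism realizing the slice property.

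For the submanifold structure, I would view $\Phi\colon g\mapsto V_g$ as a smooth map $\mathcal{M}^s\to H^{s-1}(TM)$. Smoothness is automatic once $s>n/2+1$, since $H^s$ is then a Banach algebra and $\Phi$ is built algebraically from $g$, $g^{-1}$ and first derivatives of $g$. The Sobolev version of Lemma \ref{splittinglemma} (noted in the remark preceding the theorem) furnishes the topological direct sum $H^s(S^2M)=\kernel(d\Phi_\gamma)\oplus L(H^{s+1}(TM))$, and Lemma \ref{Piso} together with elliptic regularity upgrades $P\colon H^{s+1}(TM)\to H^{s-1}(TM)$ to a Banach-space isomorphism. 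Because $d\Phi_\gamma\circ L=P$, the composition $L\circ P^{-1}$ is a bounded right inverse of $d\Phi_\gamma$, so its kernel is topologically complemented. The implicit function theorem then yields that $\mathcal{H}^s=\Phi^{-1}(0)$ is a smooth embedded submanifold of $\mathcal{M}^s$ in a neighbourhood of $\gamma$, with tangent space $\kernel(d\Phi_\gamma)$ at $\gamma$.

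For the slice property I would parametrize diffeomorphisms near the identity by vector fields through an exponential-type chart $X\mapsto\varphi_X$, $X\in H^{s+1}(TM)$, with $\varphi_0=\identity$ and $\partial_t\varphi_{tX}|_{t=0}=X$, and consider
\[
F\colon \mathcal{M}^s\times H^{s+1}(TM)\to H^{s-1}(TM),\qquad F(g,X)=V_{\varphi_X^*g}.
\]
One has $F(\gamma,0)=0$, and
\[
\partial_X F(\gamma,0)(Y)=d\Phi_\gamma(\mathcal{L}_Y\gamma)=2\,d\Phi_\gamma(L(Y))=2\,P(Y),
\]
which is an isomorphism by Lemma \ref{Piso}. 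A second application of the implicit function theorem produces a smooth map $g\mapsto X(g)$ on a neighbourhood $\mathcal{U}$ of $\gamma$ with $F(g,X(g))=0$, so $\tilde g:=\varphi_{X(g)}^*g$ belongs to $\mathcal{H}^s$ and is $H^s$-close to $\gamma$ because $X(\gamma)=0$. Setting $\varphi:=\varphi_{X(g)}^{-1}$ gives $g=\varphi^*\tilde g$ as required.

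The principal obstacle is the Sobolev bookkeeping for the diffeomorphism action: $\varphi_X^*g$ is only $H^s$ when $g\in H^s$ and $X\in H^{s+1}(TM)$, so $F$ must be set up between carefully chosen Banach manifolds, and the composition and product estimates that make $F$ smooth rely on $s>n/2+1$. Once this functional-analytic framework is in place, the proof reduces to two standard applications of the implicit function theorem together with the ellipticity already encoded in Lemmas \ref{Piso} and \ref{splittinglemma}.
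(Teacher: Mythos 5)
Your proposal is correct and rests on the same two pillars as the paper's proof: the splitting $H^s(S^2M)=\kernel(d\Phi_\gamma)\oplus\image(L)$ from Lemma \ref{splittinglemma} for the submanifold structure, and the identity $d\Phi_\gamma\circ L=P$ together with the isomorphism property of $P$ from Lemma \ref{Piso} for the slice property. The one genuine difference lies in how the second assertion is organized. The paper applies the inverse function theorem to the joint action map $\Psi(g,\varphi)=\varphi^*g$ on $\mathcal{H}^s\times H^s(\Diff(M))$, checks that $d\Psi_{(\gamma,\identity)}$ is bijective via the splitting, and reads off $\tilde g$ and $\varphi$ simultaneously from the resulting local product decomposition of $\mathcal{M}^s$ near $\gamma$. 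You instead fix $g$ and solve the gauge-fixing equation $F(g,X)=V_{\varphi_X^*g}=0$ for the generating vector field, using that $\partial_XF(\gamma,0)=2P$ is an isomorphism. Your route needs only Lemma \ref{Piso} (not the full splitting) for the second assertion, and your Sobolev bookkeeping is actually the more careful one: taking $X\in H^{s+1}(TM)$ so that $\varphi_X^*g$ remains in $H^s$ is the right index count, whereas the paper's $H^s(\Diff(M))$ nominally loses a derivative under pullback. What your version gives up is the local product (uniqueness/triviality) structure that the paper's $\Psi$ provides, but that is not claimed in the theorem. Both arguments share the same residual technical point, namely that the pullback action in finite Sobolev regularity must be verified to be $C^1$ (not merely continuous) before the implicit function theorem applies; you at least flag this explicitly.
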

\begin{proof}
The first assertion follows from the first assertion of Lemma \ref{splittinglemma}.
 The second assertion follows from the implicit function theorem for Banach manifolds applied to the map
 \begin{align}\Psi:\mathcal{H}^s\times  H^s(\Diff(M))\to \mathcal{M}^s
 \end{align}
 given by $\Psi(g,\varphi)=\varphi^*g$.
Since there are no Killing fields, $d\Psi_{(\gamma,\identity)}$ is injective and its image is
\begin{align}
\image(d\Psi_{(\gamma,\identity)})=T_{\gamma}\mathcal{H}^s\oplus\left\{\mathcal{L}_X\gamma\mid X\in H^s(TM)\right\}=\kernel( d\Phi_{\gamma})\oplus\image(L)
\end{align} 
which equals $H^s(S^2M)=T_{\gamma}\mathcal{M}^s$ by \eqref{splitting}.
Therefore, $\Psi$ is a diffeomorphism from a $H^s$-neighbourhood of $(\gamma,\identity)$ in $\mathcal{H}^s\times  H^s(\Diff(M))$ to a $H^s$-neighbourhood of $\gamma$ in $\mathcal{M}^s$.
\end{proof}
\begin{rem}
The assertions of Theorem \ref{slicethm} hold for any Riemannian metric $\gamma$ where the operator $P$ is an isomorphism.
\end{rem}

\subsubsection{Constant mean curvature hypersurfaces}
Let us now consider the CMC-gauge. Let $M$ be a compact manifold, $I\subset\R$ an open and bounded interval and $\widetilde{\mathcal{M}}^{k,\alpha}$ the set of $C^{k,\alpha}$-Lorentz metrics on $I\times M$, such that the induced metrics on the hypersurfaces $M_t=\left\{t\right\}\times M$ are all Riemannian. A Banach manifold structure on this set is induced by the norm
\begin{align}
\left\|\tilde{g}\right\|_{C^{k,\alpha}}=\left\|N^2\right\|_{C^{k,\alpha}}+\left\|X\right\|_{C^{k,\alpha}}+\left\|g_t\right\|_{C^{k,\alpha}},
\end{align}
where we identify $\tilde{g}$ according to the foliation by submanifolds $M_t$ with the triple $(N,X,g_t)$ of the Lapse function, the shift vector and the induced metrics $g_t=\tilde{g}|_{M_t}$. The norms of the right hand side are taken with respect to the Riemannian metric $dt^2+\gamma$ on $I\times M$. 
Let $C^{k,\alpha}(M,I)$ the set of $C^{k,\alpha}$-functions $f:M\to I$ endowed with the natural Banach manifold structure. Each such function defines naturally an embedding $\imath_f:M\to I\times M$ by $\imath_f(x)=(f(x),x)$. We define a map
\begin{align}
H:\widetilde{\mathcal{M}}^{k,\alpha}\times C^{k+1,\alpha}(M,I)\supset\mathcal{D}\to C^{k-1,\alpha}(M)
\end{align}
which associates to each pair $(\tilde{g},f)$ the mean curvature along the embedding $\imath_f:M\to I\times M$ induced by the metric $\tilde{g}$. 
Here, $\mathcal{D}$ is the open subset of pairs $(\tilde{g},f)$ such that $(\imath_f)^*\tilde{g}$  is a Riemannian metric.

This is a smooth map between Banach manifolds. To see this, it suffices to consider a local expression of this map:
Using local coordinates on $M$, we see that $T_{(f(x),x)}\image (\imath_f)$ is spanned by the vectors
$d \imath_f(\partial_i)=\partial_i f\cdot\partial_t+\partial_i$. Let $F(t,x)=t-f(x)$ and the matrix $(g_{f})_{ ij}$ be defined by $(g_f)_{ij}=\tilde{g}(d \imath_f(\partial_i),d \imath_f(\partial_j))$.
Then the mean curvature is
\eq{\alg{
H_{\tilde{g},f}=(g_f)^{ij}\tilde{g}(\tilde{\nabla}_{d \imath_f(\partial_i)}\nu,d \imath_f(\partial_j)),\qquad \nu=\frac{\gradient_{\tilde{g}} F}{|\gradient_{\tilde{g}} F|_{\tilde{g}}}
}}
where $\nu$ is the timelike unit normal and $(g_f)^{ij}$ is the inverse of $(g_f)_{ij}$. This expression contains second derivatives of the function $f$ and first derivatives of $\tilde{g}$.
We use an implicit function theorem applied to the map $H$ to prove the following lemma.
\begin{lem}
Let $(M,\gamma)$ be a compact Einstein manifold with scalar curvature $R(\ga)=-n(n-1)$
and let $I$ be an arbitrary open and bounded interval around $0$.
Let $\ell\geq 1$ and consider the metric
\begin{align*}
\tilde{\gamma}=-dt^2+\cosh^2(t)\gamma
\end{align*}
whose initial data induced on the hypersurface $\left\{0\right\}\times M$ is $(\gamma,0)$.
Then for any $C^{\ell,\alpha}\times C^{\ell-1,\alpha}$-neighbourhood $\mathcal{U}$ of $(\gamma,0)$, there exists a neighbourhood $\mathcal{V}\subset \widetilde{\mathcal{M}}^{\ell,\alpha}$ of $\tilde{\gamma}$
such that any $\tilde{g}\in\mathcal{V}$ admits a hypersurface such that the pair $(g,k)$ of the metric and the second fundamental form induced on this hypersurface is in $\mathcal{U}$ and $\trace_gk\equiv0$.
\end{lem}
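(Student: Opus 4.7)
The plan is to invoke the implicit function theorem between Banach manifolds for the smooth map $H$ at the reference point $(\tilde\gamma, 0)$. First note that $(\tilde\gamma, 0) \in \mathcal{D}$: the induced metric at $f\equiv 0$ is $\gamma$, which is Riemannian. Since $\partial_t(\cosh^2(t)\gamma)|_{t=0} = 0$, the second fundamental form of $\{0\}\times M$ in $(I\times M,\tilde\gamma)$ vanishes, so in particular the mean curvature is zero and $H(\tilde\gamma, 0) = 0$.

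The crux is to verify that the partial derivative in $f$,
\begin{align*}
D_f H|_{(\tilde\gamma, 0)} : C^{\ell+1,\alpha}(M) \to C^{\ell-1,\alpha}(M),
\end{align*}
is a topological isomorphism. By the standard Jacobi-type formula for the variation of the mean curvature under a normal deformation $\dot f\, \nu$ of a spacelike hypersurface,
\begin{align*}
D_f H|_{(\tilde\gamma, 0)}(\dot f) = \Delta_\gamma \dot f + \bigl(|k|^2 + \mathrm{Ric}_{\tilde\gamma}(\nu,\nu)\bigr)\dot f,
\end{align*}
which can be cross-checked at the background against the explicit profile $\tau(t) = -n\tanh(t)$ (giving $\partial_t\tau|_{t=0} = -n$ for $\dot f \equiv 1$). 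At $t=0$ we have $|k|^2 = 0$ and, by the warped-product identity, $\mathrm{Ric}_{\tilde\gamma}(\partial_t,\partial_t)|_{t=0} = -n\cdot \cosh''(0)/\cosh(0) = -n$, so the operator reduces to $\Delta_\gamma - n\cdot \mathrm{id}$. Since $-\Delta_\gamma \geq 0$ on the compact manifold $M$, the spectrum of $\Delta_\gamma - n$ lies in $(-\infty, -n]$, so it is a self-adjoint elliptic isomorphism between the stated H\"older spaces by classical Schauder theory.

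The IFT then supplies an open neighbourhood $\mathcal{V} \subset \widetilde{\mathcal{M}}^{\ell,\alpha}$ of $\tilde\gamma$ and a smooth map $\Phi : \mathcal{V} \to C^{\ell+1,\alpha}(M,I)$ with $\Phi(\tilde\gamma) = 0$ and $H(\tilde g, \Phi(\tilde g)) \equiv 0$. Hence for every $\tilde g \in \mathcal{V}$ the embedded submanifold $\imath_{\Phi(\tilde g)}(M) \subset I \times M$ has vanishing mean curvature with respect to $\tilde g$, i.e.\ $\mathrm{tr}_g k \equiv 0$ on it. The induced metric and second fundamental form depend continuously (in fact smoothly) on $(\tilde g, \Phi(\tilde g))$, and they equal $(\gamma, 0)$ at the reference point, so after shrinking $\mathcal{V}$ the pair $(g,k)$ lies in the prescribed neighbourhood $\mathcal{U}$. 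The main technical obstacle is exactly the invertibility step: fixing the sign conventions appropriate to a Lorentzian ambient, verifying the two-derivative loss from $C^{\ell+1,\alpha}$ to $C^{\ell-1,\alpha}$ is correctly absorbed, and establishing triviality of the kernel of $\Delta_\gamma - n$, which here is a direct consequence of the nonnegativity of $-\Delta_\gamma$ on the compact base $M$.
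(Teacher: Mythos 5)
Your overall strategy (implicit function theorem for the mean curvature map $H$ at $(\tilde\gamma,0)$, reduction to invertibility of the linearization in $f$) is exactly the paper's, but your computation of that linearization has a sign error with real consequences. You use the Riemannian Jacobi-operator formula $\Delta_\gamma\dot f+(|k|^2+\mathrm{Ric}_{\tilde\gamma}(\nu,\nu))\dot f$; for a \emph{spacelike} hypersurface with \emph{timelike} unit normal the zeroth-order term enters with the opposite sign, $\Delta_\gamma\dot f-(|k|^2+\mathrm{Ric}_{\tilde\gamma}(\nu,\nu))\dot f$, which with $|k|^2=0$ and $\mathrm{Ric}_{\tilde\gamma}(\partial_t,\partial_t)=n\,\tilde\gamma(\partial_t,\partial_t)=-n$ gives (up to normalization) $\Delta_\gamma+n$, not $\Delta_\gamma-n$. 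Your cross-check on constants cannot detect this, since it only tests the zeroth-order coefficient against a sign-convention-dependent value of $\tau'(0)$; the convention-independent test is the kernel: for de Sitter space the boosts of the totally geodesic slice $\{t=0\}\subset(-dt^2+\cosh^2(t)g_{S^n})$ produce a family of maximal graphs whose infinitesimal generators are first spherical harmonics $\phi$ with $\Delta\phi=-n\phi$, so the linearization must annihilate them — consistent with $\Delta+n$ and inconsistent with $\Delta-n$.

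This is not a cosmetic issue, because your conclusion that invertibility is "a direct consequence of the nonnegativity of $-\Delta_\gamma$" is then false. For the correct operator $\Delta_\gamma+n$, invertibility is equivalent to $-n\notin\mathrm{spec}(\Delta_\gamma)$, which is where the paper invokes the Lichnerowicz–Obata theorem: on a compact Einstein manifold with $\mathrm{Ric}_\gamma=(n-1)\gamma$ the first nonzero eigenvalue of $-\Delta_\gamma$ is at least $n$, with equality exactly for the round sphere. Hence the lemma genuinely requires excluding the sphere (as the paper's proof and its introduction emphasize), and your argument would "prove" the statement for de Sitter space, where the IFT in fact fails at $(\tilde\gamma,0)$ because the kernel is nontrivial. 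To repair the proof, fix the Lorentzian sign in the variation formula and replace the triviality claim by the Obata estimate under the standing hypothesis that $(M,\gamma)$ is not the round sphere. (A side remark: the lemma's stated hypothesis $R(\gamma)=-n(n-1)$ is a typo in the paper — the $\cosh$ model requires positive scalar curvature, as you implicitly and correctly assumed.)
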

\begin{proof}
Consider the map $H$ of above and note that $H(\tilde{\gamma},0)=0$.
We compute its differential  at the tupel $(\tilde{\gamma},0)$ restricted to the second argument. By the variational formula of the mean curvature in
\cite[Proposition 2.2]{BBC08},
\begin{align}
dH_{\tilde{\gamma},0}(0,w)=\frac{1}{n}[\Delta_\ga w-(\ric_{\tilde{\gamma}}(\partial_t,\partial_t)+|k|^2_{\gamma})w]
=\frac{1}{n}[\Delta_\ga w+n w].
\end{align}
Because we excluded the case of the sphere, the operator $\Delta_\ga+n:C^{\ell+1,\alpha}(M)\to C^{\ell-1,\alpha}(M)$ is an isomorphism \cite[Theorem 1 and Theorem 2]{Ob62}. Due to the implicit function theorem for Banach manifolds, we have neighbourhoods $\mathcal{U}'\subset C^{\ell,\alpha}(\widetilde{\mathcal{M}})$ of $\tilde{\gamma}$, $\mathcal{V}'\subset C^{\ell+1,\alpha}(M,I)$ of $0$ and a smooth function $F:\mathcal{U}'\to \mathcal{V}'$ such that $H(\tilde{g}, F(\tilde{g}))=0$, i.e. $F$ associates to each metric $\tilde{g}$ a minimal Riemannian hypersurface given by the graph of the function $ F(\tilde{g})$. Moreover, 
$F(\tilde{g})$ is the only function in $\mathcal{V}'$ such that $H(\tilde{g}, F(\tilde{g}))=0$. The proof is finished by the remark that the map $\tilde{g}\to (g,k)$ associating to $\tilde{g}$ the metric and the second fundamental form of $\mathrm{graph}(F(\tilde{g}))$ is continuous from $\mathcal{D}$ to $C^{\ell,\alpha}\times C^{\ell-1,\alpha}$.
\end{proof}
\begin{lem}
Let $(M,\gamma)$ be an Einstein manifold with scalar curvature $R(\ga)=-n(n-1)$
and let $I$ be an arbitrary open and bounded interval in $(0,\infty)$ around $t_0=\arcsinh(1)$.
Let $\ell\geq 1$ and consider the  metric
\begin{align*}
\tilde{\gamma}=-dt^2+\sinh^2(t)\gamma
\end{align*}
whose initial data induced on the hypersurface $\left\{\arcsinh(1)\right\}\times M$ is $(\gamma,-\sqrt{2}\gamma)$.
Then for any $C^{\ell,\alpha}\times C^{\ell-1,\alpha}$-neighbourhood $\mathcal{U}$ of $(\gamma,-\sqrt{2}\gamma)$, there exists a neighbourhood $\mathcal{V}\subset C^{\ell,\alpha}(\widetilde{\mathcal{M}})$ of $\tilde{\gamma}$
such that any $\tilde{g}\in\mathcal{V}$ admits a hypersurface such that the pair $(g,k)$ of the metric and the second fundamental form induced on this hypersurface is in $\mathcal{U}$ and $\trace_gk\equiv-\sqrt{2}n$.
\end{lem}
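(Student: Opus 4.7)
The strategy mirrors the proof of the preceding lemma, with the reference point moved from $(\tilde{\gamma},0)$ to $(\tilde{\gamma},t_0)$ where $t_0=\arcsinh(1)$, and the target mean curvature shifted from $0$ to $-\sqrt{2}n$. I set up exactly the same smooth map
\[
H:\widetilde{\mathcal{M}}^{\ell,\alpha}\times C^{\ell+1,\alpha}(M,I)\supset \mathcal{D}\to C^{\ell-1,\alpha}(M)
\]
sending $(\tilde{g},f)$ to the (normalized) mean curvature of the graph $\imath_f(M)$ with respect to $\tilde{g}$, and aim to solve $H(\tilde{g},F(\tilde{g}))\equiv -\sqrt{2}$ by the implicit function theorem for Banach manifolds. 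At the reference point $\sinh(t_0)=1$ and $\cosh(t_0)=\sqrt{2}$, so the induced metric on $\{t_0\}\times M$ is $\gamma$ and the induced second fundamental form is $-\sqrt{2}\gamma$; in particular $H(\tilde{\gamma},t_0)=-\sqrt{2}$.

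The main computation is the linearization of $H$ in the second argument at $(\tilde{\gamma},t_0)$. Applying the variational formula of the mean curvature in \cite[Proposition 2.2]{BBC08}, together with $\ric_{\tilde{\gamma}}(\partial_t,\partial_t)=-n$ (from $\ric_{\tilde{\gamma}}=n\tilde{\gamma}$) and $|k|^2_\gamma=2n$ at $t_0$ (since $k=-\sqrt{2}\gamma$), I obtain
\begin{align*}
dH_{(\tilde{\gamma},t_0)}(0,w)
&=\tfrac{1}{n}\bigl[\Delta_\gamma w-(\ric_{\tilde{\gamma}}(\partial_t,\partial_t)+|k|^2_\gamma)w\bigr]\\
&=\tfrac{1}{n}\bigl[\Delta_\gamma w-n\,w\bigr].
\end{align*}
Because $\gamma$ has negative scalar curvature, $\Delta_\gamma$ is a non-positive self-adjoint elliptic operator, so the spectrum of $\Delta_\gamma-n$ is contained in $(-\infty,-n]$; consequently $\Delta_\gamma-n:C^{\ell+1,\alpha}(M)\to C^{\ell-1,\alpha}(M)$ is an isomorphism. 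In contrast to the positive-curvature case of the preceding lemma, no Obata-type exclusion on $\gamma$ is required here, because the new zeroth-order contribution $|k|^2_\gamma=2n$ automatically shifts the operator into the resolvent set.

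With the linearization invertible, the implicit function theorem produces neighborhoods $\mathcal{V}'\subset \widetilde{\mathcal{M}}^{\ell,\alpha}$ of $\tilde{\gamma}$ and $\mathcal{W}'\subset C^{\ell+1,\alpha}(M,I)$ of $t_0$ and a smooth map $F:\mathcal{V}'\to \mathcal{W}'$ with $F(\tilde{\gamma})\equiv t_0$ and $H(\tilde{g},F(\tilde{g}))\equiv -\sqrt{2}$. Equivalently, the graph of $F(\tilde{g})$ is a spacelike hypersurface whose induced data $(g,k)$ satisfies $\trace_g k\equiv -\sqrt{2}n$. Since the assignment $\tilde{g}\mapsto (g,k)$ extracting first and second fundamental form on this graph is continuous from $\mathcal{D}$ into $C^{\ell,\alpha}\times C^{\ell-1,\alpha}$ and maps $\tilde{\gamma}$ to $(\gamma,-\sqrt{2}\gamma)$, shrinking to a smaller neighborhood $\mathcal{V}\subset \mathcal{V}'$ ensures $(g,k)\in\mathcal{U}$ for every $\tilde{g}\in\mathcal{V}$. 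The only substantive step is the invertibility of the linearization, and as indicated this is automatic from the negative scalar curvature of $\gamma$ combined with the nonvanishing of $|k|^2_\gamma$ at $t_0$; the rest is a direct transcription of the previous proof.
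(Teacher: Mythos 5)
Your proposal is correct and follows essentially the same route as the paper: the same implicit-function-theorem setup as in the preceding lemma, the same linearization $dH_{(\tilde{\gamma},t_0)}(0,w)=\tfrac{1}{n}(\Delta_\gamma w-nw)$ obtained from $\ric_{\tilde{\gamma}}(\partial_t,\partial_t)=-n$ and $|k|^2_\gamma=2n$, and the observation that this operator is always invertible (the paper merely phrases the target as $\bar{H}=H+\sqrt{2}n=0$ rather than $H=-\sqrt{2}$). One cosmetic remark: the non-positivity of $\Delta_\gamma$ is just the sign convention and holds on any compact manifold, so the invertibility of $\Delta_\gamma-n$ does not actually rely on the negative scalar curvature of $\gamma$; what matters, as you correctly note, is that the zeroth-order term has the favorable sign here, unlike in the positive-curvature case.
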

\begin{proof}
The proof is analogous as above. In this case, we consider the map
$\bar{H}=H+\sqrt{2}n$. We have $\bar{H}(\tilde{\gamma},t_0)=0$ and we compute
\begin{align}
d\bar{H}_{\tilde{\gamma},t_0}(0,w)=\frac{1}{n}[\Delta_\ga w-(\ric_{\tilde{\gamma}}(\partial_t,\partial_t)+|k|^2_{\gamma})w]
=\frac{1}{n}[\Delta_\ga w -n w].
\end{align}
which is always an isomorphism from $C^{\ell+1,\alpha}(M)$ to $C^{\ell-1,\alpha}(M)$.
\end{proof}
\begin{thm}\label{CMCthm}
Let $\gamma$ be an Einstein manifold of scalar curvature $R(\ga)=n(n-1)$ (resp. $R(\ga)=-n(n-1)$) and let $s>n/2+1$, $s'>n/2+s$. Then for any $H^s\times H^{s-1}$-neighbourhood $\mathcal{U}\ni(\gamma,0)$ (resp. $\mathcal{U}\ni(\gamma,-\sqrt{2}\gamma)$) of CMC initial data sets, there exists a $H^{s'}\times H^{s'-1}$-neighbourhood $\mathcal{V}\ni(\gamma,0)$ (resp. $\mathcal{U}\ni(\gamma,-\sqrt{2}\gamma)$) of general initial data sets such that any development of initial data in $\mathcal{V}$ admits a CMC-hypersurface such that the initial data induced on the hypersurface lies in $\mathcal{U}$.
\end{thm}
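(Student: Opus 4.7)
The plan is to combine the two preceding lemmas, which each produce a CMC hypersurface inside a $C^{\ell,\alpha}$-neighbourhood of the respective background model $\tilde{\gamma}$, with standard local well-posedness of the vacuum Einstein equations with cosmological constant at Sobolev regularity, plus Sobolev embedding, in order to upgrade an $H^{s'}\times H^{s'-1}$-smallness hypothesis on the initial data to $C^{\ell,\alpha}$-smallness of the resulting spacetime metric.

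First, given initial data $(g_0,k_0)$ in a small $H^{s'}\times H^{s'-1}$-ball around the CMC background data (which is $(\gamma,0)$ in the positive case, treated at $t_0=0$, and $(\gamma,-\sqrt{2}\gamma)$ in the negative case, treated at $t_0=\arcsinh(1)$), I would invoke classical local well-posedness of the Einstein equations in wave (harmonic) gauge. Because $s'>n/2+2$, the initial data lie in the admissible Sobolev regularity for the reduced quasilinear wave system, so one obtains a globally hyperbolic development $(I\times M,\tilde{g})$ over an open interval $I\ni t_0$. The background data reproduce exactly the model $\tilde{\gamma}$, and continuous dependence of the solution map on the initial data in the Sobolev topology shows that $\tilde g$ is as close to $\tilde\gamma$ as desired in $H^{s'}(I\times M)$ provided the initial perturbation is sufficiently small.

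Second, since $s'>n/2+s$ and $s>n/2+1$, one can fix $\ell\geq 1$ and $\alpha\in(0,1)$ such that the chain of embeddings
\begin{align*}
H^{s'}\hookrightarrow C^{\ell,\alpha}, \qquad C^{\ell,\alpha}\hookrightarrow H^{s}, \qquad C^{\ell-1,\alpha}\hookrightarrow H^{s-1}
\end{align*}
all hold (the first on $I\times M$, the others on the compact hypersurface). The spacetime metric $\tilde g$ therefore lies in a prescribed small $C^{\ell,\alpha}$-neighbourhood of $\tilde\gamma$ inside $\widetilde{\mathcal{M}}^{\ell,\alpha}$. I would then apply the appropriate one of the two preceding lemmas: it produces, as the graph of a function $F(\tilde g)$, a hypersurface $\Sigma\subset I\times M$ on which $\trace_g k$ equals the prescribed constant ($0$ in the positive case, $-\sqrt{2}n$ in the negative case), with induced pair $(g,k)$ in a prescribed $C^{\ell,\alpha}\times C^{\ell-1,\alpha}$-neighbourhood of the CMC background. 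The final Sobolev embedding then places $(g,k)$ in the given $H^s\times H^{s-1}$-neighbourhood $\mathcal{U}$, completing the argument.

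The main obstacle is the first step: making explicit that the Einstein flow, starting from non-CMC data, admits a well-posedness statement which is continuous as a map $H^{s'}\times H^{s'-1}\to \widetilde{\mathcal M}^{\ell,\alpha}(I\times M)$ on a uniform interval $I$ containing $t_0$. This is handled by the standard wave-gauge reduction, Choquet-Bruhat-Geroch type existence for quasilinear wave systems at the regularity $s'>n/2+2$, and a uniform lower bound on the existence interval for small perturbations of the explicit background. Once this is established, the remainder of the argument amounts to bookkeeping of embedding constants and direct application of the lemmas above; no further analytic input is needed and the case distinction between positive and negative Einstein constant reduces to choosing $t_0$ and the target value of $\trace_g k$.
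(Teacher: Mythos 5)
Your proposal follows essentially the same route as the paper: Cauchy stability for the reduced Einstein equations (the paper cites the proof of Theorem 15.10 in Ringstr\"om's book rather than redoing the wave-gauge reduction), Sobolev embedding to pass to H\"older closeness of the spacetime metrics, application of the two preceding implicit-function-theorem lemmas to produce the CMC graph, and embedding back into $H^s\times H^{s-1}$. The one quantitative caveat is your middle embedding: using the isotropic spacetime embedding $H^{s'}(I\times M)\hookrightarrow C^{\ell,\alpha}(I\times M)$ with $C^{\ell,\alpha}(M)\hookrightarrow H^s(M)$ forces $s'>\tfrac{n+1}{2}+s$, which is half a derivative more than the stated hypothesis $s'>\tfrac n2+s$; the paper avoids this by applying Sobolev embedding slice-by-slice on the $n$-dimensional slices (where $s'>\tfrac n2+s$ suffices) and recovering control of time derivatives from the gauge condition and the evolution equations rather than from an $(n+1)$-dimensional embedding.
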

\begin{proof}
Let $(g_i,k_i)$ be an initial data set converging to $(\gamma,0)$ in $H^{s'}\times H^{s'-1}$.
% Suppose the theorem is wrong.
%Then there exists a neighbourhood $H^s\times H^{s-1}$- neighbourhood $\mathcal{U}$ as in the theorem such that none of the solutions to the initial datas admits $CMC$-hypersurfaces with teh initial data
 By the proof of \cite[Theorem 15.10]{Rin09}, one obtains a sequence of solutions of Einsteins equations $\tilde{g}_i$ such that for each slice $\left\{t\right\}\times M$, $t\in I$, the data $(N_i,X_i,(g_i)_t)$ converges in $H^{s'}$ to the corresponding data of the background solution $\tilde{\gamma}$. Moreover, we have $H^{s'-1}$-convergence of the time derivatives $(\partial_t N_i,\partial_t X_i)$ by the choice of the gauge used in the proof of the above mentioned theorem. Moreover, we have $H^{s'-1}$-convergence
of $\partial_t g_i$ to the corresponding quantity of $\tilde{\gamma}$. 

By Sobolev embedding, we obtain convergence of $( N_i, X_i,(g_i)_t)$ in $C^{s,\alpha}$ and convergence of their time-derivatives $(\partial_t N_i,\partial_t X_i,\partial_t g_i)$ in  $C^{s-1,\alpha}$ on each slice $\left\{t\right\}\times M$. Using the gauge condition and the Einstein equation, we also obtain convergence of higher time-derivatives of the above quantities so that $\tilde{g}_i$ converges to $\tilde{\gamma}$ in $C^{s,\alpha}(I\times M)$. For $i$ large enough, the metrics $\tilde{g}_i$ admit hypersurfaces of constant mean curvature due to the lemmas above and the initial datas $(\bar{g}_i,\bar{k}_i)$ induced on the hypersurfaces converge in $C^{s,\alpha}\times C^{s-1,\alpha}$, hence also in $H^s\times H^{s-1}$. This proves the theorem.
\end{proof}

\section{One-parameter family of initial data with collapsing and expanding regimes}
\label{sec : 4}

This section is concerned with the construction of a one-parameter family of initial data such that for a parameter value strictly above a certain threshold, the future and past development recollapses while for the critical value and below the corresponding future and past development expands for all time. The initial data consists of a product of positive Einstein metrics with identical Einstein constants while both metrics are multiplied by a large respectively by a small constant  - yielding non-equilibrium initial data, where the small factor recollapses. For parameter values close to 1 the Einstein metrics are initially of almost similar volume and expand both for infinite time. The initial data corresponding to the threshold value of the parameter yields a solution where one factor remains constant in time while the second metric expands for infinite time. We proceed with the explicit construction. \\

We consider a product manifold $M\times N$ such that $g_M$ and $g_N$ be Einstein metrics of positive scalar curvature on $M$ and $N$, respectively, with $\dim M=\dim N=m$. Let $n=2m$. The Einstein constants are chosen such that
\eq{\label{einst-pos}
\mathrm{Ric}_{g_N}=(n-1)g_N\quad\mbox{ and }\quad \mathrm{Ric}_{g_M}=(n-1)g_M.
}
Given $s\in(\tfrac12,\infty)$, let
\eq{\label{parametermetrics}
g_{M}(s)= s\cdot g_M\quad\mbox{ and }\quad g_{N}(s)=\frac{s}{2s-1}\cdot g_N.
}
We consider now a Lorentzian metric 
\eq{
\tilde{g}=-dt^2+a(t)^2 g_M(s)+b(t)^2g_N(s)
}
on $I\times M\times N$, where $I\subset\R$ is some interval.
$\tilde g$ is supposed to be a solution of the Einstein equations
\eq{
\mathrm{Ric}_{\tilde g}=n\tilde{g}
}
with initial conditions
\eq{\label{hom-in-cond}
a(0)=b(0)=1 \quad\mbox{ and }\quad  a'(0)=b'(0)=0,
}
which are compatible with the constraints. Furthermore, we define the new variables, $x=\log a$ and $y=\log b$. Einstein equations then imply the system of ODE's
\eq{\label{eq : wp-ev}\alg{
x''&=n-\frac1s(n-1)e^{-2x}-\frac n2[(x')^2+x'y']\\
y''&=n-(2-\frac1s)(n-1)e^{-2y}-\frac n2[y'^2+x'y']
}}
with initial data $x(0)=y(0)=x'(0)=y'(0)=0$. The equation $\tilde{R}_{00}=n\tilde g_{00}$ yields
\eq{
\frac n2(\frac{a''}{a}+\frac{b''}b)=n
}
and equivalently
\eq{\label{eq : wp-nb}
x''+y''+x'^2+y'^2=2.
}
If $s=1$, we recover the generalized de-Sitter metric since $a(t)=b(t)=\cosh(t)$ in this case.
\begin{thm}\label{1parameterthm}
Let $s\in(\tfrac12,\infty)$. Consider initial data $(g_M(s)\oplus g_N(s),0)$ on $M\times N$ where $g_M(s), g_N(s)$ are as in \eqref{einst-pos} and \eqref{parametermetrics}. Then, for $\frac{n-1}{n-2}<s$ or $s<\frac{n-1}{n}$, the future and past development is geodesically incomplete. For $s\in[\frac{n-1}{n} ,\frac{n-1}{n-2}]$ the future and past development is geodesically complete. Moreover, if $s\in (\frac{n-1}{n} ,\frac{n-1}{n-2})$, we have a limit
\eq{\label{limit a/b} C_s=\lim_{t\to\pm\infty}\frac{\volume(M,a(t)^2 g_M(s))}{\volume(N,b(t)^2 g_N(s))}.
}
\end{thm}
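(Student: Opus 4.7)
Write $A := (n-1)/s$ and $B := (2-1/s)(n-1)$; then $A+B = 2(n-1)$ and both are positive since $s > 1/2$. The initial accelerations are $x''(0)=n-A$ and $y''(0)=n-B$, and their signs distinguish the three regimes of the theorem. At the critical value $s=(n-1)/n$ (so $A=n$, $B=n-2$), a direct substitution shows that $(x(t),y(t))=(0,\log\cosh(\sqrt{2}\,t))$ solves \eqref{eq : wp-ev} together with the constraint \eqref{eq : wp-nb}. The resulting Lorentzian metric is a $\cosh^2$-warped product of the form \eqref{model2} on the $N$-factor and is geodesically complete on $\R\times M\times N$. The case $s=(n-1)/(n-2)$ is symmetric under $x\leftrightarrow y$. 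Since $a'(0)=b'(0)=0$, the full system is invariant under $t\mapsto -t$, so it suffices to analyse $t\geq 0$.

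\textbf{Expansion regime $s\in((n-1)/n,(n-1)/(n-2))$.} Here $A,B\in(n-2,n)$, so $x''(0),y''(0)>0$ and both scale factors initially accelerate. The crucial observation is: at any hypothetical first time $t_1>0$ with $x'(t_1)=0$ one would have $x(t_1)\geq 0$ and $x''(t_1)\leq 0$, but the $x$-equation at such a point reduces to $x''(t_1)=n-Ae^{-2x(t_1)}>0$ since $A<n$, a contradiction. Hence $x'>0$ for all $t>0$, and symmetrically $y'>0$. Combined with the dynamically preserved Hamiltonian constraint (a consequence of \eqref{eq : wp-ev} and \eqref{eq : wp-nb})
\begin{equation*}
Ae^{-2x} + Be^{-2y} + \tfrac{n-2}{2}\bigl((x')^2+(y')^2\bigr) + n\,x'y' = 2(n-1),
\end{equation*}
this yields the uniform bound $(x')^2+(y')^2\leq 4(n-1)/(n-2)$, so the solution exists on all of $\R$. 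The averaged equation $u''=2n-Ae^{-2x}-Be^{-2y}-\tfrac{n}{2}(u')^2$ for $u:=x+y$ then drives $u'$ exponentially to its stable equilibrium $u'=2$, while the constraint asymptotically forces $x'=y'=1$; hence $a(t),b(t)\geq Ce^{t}$ for large $t$ and the warped product is future geodesically complete. Past completeness follows by time-reflection.

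\textbf{Collapse regime and volume ratio.} For $s>(n-1)/(n-2)$, $B>n$ forces $y''(0)<0$. On the set $\{y\leq 0,\,y'\leq 0\}$ each term on the right-hand side of the $y$-equation of \eqref{eq : wp-ev} is nonpositive, so this set is positively invariant; comparison with the scalar model $z''=-Be^{-2z}$, whose solutions reach $-\infty$ in finite time by one quadrature, yields $b\to 0$ in finite proper time and hence geodesic incompleteness. The case $s<(n-1)/n$ is the $x\leftrightarrow y$ mirror. For the volume ratio in the open regime, set $m:=n/2$; then
\begin{equation*}
\frac{\volume(M,a(t)^2 g_M(s))}{\volume(N,b(t)^2 g_N(s))}=(2s-1)^{m/2}\,\frac{\volume(M,g_M)}{\volume(N,g_N)}\,e^{m(x-y)},
\end{equation*}
so the claim reduces to proving that $v:=x-y$ has a finite limit. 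The ODE $v''+\tfrac n2 u'v'=Be^{-2y}-Ae^{-2x}$ has exponentially decaying right-hand side and damping coefficient $\tfrac n2 u'\to n$, so a Gronwall estimate gives exponential decay of $v'$ and a finite limit $v_{\infty}$; time-reflection symmetry yields the same limit as $t\to-\infty$. The main technical obstacle is the expansion-regime bootstrap, where the cross-coupling term $x'y'$ prevents treating the two factors independently; the crucial mechanism is that this coupling vanishes at a hypothetical zero of $x'$ (or $y'$), so the bare equation suffices to extract the required strict positivity from $A,B<n$.
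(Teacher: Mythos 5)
Your expansion-regime and volume-ratio arguments are essentially sound and in places cleaner than the paper's: the explicit critical solution at $s=(n-1)/n$ and the use of the conserved Hamiltonian constraint $Ae^{-2x}+Be^{-2y}+\tfrac{n-2}{2}((x')^2+(y')^2)+nx'y'=2(n-1)$ to obtain the a priori bound on $(x')^2+(y')^2$ are nice touches (the paper instead runs a $\tanh$-comparison on the sum equation \eqref{x+y}, cf.\ Lemma \ref{equilibrium}). The collapse regime, however, contains a genuine gap. You assert that on $\{y\le 0,\ y'\le 0\}$ ``each term on the right-hand side of the $y$-equation is nonpositive''. This is false: since $x'\ge 0$ and $y'\le 0$ there, the cross term $-\tfrac n2 x'y'$ is \emph{nonnegative} --- it is an anti-damping term through which the expanding factor opposes the collapse of the shrinking one. (The set is still positively invariant, but only because this term vanishes where $y'=0$, the very mechanism you name in your closing remark.) Consequently the comparison with $z''=-Be^{-2z}$ goes the wrong way: discarding the nonnegative term $-\tfrac n2 x'y'$ yields a \emph{lower} bound on $y''$, not an upper bound, and you provide no a priori control on $x'$ in this regime. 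Finite-time collapse of $b$ therefore does not follow from your argument as written.

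This is exactly the difficulty that the paper's Lemma \ref{non-equilibrium} is built to overcome: it first derives the bound $(x')^2\le 4+\tfrac{4}{n-1}y''+\tfrac{n}{n-2}(y')^2$ from the constraint \eqref{eq : wp-nb}, uses it in a contradiction argument to show that $y$ is unbounded below (your argument does not even exclude $y$ converging to a finite negative limit), and only then obtains finite-time blow-up from the \emph{sum} equation \eqref{x+y}, in which the problematic cross terms combine into the perfect square $-\tfrac n2(x'+y')^2$ so that a tangent-comparison applies. You would need to reinstate some version of these three steps to close the collapse case.
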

\begin{proof}Let us prove the first assertion. Without loss of generality, we restrict to the case $s>\frac{n-1}{n-2}$.
By Lemma \ref{non-equilibrium} below, there exists a time $T_0>0$ such that $\lim_{t\to T_+}y(t)=-\infty$ which in turn implies that the scale factor of the metric $g_N(s)$ satisfies $\lim_{t\to T_+}b(t)=0$. By choice of the initial values, $a$ and $b$ are time-symmetric, hence $\lim_{t\to -T_+}b(t)=0$ as well. 
%Therefore, the unit-speed timelike geodesic $t\mapsto (t,p,q)$ (where $p\in M,q\in N$ are arbitrary) cannot be extended beyond $(-T_+,T_+)$,
%because the metric degenerates.
 Clearly, these solutions are geodesically incomplete in the future and the past.
 
 Let us now prove the second assertion. The case $s=1$ is the case of the de-Sitter space, so there is nothing to prove. We may restrict to the case $1<s\leq \frac{n-1}{n-2}$.
 Then by Lemma \ref{equilibrium}, the functions $x,y$ hence the scale factors $a,b$ exist for all $t>0$. By time-symmetry, they exist for all $t\in \R$. All these solutions are future- and past geodesically complete. Finally, the existence of \eqref{limit a/b} follows from \eqref{limit x-y}.

%Because the lapse function is $N=1$, the shift vector is $0$ and the metrics $g(t)= a^2(t)g_M[s]+b^2(t)g_N[s]$ are bounded from below, the corresponding solution is globally hyperbolic, see \cite[Theorem 2.1]{CBC02}.
%The traceless part of the second fundamental form is
%\eq{
%\Sigma=\frac{1}{2}(-\frac{a'}{a}+\frac{b'}{b})a^2 g_M[s]+\frac{1}{2}(\frac{a'}{a}-\frac{b'}{b})b^2 g_N[s]}
%so that $|\Sigma|_{g(t)}=\frac{\sqrt{n}}{2}|\frac{a'}{a}-\frac{b'}{b}|$, which decays exponentially because of the decay of $x'-y'$ shown at the end of Lemma \ref{equilibrium} below.
%Now future geodesic completeness follows from \cite[Corollary 3.3]{CBC02} and past geodesic completeness follows from time symmetry.
\end{proof}

\subsection{Evolution of non-equilibrium initial data}

We consider first the case of non-equilibrium initial data, which we define by
\eq{
s>\frac{n-1}{n-2}.
}
If $s<\frac{n-1}{n}$ the roles of $g_M(s)$ and $g_N(s)$ interchange, so we may restrict to the first case.
We prove the following lemma.

\begin{lem}\label{non-equilibrium}
For non-equilibrium initial data initial data, the solution $(x(t),y(t))$ of the system
\eqref{eq : wp-ev} with initial data $x(0)=y(0)=x'(0)=y'(0)=0$ does not exist for all time.
More precisely, there exists a time $T_+>0$ such that $\lim_{t\to T_+}y(t)=-\infty$.
\end{lem}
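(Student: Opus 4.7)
My strategy is to extract from the Hamiltonian constraint $x''+y''+x'^2+y'^2=2$ a first integral that bounds $|y'|$ from below in terms of $e^{-y}$, and then integrate a Riccati-type differential inequality to show $e^{y}$ vanishes in finite time. The reason I want to avoid working with the $y''$-equation directly is that the cross term $-(n/2)x'y'$ has the \emph{wrong} sign once $x$ grows and $y$ contracts, so it cannot be used to force crunch; the algebraic constraint, by contrast, sees only $x,y$ and not their derivatives of bad sign.

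\textbf{Monotonicity.} Writing $\alpha:=2-1/s$, the hypothesis $s>(n-1)/(n-2)$ is equivalent to $\alpha(n-1)>n$, so $y''(0)=n-\alpha(n-1)<0$ while $y'(0)=0$. Hence $y$ strictly decreases from $0$. If $y'$ were to return to $0$ at a first time $t_*>0$, then $y(t_*)<0$ would give $e^{-2y(t_*)}>1$, and the vanishing of $x'(t_*)y'(t_*)$ would yield $y''(t_*)\leq n-\alpha(n-1)<0$, which is incompatible with $y'$ approaching $0$ from below. Thus $y'<0$ and $y<0$ on the whole maximal right-existence interval $[0,T_+)$.

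\textbf{Mean-shear variables and sign preservation.} Set $H:=(x'+y')/2$, $\sigma:=(x'-y')/2$ and $P:=\tfrac{1}{2s}e^{-2x}+\tfrac{\alpha}{2}e^{-2y}$. Summing the two evolution equations and using the constraint, together with taking their difference, produces
\begin{align*}
H'=n(1-H^2)-(n-1)P,\quad \sigma^2=(n-1)(H^2+P-1),\quad \sigma'=\tfrac{n-1}{2}\bigl[\alpha e^{-2y}-\tfrac{1}{s}e^{-2x}\bigr]-nH\sigma.
\end{align*}
An ODE comparison with $Z'=n(1-Z^2)$, $Z(0)=0$, gives $H(t)\leq \tanh(nt)<1$ on $[0,T_+)$. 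I would then argue $\sigma>0$ on $(0,T_+)$ by a propagation argument: $\sigma(0)=0$ and $\sigma'(0)=(n-1)(1-1/s)>0$, so $\sigma>0$ immediately past $0$. Whenever $\sigma>0$, the identity $x'-y'=2\sigma>0$ implies $y-x$ is strictly decreasing, hence stays below the initial value $0$, and in particular below $\tfrac{1}{2}\ln(\alpha s)$; this keeps the bracket in the formula for $\sigma'$ strictly positive. At any putative first zero $t_*>0$ of $\sigma$ one would therefore have $\sigma'(t_*)>0$, contradicting a sign-changing approach from above.

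\textbf{Lower bound on $|y'|$ and finite-time blow-up.} Because $\sigma^2\geq(n-1)(P-1)\geq(n-1)(\tfrac{\alpha}{2}e^{-2y}-1)$, there is a threshold $y_0<0$ such that for $y\leq y_0$ one has $\sigma\geq c_1 e^{-y}$ with $c_1:=\tfrac{1}{2}\sqrt{(n-1)\alpha}$. Combined with $y'=H-\sigma$ and $H<1$, this gives $-y'\geq c_1 e^{-y}-1\geq \tfrac{c_1}{2}e^{-y}$ for $y$ below a further threshold $y_1<y_0$. Writing this as $(e^{y})'=e^{y}y'\leq -c_1/2$ and integrating from the time $t_1$ at which $y(t_1)=y_1$ yields $e^{y(t)}\leq e^{y_1}-\tfrac{c_1}{2}(t-t_1)$, so $e^{y}$ must hit $0$ by time $T_+\leq t_1+2e^{y_1}/c_1$, forcing $y(t)\to-\infty$ as $t\to T_+$.

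\textbf{Main obstacle.} The genuine technical point is precisely the one that motivates the whole setup: the cross term $-(n/2)x'y'$ in the $y''$-equation is positive when $x'>0$ and $y'<0$, so it actively opposes the crunch and blocks any direct ODE comparison from $y''$ alone. Reading $|y'|$ off the algebraic constraint sidesteps this entirely, but it forces one to verify that the shear $\sigma$ keeps a definite sign, which is the only place the proof requires more than linear comparison — it uses the propagation of the inequality $y<x$ combined with the shear equation to exclude a sign change in $\sigma$.
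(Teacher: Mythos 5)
Your reduction to the variables $H=(x'+y')/2$, $\sigma=(x'-y')/2$, $P=\tfrac{1}{2s}e^{-2x}+\tfrac{\alpha}{2}e^{-2y}$ is correct (I checked the three identities, the bound $H\leq\tanh(nt)<1$, the monotonicity of $y$, and the sign-propagation argument for $\sigma$ via $y-x\leq 0$ and $\alpha s=2s-1>1$), and it is a genuinely different and in places cleaner route than the paper's, which works directly with differential inequalities for $x''\pm y''$. However, there is a genuine gap at the decisive step: your blow-up mechanism $-y'\geq \tfrac{c_1}{2}e^{-y}$ is only activated once $y$ drops below the strictly negative threshold $y_1$ (you need $\tfrac{\alpha}{2}e^{-2y}-1\gtrsim e^{-2y}$ and $c_1e^{-y}-1\gtrsim e^{-y}$), and you integrate "from the time $t_1$ at which $y(t_1)=y_1$" without ever proving such a time exists. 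Strict monotonicity of $y$ is perfectly compatible with $y$ converging to a finite limit $y_\infty\in(y_1,0)$ as $t\to\infty$; nothing in your argument excludes this, and excluding it is not trivial — the asymptotic fixed-point analysis of your $(H,\sigma)$ system does yield a contradiction (it forces $e^{-2y_\infty}=n/(\alpha(n-1))<1$, i.e. $y_\infty>0$), but that analysis is precisely the content you have omitted. This is where the paper's proof spends most of its effort: the paragraph showing $y$ is unbounded from below (assuming a finite limit, extracting $y'\to0$ and $y''\to 0$ along a sequence, and deriving that $x'$ must diverge), together with the auxiliary inequality $(x')^2\leq 4+\tfrac{4}{n-1}y''+\tfrac{n}{n-2}(y')^2$ which rules out that $x$ or $x'$ breaks down first.

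A closely related point: even after $y$ reaches $y_1$, the inequality $(e^y)'\leq -c_1/2$ only shows the solution cannot exist beyond $t_1+2e^{y_1}/c_1$; to conclude $\lim_{t\to T_+}y(t)=-\infty$ (rather than breakdown of $x$, $x'$ or $H$ at some earlier time with $y$ still bounded) you need a continuation criterion of the form "the solution persists as long as $y$ stays bounded below", which again rests on an a priori bound for $x'$ (e.g. $x'>0$, provable by the same first-zero argument you used for $y'$, plus a bound like the paper's on $(x')^2$). In short: your argument establishes a correct conditional statement, but the unconditional parts — that $y$ actually descends to the threshold and that nothing else blows up first — are missing, and they constitute the technical core of the paper's proof.
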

\begin{proof}
The condition on $s$ implies
\eq{
y''(0)=n-(2-\frac1s)(n-1)<0
}
and
\eq{
x''(0)=n-\frac1s(n-1)>2.
}

We show that $x$ and $y$ are strictly monotonically increasing (decreasing, respectively) on the interval of existence.

Since $x''(0)>0$, we have $x'>0$ for small $t>0$. Let $t_0>0$ denote the first time, such that $x'(t_0)=0$. Then $\eqref{eq : wp-ev}$ implies
\eq{
x''(t_0)=n-\frac{1}{s}(n-1)e^{-2x(t_0)}>2>0,
}
which in turn implies $x'(t)<0$ for $t\in(t_1-\varepsilon,t_1)$. Therefore $x'>0$ as long as it exists. Analogously one can show that $y$ is strictly monotonically decreasing on the interval of existence.
Next we show, that $x$ exists at least as long as $y''$ und $y'$. By \eqref{eq : wp-nb} and the monotonicity of $x$ and $y$ we obtain
\eq{\alg{
x''-y''&=(n-1)[(2-\frac{1}{s})e^{-2y}-\frac{1}{s}e^{-2x})]-\frac{n}{2}[(x')^2-(y')^2]\\
                     &\leq2(n-1)-\frac{n}{2}[(x')^2-(y')^2]
}}
Using \eqref{eq : wp-nb}, $x''$ can be eliminated from this inequality. Elementary manipulations yield

\eq{
\label{bounded}(x')^2\leq 4+\frac{4}{n-1}y''+\frac{n}{n-2}(y')^2.
}
which proves the claim.
We show in the following that $y$ is unbounded from below. Assume the contrary. Then by the strict monotonicity of $y$ the existence of a limit $\lim_{t\to\infty}y(t)=:y(\infty)$ follows, so does $\lim_{t\to\infty}y'(t)=0$. Then
\eq{
\int_0^{\infty}y''(t)dt=\lim_{t\to\infty}y'(t)-y'(0)=0,
} so either $\lim_{t\to\infty}y''(t)=0$ or there is a sequence $s_i\to \infty$ such that $y''(s_i)=0$ for all $i\in\N$. Using \eqref{eq : wp-ev} would then imply $x'(t)$ diverges for $t\to\infty$, which however is a contradiction to \eqref{bounded}.\\
\noindent
Finally, we show that $y$ blows up in finite time. Addition of both equations in \eqref{eq : wp-ev} yields
\eq{\label{x+y}
x''+y''=2n-\frac{1}{s}(n-1)e^{-2x}-(2-\frac{1}{s})(n-1)e^{-2y}-\frac{n}{2}(x'+y')^2.
}
By monotonicity of $x$ and $y$ and the unboundedness of $y$ from below there is a constant $C>0$ and a time $t_1>0$ such that the differential inequality 
\eq{
x''+y''\leq-C-\frac{n}{2}(x'+y')^2
}
holds for all $t\geq t_1$. Here $t_1>0$ is arbitrary. The corresponding ODE is solved by the tangens. Therefore $x'+y'$ blows up in finite time (say $t_2>t_1$) towards $-\infty$. As $x'>0$, $y'$ blows up. In addition, we also have $y(t_2)=\int_0^{t_2}y'(t)dt=-\infty$, which implies that $y$ diverges.
\end{proof}

\subsection{Equilibrium initial data}\label{kleine_s}
We consider now equilibrium initial data given by
\eq{
1<s\leq \frac{n-1}{n-2}.
}
\begin{lem}\label{equilibrium}
For equilibrium initial data initial data,
the solution $(x(t),y(t))$ of the system
\eqref{eq : wp-ev} with initial data $x(0)=y(0)=x'(0)=y'(0)=0$ exists for all $t>0$. Moreover, if $s< \frac{n-1}{n-2}$, we have a limit
\eq{\label{limit x-y}C_s=\lim_{t\to\infty}(x(t)-y(t)).}
\end{lem}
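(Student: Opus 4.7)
The plan is to first establish global existence by extracting a priori bounds from the Hamiltonian constraint and monotonicity, then to analyze the asymptotic behavior of $z = x - y$ via a damped linear ODE with exponentially decaying forcing. First I would show that $x$ and $y$ are strictly increasing on the interval of existence. The equilibrium assumption gives $x''(0) = n - \tfrac{n-1}{s} > 0$ and $y''(0) = n - (2 - \tfrac{1}{s})(n-1) \geq 0$, with strict inequality whenever $s < \tfrac{n-1}{n-2}$. In the strict regime the argument mirrors the one in Lemma~\ref{non-equilibrium}: at any first zero $t_0 > 0$ of $y'$, monotonicity on $[0, t_0]$ forces $y(t_0) > 0$, hence $e^{-2y(t_0)} < 1$, and the evolution equation yields $y''(t_0) > 0$, contradicting $t_0$ being a first zero. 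The argument for $x$ is identical. At the critical value $s = \tfrac{n-1}{n-2}$ direct substitution shows that $y \equiv 0$ solves the system, and local uniqueness of ODE solutions identifies this as the only solution; global existence then reduces to the scalar equation for $x$, which is handled by the same Riccati-type comparison below.

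To obtain a velocity bound, adding both equations and using that $x, y \geq 0$ keeps the exponentials in $(0,1]$ yields, for $H := x' + y'$,
\[
H' \leq 2n - \tfrac{n}{2} H^2.
\]
Comparison with the Riccati equation $\tilde H' = 2n - \tfrac{n}{2} \tilde H^2$, $\tilde H(0) = 0$, whose unique solution increases monotonically to $2$, gives $H \leq 2$. Together with $x', y' > 0$ this bounds $x'$ and $y'$ individually, so the evolution equations also bound $x''$ and $y''$, and the solution extends to all $t > 0$. For the long-time behaviour, monotonicity implies that $x$ and $y$ have limits in $(0, \infty]$; a finite limit for either variable would force the corresponding derivative to tend to zero (a Barbal\u{a}t-type argument using the bound on the second derivative), at which point the right-hand side of the respective evolution equation converges to a strictly positive number, a contradiction. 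Therefore $x, y \to \infty$, the exponentials in the equation for $H$ vanish, $H' = \tfrac{n}{2}(4 - H^2) + o(1)$, and a comparison argument yields $H \to 2$. Combined with the Hamiltonian constraint $(x')^2 + (y')^2 = 2 - H'$, one obtains $(x' - 1)^2 + (y' - 1)^2 \to 0$, so $x' \to 1$ and $y' \to 1$, and both $x$ and $y$ grow asymptotically linearly.

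For the limit of $z = x - y$, subtracting the two evolution equations gives the linear first-order ODE
\[
w' + \tfrac{n}{2} H\, w = (n-1)\bigl[(2 - \tfrac{1}{s}) e^{-2y} - \tfrac{1}{s} e^{-2x}\bigr], \qquad w := z'.
\]
The damping coefficient $\tfrac{n}{2} H$ converges to $n$, while the right-hand side decays exponentially since $x, y \sim t$. Variation of parameters then gives $|z'(t)| \lesssim e^{-c t}$ for some $c > 0$ (with at worst an additional polynomial factor in the borderline case $n = 2$), so $\int_0^\infty z'(t)\, dt$ converges and $z(t) \to C_s$.

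The main technical obstacle is this last step: one has to quantify the rate at which $H \to 2$ precisely enough so that the integrating factor in the variation-of-parameters formula dominates the exponential decay of the forcing, and in particular to verify integrability in dimension $n = 2$, where the damping rate only barely outruns the decay of the exponentials appearing on the right-hand side.
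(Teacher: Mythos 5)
Your proposal is correct and follows essentially the same route as the paper: monotonicity of $x$ and $y$, a Riccati/$\tanh$ comparison for $x'+y'$ giving global existence and (eventually) linear growth of $x+y$, and then treating $z'=x'-y'$ as a uniformly damped linear ODE with exponentially decaying forcing, whose solution is integrable. The rate-matching issue you flag as the "main technical obstacle" is not actually delicate: any eventual strictly positive lower bound on the damping $\tfrac{n}{2}(x'+y')$ together with any exponential decay of the forcing already yields exponential (hence integrable) decay of $x'-y'$, which is exactly how the paper closes the argument without needing the precise limits $x',y'\to 1$.
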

\begin{proof}
We consider the case of $1<s<\tfrac{n-1}{n-2}$. Then
\eq{
y''(0)=n-(2-\frac{1}{s})(n-1)\in(0,1)
}
and
\eq{
x''(0)=n-\frac{1}{2}(n-1)\in (1,2).
}
Both $x$ and $y$ are strictly monotonically increasing. We have $x'(t)>0$ for small $t$. Let $t_0$ be the first time, where $x'(t_0)=0$. Then by $\eqref{eq : wp-nb}$ we obtain
\eq{
x''(t_0)=n-\frac{1}{s}(n-1)e^{-2x(t_0)}>0,
}
as long as $y'(t_0)$ exists. Thus $x'<0$ on $(t_0-\varepsilon,t_0)$, which causes the contradiction.
 Analogously one shows that $y$ is strictly monotonically increasing. From \eqref{x+y} and monotonicity we deduce
\eq{
2-\frac{n}{2}(x'+y')^2\leq x''+y''\leq 2n-\frac{n}{2}(x'+y')^2.
}
The solution of the corresponding ODE is $\tanh$. This implies $0<x'+y'<C$ for all $t>0$. Due to the positivity of $x'$ and $y'$ these statements hold for $x'$ and $y'$ individually.
In particular, $x$ and $y$ exist for all times.
In addition we have $x'(t)+y'(t)>C_1>0$ for all $t\geq t_1$ and $x(t)+y(t)>C_2t$ for all $t\geq t_1$ and $C_2>0$. Using what we have seen so far we obtain the following estimates.
\eq{\alg{
x''-y''&=(n-1)[(2-\frac{1}{s})e^{-2y}-\frac{1}{s}e^{-2x})]-\frac{n}{2}[(x')^2-(y')^2]\\
                     &\leq(n-1)(2-\frac{1}{s})e^{-2(x+y)}-\frac{n}{2}(x'-y')(x'+y')\\
                     &\leq(n-1)(2-\frac{1}{s})e^{-2C_2 t}-C_3(x'-y')\\
}}
This differential inequality holds for $t>t_1$. On the other hand, we also have
\eq{
 x''-y''\geq-(n-1)\frac{1}{s}e^{-2C_2 t}-C_4(x'-y').
}
From these inequalities we deduce that $x'-y'$ decays exponentially and converges to $0$ as $t\to\infty$. The exponential decay implies the existence of the limit 
\eq{
\lim_{t\to\infty}(x(t)-y(t))=\int_0^{\infty}(x'(t)-y'(t))dt.
}
In the boundary case $s=\tfrac{n-1}{n-2}$, $y''(0)=0$. Thus, $y\equiv 0$ and the system reduces to the initial value problem
\eq{x''=n-(n-2)e^{-2x}-\frac{n}{2}(x')^2,\qquad x(0)=x'(0)=0.}
By similar arguments as above, one shows that $x$ is strictly monotonically increasing. An immediate implication is $x''<n$, which implies that $x$ grows at most quadratically. Therefore, it it exists for all time.
\end{proof}

%Discuss when the manifold is einstein i.e. nonlin stability applies\\
%Why not $s=1$?\\
%Do we need negative einstein examples here as well? maybe just state the lemma?\\
%Should we give a theorem including all solutions?\\
%What happens for products of positive AND negative curvature?

\subsection{Products of negative curvature}
Finally, we address the case of products of negative Einstein metrics. We consider an analogous construction as in the previous sections.
Let $g_M$ and $g_N$ be two compact $m$-dimensional Einstein metrics with 
\eq{\label{einst-neg}\alg{
\ric_{g_M}&=-(n-1)g_M\qquad\text{ and }\qquad\ric_{g_N}&=-(n-1)g_N,
}}
where $n=2m$. Let $s\in(\frac{1}{2},\infty)$ and
\eq{\label{parametermetrics2}
g_{M}(s)=s\cdot g_M\qquad g_{N}(s)=\frac{s}{2s-1}\cdot g_N
}
We consider a Lorentzian metric of the form
\eq{
\tilde{g}=-dt^2+a(t)^2g_{M}(s)+b(t)^2g_{N}(s),
}
and demand $\ric_{\tilde{g}}=n\cdot \tilde{g}$. We have the conditions $a(0)=b(0)=1$ and $a'(0)=b'(0)=\sqrt{2}$ compatible with the constraints. Defining the variables $x=\log(a)$ und $y=\log(b)$ yields the system of ODE's
\eq{\alg{x''&=n+\frac{1}{s}(n-1)e^{-2x}-\frac{n}{2}[(x')^2+x'y']\\
   y''&=n+(2-\frac{1}{s})(n-1)e^{-2y}-\frac{n}{2}[(y')^2+x'y']
}}
with initial conditions $x(0)=y(0)=0$, $x'(0)=y'(0)=\sqrt{2}$. Due to $\tilde{R}_{00}=n\cdot \tilde{g}_{00}$, we have
\eq{
\frac{n}{2}(\frac{a''}{a}+\frac{b''}{b})=n
}
or equivalently
\eq{
\label{ZB}x''+y''+(x')^2+(y')^2=2.
}
In the case $s=1$, we recover the background metric
\eq{ -dt^2+\sinh^2(t-\arcsinh(1))(g_M\oplus g_N).
}
For this system we obtain the
following result.
\begin{thm}
Let $s\in(\tfrac12,\infty)$. Consider initial data $(g_M(s)\oplus g_N(s),-\sqrt2(g_M(s)\oplus g_N(s)))$ on $M\times N$ where $g_M(s), g_N(s)$ are as in \eqref{einst-neg} and \eqref{parametermetrics2}. Then, the future development is geodesically complete. Moreover, we have a limit
\eq{ C_s=\lim_{t\to\infty}\frac{\volume(M,a(t)^2 g_M(s))}{\volume(N,b(t)^2 g_N(s))}.
}
\end{thm}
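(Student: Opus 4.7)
The plan is to reduce the theorem to an analysis of the ODE system for $(x,y)$ with the Hamiltonian constraint \eqref{ZB} and initial data $x(0)=y(0)=0$, $x'(0)=y'(0)=\sqrt{2}$, following the structural template of Lemma \ref{equilibrium}; the key simplification compared to that lemma is that the potential terms $\tfrac{1}{s}(n-1)e^{-2x}$ and $(2-\tfrac{1}{s})(n-1)e^{-2y}$ now carry the favorable sign, so no recollapse can occur for any $s\in(\tfrac12,\infty)$. I would first establish monotonicity: since $x'(0)=\sqrt{2}>0$, if $x'$ first vanished at some $t_0>0$ then the cross term $x'y'$ drops out of the $x$-equation and
\begin{align*}
x''(t_0)=n+\tfrac{1}{s}(n-1)e^{-2x(t_0)}>0,
\end{align*}
contradicting that $x'$ decreases through zero. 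The same barrier argument handles $y'$, so $a,b\geq 1$ throughout.

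Next I would deduce global existence from the constraint. Setting $u=x'+y'$, \eqref{ZB} gives $u'=2-(x')^2-(y')^2$, and the elementary bound $\tfrac12 u^2\leq (x')^2+(y')^2\leq u^2$ (which uses $x',y'\geq 0$) traps $u$ between the barriers $2-u^2$ and $2-u^2/2$. Combined with $u(0)=2\sqrt{2}$ this yields $u(t)\in[\sqrt{2},2\sqrt{2}]$ for all times of existence, bounding $x',y'$ individually; together with $e^{-2x},e^{-2y}\leq 1$, the right-hand sides of the ODE system are then bounded, ruling out finite-time blow-up.

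For future geodesic completeness of the doubly warped product $-dt^2+a^2 g_M(s)+b^2g_N(s)$ I would use the conserved quantities $a^4 g_M(\dot p,\dot p)$ and $b^4 g_N(\dot q,\dot q)$ along any geodesic, obtained by integrating the Euler-Lagrange equations in the $M$- and $N$-directions. Since $a,b\geq 1$ by monotonicity, the quantities $a^2|\dot p|^2_{g_M}$ and $b^2|\dot q|^2_{g_N}$ remain uniformly bounded, so along any future-directed timelike geodesic $\dot t^2=1+a^2|\dot p|^2+b^2|\dot q|^2$ is bounded; hence $t(\tau)\to+\infty$ requires the affine parameter to diverge, which is future geodesic completeness.

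For the volume ratio I would subtract the two evolution equations to obtain
\begin{align*}
w''+\tfrac{n}{2}(x'+y')\,w'=\tfrac{n-1}{s}e^{-2x}-(n-1)\bigl(2-\tfrac{1}{s}\bigr)e^{-2y}
\end{align*}
for $w=x-y$, a damped linear ODE in $w'$ whose damping coefficient is bounded below by $\tfrac{n\sqrt{2}}{2}$. The main obstacle is to show the right-hand side decays exponentially, which requires individual uniform positive lower bounds on $x'$ and $y'$ at large $t$; these should be established by a contradiction argument in the spirit of Lemma \ref{equilibrium} (if $x'(t_k)\to 0$ along a sequence, the $x$-equation forces $\liminf x''(t_k)\geq n>0$, contradicting that assumption). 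A Gronwall-type estimate then yields $|w'(t)|\leq Ce^{-ct}$ so that $w(t)$ converges to a finite limit $w_\infty$; since $\volume(M,a^2g_M(s))/\volume(N,b^2g_N(s))=e^{m(x-y)}\cdot\text{const}$, the desired limit $C_s$ exists.
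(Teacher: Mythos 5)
Your proposal is correct and follows the same overall strategy as the paper, which proves this theorem simply by asserting that Lemma \ref{equilibrium} carries over: barrier argument for monotonicity of $x$ and $y$, a two-sided comparison for $x'+y'$ giving global existence, and exponential decay of $x'-y'$ giving the limit $C_s$. Your execution differs in three places worth noting, mostly to your advantage. First, trapping $u=x'+y'$ between the barriers $2-u^2$ and $2-u^2/2$ via the constraint \eqref{ZB} is cleaner than the paper's $\tanh$-comparison and directly hands you the lower bound $u\geq\sqrt{2}$ used for the damping later. Second, your completeness step via the conserved quantities $a^4|\dot p|^2_{g_M}$ and $b^4|\dot q|^2_{g_N}$ supplies an argument the paper omits entirely. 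Third, and most substantively, for the decay of the source term in the $w=x-y$ equation the paper's Lemma \ref{equilibrium} bounds $(2-\tfrac1s)e^{-2y}-\tfrac1s e^{-2x}$ by $(2-\tfrac1s)e^{-2(x+y)}$, an inequality that would require $x\leq 0$ and so does not actually hold; your route through individual positive lower bounds on $x'$ and $y'$ is the correct repair. One caveat: your parenthetical contradiction ("$\liminf x''(t_k)\geq n$ contradicts $x'(t_k)\to 0$") is not a contradiction as stated, since $x'$ could oscillate. Run it as a barrier argument instead: choose $\eta>0$ so small that $x'\leq\eta$ forces $x''\geq n/2$ (possible because $y'$ is bounded), and conclude that $x'$, starting at $\sqrt{2}$, can never cross $\eta$ downward. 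With that fix the Gronwall step and the existence of $C_s$ go through as you describe.
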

\begin{proof}
For any $s\in(\tfrac12,\infty)$, an analogoue of Lemma \ref{equilibrium} can be proven by the same arguments. Then the theorem follows as in the second part of Theorem \ref{1parameterthm}.
\end{proof}
%\marginnote{What happens if one takes a product from a positive and a negative Einstein manifold? - just curious.}

%%%%%%%%%%%%%%%
%%%%%%%%%%%%%%%
%%%%%%%%%%%%%%%

%The shift equation in rescaled time reads.
%\eq{\alg{
%\Delta_{{g}}{X}^i+{R}^i_m{X}^m-\mathcal{L}_{{X}}V_{{g}}^i&=2\nabla_j{N}{\Sigma}^{ij}+\tau s(\tau)^{-1/2}
%(\frac{2}{n}-1)\nabla_{{g}}^i{N}\\
%&\quad-(2{N}{\Sigma}^{mn}-(\mathcal{L}_{{X}}{g})^{mn})
%(\Gamma_{mn}^i-\hat{\Gamma}_{mn}^i)
%}}

%%%%%%%%%%%%%%%
%%%%%%%%%%%%%%%
%%%%%%%%%%%%%%%

\section{Nonlinear Stability}\label{sec : 6}
We turn now to the main part of the paper, presenting the proof of the nonlinear stability results, Theorem \ref{main-thm-neg} and Theorem \ref{main-thm}. The proof consists of four steps: a rescaling of the system, local stability results, elliptic estimates for lapse and shift and a uniform energy estimate for the evolving data $g$ and $\Si$. The steps distinguish formally between the cases of positive and negative curvature of the background geometry. However, both cases can be handled more or less similarly, so that we present most of the arguments only for one case explicitly and in detail.  
\subsection{CMC-Einstein flow in rescaled time}
Let us relabel the solution $(g,\Sigma,B,X)$ of the elliptic-hyperbolic system by
$\tilde{g},\tilde{\Sigma},\tilde{N},\tilde{X}$.
We now introduce new scale-invariant variables by
\eq{
g=s(\tau)\tilde{g},\qquad N=s(\tau)\tilde{N},\qquad \Sigma=s(\tau)^{1/2}\tilde{\Sigma},\qquad X=s(\tau)^{1/2}\tilde{X},
}
where $s(\tau)$ is defined below.
%%%%
\subsubsection{Rescaling of the CMC flow}
In the CMC case, we define the scale factor as $s(\tau)=(\frac{\tau}{n})^2-1$.
In these variables, the constraint equations read
\eq{
R(g)-|\Sigma|_{g}^2=-(n-1)n,\qquad
\nabla^{i}\Sigma_{ij}=0.
}
Furthermore we define a new time-variable $\T$ by the equation $\tau=-n\frac{\cosh(\T)}{\sinh(\T)}$. This time coincides with the time of the solution \eqref{model1}.
We now rewrite this system in the rescaled variables and the time variable $\T$.
The defining equations for lapse and shift are
\eq{\label{resc-evol-1}\alg{
\Delta_{g}N&=-1+N(|\Sigma|_{g}^2+n)\\
\Delta_{g}X^i+R^i_mX^m&=2\nabla_jN\Sigma^{ij}-\cosh(\T)
(2-n)\nabla^iN-(2N\Sigma^{mn}-(\mathcal{L}_{X}g)^{mn})
(\Gamma_{mn}^i-\hat{\Gamma}_{mn}^i).
%\Delta_{g}X^i+R^i_mX^m-\mathcal{L}_{X}V_{g}^i=&2\nabla_j(N\Sigma)^{ij}+\tau s(\tau)^{-1/2}
%(\frac{2}{n}-1)\nabla_{g}^iN\\
%&-(2N\Sigma^{mn}+2\frac{\tau}{n}s^{-1/2}Ng^{mn}-(\mathcal{L}_{X}g)^{mn})
%(\Gamma_{mn}^i-\hat{\Gamma}_{mn}^i)
}}
Here we additionally used that $\nabla_j\Sigma^{ij}=0$ and $V^i=g^{ij}(\Gamma_{ij}^k-\hat{\Gamma}_{ij}^k)=0$
The evolution equations are
%\begin{align*}
%\partial_tg_{ab}=&s^{-1}2\tau(\frac{1}{n^2}-\frac{N}{n})g_{ab}
%-s^{-1/2}(2N\Sigma_{ab}-\mathcal{L}_{X}g_{ab})\\
%\partial_t\Sigma_{ab}=&\tau s^{-1}(\frac{1}{n^2}+N-\frac{2N}{n})\Sigma_{ab}+s^{-1/2}(\mathcal{L}_{X}\Sigma_{ab}-\frac{1}{n}g_{ab}-\nabla^2_{ab}N)\\
%&+s^{-1/2}N(R_{ab}-2\Sigma_{ai}\Sigma^i_b+ng_{ab})
%\end{align*}
%We introduce the time variable $T$, defined by $t=\tau=-n\frac{\cosh(T)}{\sinh(T)}$. In this new time variable, the equation for the shift is \marginnote{Motivate the time variable T}
%\begin{align*}
%\Delta_{g}X^i+R^i_mX^m=&2\nabla_jN\Sigma^{ij}-\cosh(T)
%(\frac{2}{n}-1)\nabla_{g}^iN
%-(2N\Sigma^{mn}-(\mathcal{L}_{X}g)^{mn})
%(\Gamma_{mn}^i-\hat{\Gamma}_{mn}^i).
%\end{align*}
%. The evolution equations with respect to this time variable are
\eq{\label{resc-evol-2}\alg{
\partial_\T g_{ij}=&-2\frac{\cosh(\T)}{\sinh(\T)}(1-nN)g_{ij}-\frac{n}{\sinh(\T)}(2N\Sigma_{ij}-\mathcal{L}_Xg_{ij})\\
\partial_\T\Sigma_{ij}=&-n^2\frac{\cosh(\T)}{\sinh(\T)}(\frac{1}{n^2}+N-\frac{2N}{n})\Sigma_{ij}+\frac{n}{\sinh(\T)}N(R_{ij}+ng_{ij}-2\Sigma_{ik}\Sigma^k_j)\\
&+\frac{n}{\sinh(\T)}(\mathcal{L}_X\Sigma_{ij}-\frac{1}{n}g_{ij}-\nabla_{i}\nabla_{j}N).
}}

\subsubsection{Rescaling of the reversed CMC flow}
In the reversed CMC case, we rescale with $s(\tau)=1-(\frac{\tau}{n})^2$.
 Then,
some signs change. The constraints are
\eq{
R(g)-|\Sigma|_{g}^2=(n-1)n,\qquad
\nabla^{a}\Sigma_{ab}=0.
}
The defining equations for lapse and shift are
\eq{\alg{
\Delta_{g}N=&1+N(|\Sigma|_{g}^2-n),\\
\Delta_{g}X^i+R^i_mX^m=&2\nabla_jN\Sigma^{ij}-\sinh(\T)
(2-n)\nabla^iN
-(2N\Sigma^{mn}-(\mathcal{L}_{X}g)^{mn})
(\Gamma_{mn}^i-\hat{\Gamma}_{mn}^i)
%\Delta_{g}X^i+R^i_mX^m-\mathcal{L}_{X}V_{g}^i=&2\nabla_j(N\Sigma)^{ij}+\tau s(\tau)^{-1/2}
%(\frac{2}{n}-1)\nabla_{g}^iN\\
%&-(2N\Sigma^{mn}+2\frac{\tau}{n}s^{-1/2}Ng^{mn}-(\mathcal{L}_{X}g)^{mn})
%(\Gamma_{mn}^i-\hat{\Gamma}_{mn}^i)
}}
and the evolution equations are
\eq{\alg{
\partial_\T g_{ij}=&-2\frac{\sinh(\T)}{\cosh(\T)}(1-nN)g_{ij}-\frac{n}{\cosh(\T)}(2N\Sigma_{ij}-\mathcal{L}_Xg_{ij}),\\
\partial_\T\Sigma_{ij}=&-n^2\frac{\sinh(\T)}{\cosh(\T)}(\frac{1}{n^2}+N-\frac{2N}{n})\Sigma_{ij}+\frac{n}{\cosh(\T)}N(R_{ij}+ng_{ij}-2\Sigma_{ik}\Sigma^k_j)\\
&+\frac{n}{\cosh(\T)}(\mathcal{L}_X\Sigma_{ij}+\frac{1}{n}g_{ij}-\nabla_i\nabla_jN).
}}
%\begin{align}
%\partial_tg_{ab}=&s^{-1}2\tau(\frac{1}{n^2}-\frac{N}{n})g_{ab}
%-s^{-1/2}(2N\Sigma_{ab}-\mathcal{L}_{X}g_{ab})\\
%\partial_t\Sigma_{ab}=&\tau s^{-1}(-\frac{1}{n^2}+N-\frac{2N}{n})\Sigma_{ab}+s^{-1/2}(\mathcal{L}_{X}\Sigma_{ab}+\frac{1}{n}g_{ab}-\nabla^2_{ab}N)\\
%&+s^{-1/2}N(R_{ab}-2\Sigma_{ai}\Sigma^i_b-ng_{ab})
%\end{align}

%%%%%%%%%%%%%%%%%
%%%%%%%%%%%%%%%%%
%%%%%%%%%%%%%%%%%

\subsection{Local existence}\label{ssec : 25}
We have the following local existence theorem in CMCSH gauge, for the initial data, which we consider in this paper.
We distinguish between the cases of positive and negative curvature beginning with the latter.
\begin{lem}[Analogous to {\cite[Theorem 5.1]{AnMo03}}]
Let $\ga$ be a fixed Einstein metric on $M$ such that
$\ga$ is a metric of negative scalar curvature and $s>n/2+1$. Furthermore, let $(g_0,k_0)$ be CMCSH initial data on $M$ such that
\eq{
\Ab{g_0-\ga}_{H^s}+\Ab{\Si}_{H^{s-1}}<\varepsilon
}
with $\varepsilon$ sufficiently small. Then the CMCSH Cauchy problem is strongly locally well-posed in $\mathcal C^k(\mathcal H^s)$, $k=\lfloor s\rfloor$ and the corresponding Lorentz metric $\bar g$ is a vacuum solution of the Einstein equations. The following continuation principle holds.
There exists a $\delta>0$ such that for $[T_0,T_+)$ being the maximal future existence interval to the given initial data at $T_0$ in the rescaled time $T$, then either $[T_0,T_+)=[\arcsinh(1),\infty)$ or  
\eq{
\limsup (\Ab{g-\ga}_{H^s}+\Ab{\Si}_{H^{s-1}})\geq \varepsilon+\delta
}
for $\T\rightarrow \T_+$.

\end{lem}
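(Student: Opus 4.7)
The plan is to adapt the Andersson--Moncrief local well-posedness result \cite{AnMo03} for the vacuum CMCSH Einstein equations to the present setting. The only new features here are the positive cosmological constant contributions in the evolution equation for $\Sigma$ and the smooth $\T$-dependent factors $\cosh(\T)/\sinh(\T)$ and $1/\sinh(\T)$ produced by the rescaling in \eqref{resc-evol-1}--\eqref{resc-evol-2}. These factors are uniformly bounded on any compact subinterval of $[\arcsinh(1), \infty)$, so they enter as benign multiplicative weights at the local-in-time level and do not affect the analytic structure.

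First I would solve the elliptic subsystem for $(N, X)$ in terms of $(g, \Sigma)$. The lapse equation has linear part $\Delta_g - n$, which is an isomorphism $H^{s+1}\to H^{s-1}$ since its zeroth-order coefficient is strictly negative. For the shift, the operator $X \mapsto \Delta_g X + \mathrm{Ric}_g(X)$ reduces at $g = \gamma$ to the operator $P$ of Lemma \ref{Piso}, which is invertible in the negative-Einstein setting: $\gamma$ admits no Killing fields by the Bochner identity, and the eigenvalue condition is automatic because $-2R(\gamma)/n = 2(n-1) > 0$ is nonnegative while $\Delta_\gamma$ has nonpositive spectrum. Openness of the set of isomorphisms then yields invertibility throughout a small $H^s$-neighborhood of $\gamma$, and the implicit function theorem produces a smooth map $(g, \Sigma) \mapsto (N, X) \in H^{s+1}\times H^{s+1}$ with tame estimates.

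Substituting $(N,X)$, the system \eqref{resc-evol-2} becomes an evolution problem for $(g, \Sigma)$ alone. Applying Lemma \ref{lem-ricc-dec} to rewrite $\mathrm{Ric}_{ij}$ converts it into a quasilinear second-order hyperbolic system whose principal part is $\partial_\T^2(g - \gamma) \sim (N/\sinh(\T))\,\mathcal{L}_{g,\gamma}(g - \gamma)$ with smooth, positive coefficient. Local well-posedness in $H^s \times H^{s-1}$ follows from standard energy estimates for symmetric-hyperbolic type systems combined with Sobolev multiplication, which requires $s > n/2 + 1$, implemented through a Picard iteration in which each step uses the elliptic solve above. The continuation principle is obtained by the usual contradiction argument: choose $\delta > 0$ small enough that the local existence time depends only on the ball of radius $\varepsilon + \delta$; if the maximal existence time $T_+$ were finite and $\limsup_{\T \to T_+}(\|g - \gamma\|_{H^s} + \|\Sigma\|_{H^{s-1}}) < \varepsilon + \delta$, the local theorem could be reapplied near $T_+$ to extend the solution, a contradiction. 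The main obstacle I anticipate is bookkeeping the uniform invertibility of the elliptic operators along the iteration and tracking the dependence of constants on the $H^s$ norm of $g - \gamma$; both are handled by fixing $\varepsilon$ small enough so that all operators stay in the open set of isomorphisms produced by the openness argument above.
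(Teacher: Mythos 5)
Your proposal is correct and follows essentially the same route as the paper, which simply invokes the elliptic--hyperbolic methods of Andersson--Moncrief \cite{AnMo03}; your sketch fills in exactly the steps that reference contains (elliptic solve for lapse and shift, quasilinear hyperbolic system for $(g,\Sigma)$ via Lemma \ref{lem-ricc-dec}, standard continuation argument). Your observation that in the negative-Einstein case the operators $\Delta_g-n$ and $\Delta_g X+\ric_g(X)$ are automatically isomorphisms (no Killing fields by Bochner, eigenvalue condition vacuous since the Laplacian has nonpositive spectrum) is precisely why the paper imposes no extra hypotheses here, in contrast to the positive-curvature case.
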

The positive case is a bit more subtle due fact that the elliptic operators for lapse and shift and not necessarily isomorphisms. Recall the definition of $\mathcal B_\varepsilon^s(\ga,0)$ in section \ref{sec:nac}.
\begin{lem} Let $\ga$ be a fixed Einstein metric on $M$ such that
$\ga$ is a metric of positive scalar curvature, $-2(n-1)\notin\mathrm{Spec}(\Delta_{\ga})$ and $\ga$ admits no Killing vector fields and let $s>n/2+1$. Furthermore, let $(g_0,k_0)$ be CMCSH initial data on $M$ such that
\eq{
\Ab{g-\ga}_{H^s}+\Ab{\Si}_{H^{s-1}}<\varepsilon
}
with $\varepsilon$ sufficiently small to assure that the conditions on $\ga$ hold for all $(g,\Si)\in\mathcal B_\varepsilon^s(\ga,0)$. Then
the CMCSH Cauchy problem is strongly locally well-posed in $\mathcal C^k(\mathcal H^s)$, $k=\lfloor s\rfloor$ and the corresponding Lorentz metric $\bar g$ is a vacuum solution of the Einstein equations. The following continuation principle holds. There exists a $\delta>0$ such that for $(\T_-,\T_+)$ being the maximal existence interval to the given initial data at $\T_0$ in rescaled time $\T$, either $(\T_-,\T_+)=(-\infty,\infty)$ or 
\eq{
\limsup (\Ab{g-\ga}_{H^s}+\Ab{\Si}_{H^{s-1}})\geq \varepsilon+\delta
}
for $\T\rightarrow \T_+$ or $\T\rightarrow \T_-$.
\end{lem}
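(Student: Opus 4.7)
The plan is to mirror the Andersson--Moncrief local well-posedness argument for the CMCSH Einstein flow, adapted to the positive curvature, positive cosmological constant setting. The essential new point, relative to the negative curvature lemma just proved, is that the elliptic operators for lapse and shift are no longer automatically isomorphisms, so the hypotheses of Lemma \ref{Piso} must be invoked at $\gamma$ and propagated to a concrete $H^s$-neighbourhood. The rescaled evolution equations for the reversed CMC flow \eqref{resc-evol-1}--\eqref{resc-evol-2} are written as a coupled elliptic-hyperbolic system in $(g-\gamma,\Sigma,N,X)$: at each time slice one solves the linear elliptic equations for $(N,X)$ given $(g,\Sigma)$, and then evolves $(g,\Sigma)$ by a quasilinear system whose principal part is made explicit by Lemma \ref{lem-ricc-dec}, the spatial Ricci tensor being decomposed into the Einstein operator $\mathcal{L}_{g,\gamma}$ acting on $g-\gamma$ plus lower-order error terms controlled in $H^{s-1}$.

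The central task is the elliptic step. At $(g,\Sigma)=(\gamma,0)$ the shift operator coincides with the operator $P$ of Lemma \ref{Piso}, which is an isomorphism precisely under the two hypotheses that $-2(n-1)\notin\mathrm{Spec}(\Delta_\gamma)$ and $\gamma$ admits no Killing vector fields. The lapse operator at the background reduces to $\Delta_\gamma+n$; since the hypotheses exclude the round sphere, the Lichnerowicz--Obata inequality yields $\mathrm{Spec}(\Delta_\gamma)\subset \{0\}\cup(-\infty,-n)$, so this operator is invertible. Both conditions are stable under small $H^s$-perturbations: upper semicontinuity of $\dim\ker L$ rules out the emergence of Killing fields, and standard Kato-type spectral perturbation theory (the coefficients of the relevant elliptic operators depend continuously on $g$ in $H^s$ with $s>n/2+1$) keeps the spectrum away from $-2(n-1)$ and $-n$. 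Hence, for $\varepsilon$ sufficiently small, both elliptic operators are uniformly invertible on $\mathcal{B}_\varepsilon^s(\gamma,0)$, yielding $\|N-N_\gamma\|_{H^{s+1}}+\|X\|_{H^{s+1}}\leq C(\|g-\gamma\|_{H^s}+\|\Sigma\|_{H^{s-1}})$.

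With the elliptic step controlled uniformly, a standard Picard iteration in $\mathcal{C}^0([\T_0,\T_0+\eta],\mathcal{H}^s)\cap \mathcal{C}^1([\T_0,\T_0+\eta],H^{s-1})$ yields local existence and uniqueness, and the algebra property of $H^s$ handles the nonlinear terms. That the resulting Lorentzian metric $\bar g$ satisfies the full Einstein equations with $\Lambda=\frac{n(n-1)}{2}$ follows by the usual constraint-propagation argument: the gauge relations $\tau=t$ and $V^i=0$, together with the Hamiltonian and momentum constraints, evolve as solutions of a homogeneous symmetric-hyperbolic system with vanishing initial data, and the Bianchi identities guarantee uniqueness of this zero solution. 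For the continuation principle I would choose $\delta>0$ so that elliptic invertibility persists on the slightly enlarged ball $\mathcal{B}_{\varepsilon+\delta}^s(\gamma,0)$; the energy and elliptic estimates then provide a uniform lower bound on the time of existence as long as $\|g-\gamma\|_{H^s}+\|\Sigma\|_{H^{s-1}}<\varepsilon+\delta$, and the bootstrap closes to the stated dichotomy in both time directions.

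The principal obstacle is precisely this uniformity of elliptic invertibility. Both the eigenvalue condition on $\Delta_g$ and the absence of Killing vector fields must be shown to persist on a quantitative $H^s$-ball, and the operator norms of the inverses must be uniformly bounded there; the operator $P(g)$ has coefficients depending on $g$ through first derivatives, so one must verify the continuous dependence carefully in the $H^s$-topology. Once this is in hand, the rest of the proof is essentially the same as in the negative curvature case, with the only substantive changes being the signs in the lapse equation and the fact that the rescaled time $\T$ ranges over all of $\R$.
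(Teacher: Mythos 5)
Your proposal is correct and follows essentially the same route as the paper: the paper's own proof simply defers to the Andersson--Moncrief local existence theory and observes that the only new ingredient in the positive-curvature case is that the hypotheses on $\gamma$ (no Killing fields, the eigenvalue condition, and the exclusion of the sphere via Obata) make the lapse and shift operators isomorphisms, a property that persists on a small $H^s$-ball. Your additional detail on spectral stability, Picard iteration, and constraint propagation fleshes out exactly what the paper leaves implicit.
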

\begin{proof}
The lemma for the negative case follows straightforward by the same methods as in \cite{AnMo03}, which would even yield a more general result without the smallness assumptions.\\
In the case of positive curvature in the second lemma one has to assure that the elliptic operators defining the lapse and shift equation are in fact isomorphisms to use the relevant structure of the elliptic system as in lemma 3.2 of \cite{AnMo03}. The conditions we impose on the Einstein metric assure that these operators are isomorphisms, c.f.~section \ref{ssec : 26}. As the perturbations are chosen to be small we can assure that the isomorphism property holds also for the perturbed data as long as we remain in an $\varepsilon$-ball. This justifies the continuation criterion, which automatically covers the case implied by the analysis in \cite{AnMo03}.
\end{proof}

\begin{rem}
The local existence results mentioned above hold under more general conditions on the initial data (cf.~\cite{AnMo03}). We have stated a concise version which covers the case which is needed in the present paper. It is also understood that we choose the bootstrap assumptions in the proof of global existence such that the solution is contained inside the corresponding $\varepsilon$-ball of the corresponding local existence criterion.
\end{rem}

%%%%%%%%%%%%%%%%%
%%%%%%%%%%%%%%%%%
%%%%%%%%%%%%%%%%%
%\subsection{Main Theorem}
%\begin{thm}
%Let $M$ be a smooth compact n-dimensional manifold without boundary and $\gamma$ be an Einstein metric of positive or negative scalar curvature on $M$, which is not the round sphere and $2 R(\gamma)/n\notin \spectrum (-\Delta_\gamma)$. Then for $s>n/2+1$ and $\varepsilon>0$ there exists a $\delta(\varepsilon)>0$ s.t.~for initial data
%\eq{
%(g_0,\Sigma_0)\in \mathcal B^s_{\delta}(\gamma)\times \mathcal B_{\delta}^{s-1}(0)
%}  
%its future development stays $\varepsilon$-close to the initial data, i.e.
%\eq{
%(g(T),\Sigma(T))_{T\in[T_0,\infty)}\subset  \mathcal B^s_{\varepsilon}(\gamma)\times \mathcal B_{\varepsilon}^{s-1}(0)
%}
%and is future and null-geodesically complete. In particular, all corresponding homogeneous solutions are orbitally stable.
%\end{thm}

\subsection{Elliptic system} \label{ssec : ell}
We derive the elliptic estimates for lapse and shift in the case of positive and negative Ricci-curvature of the spatial metric.

\begin{lem}\label{ellipticestimates}
Let $s>n/2+2$ and $(g,\Sigma)\in \mathcal B^s_{\delta_g}(\gamma)\times \mathcal B_{\delta_\Sigma}^{s-1}(0)$ for some sufficiently small $\delta_g,\delta_\Sigma>0$, then

\eq{\alg{
\Ab {N-\frac{1}{n}}_{s}&\leq C(\delta_g,\delta_\Sigma)\Ab{\Sigma}_{s-2}^2,\\
\Ab{N}_{L^\infty}&\leq \frac{1}{n},\\
\Ab{X}_{s}&\leq C(\delta_g,\delta_\Sigma)\Big[\Ab{2\nabla_j{N}{\Sigma}^{ij}-2{N}{\Sigma}^{mn}(\Gamma_{mn}^i-\hat{\Gamma}_{mn}^i)}_{s-2}\\
&\qquad\qquad\qquad+n\cosh(\T)
(1-\frac{2}{n})\Ab{\nabla^i{N}}_{s-2}\Big] .
}}
In combination with the first inequality this implies for $X$,
\eq{\label{Xestimate}\alg{
\Ab{X}_{s}&\leq C(\delta_g,\delta_\Sigma)\Big[\Ab{{\Sigma}}^3_{s-2}+\Ab{N}_{s-2}\Ab{{\Sigma}}_{s-2}\Ab{g-\gamma}_{s-1}\\
&\qquad\qquad\qquad+n\cosh(\T)
(1-\frac{2}{n})\Ab{\Sigma}^2_{s-2}\Big] .
}}
In the case of positive curvature, $\cosh(\T)$ is replaced by $\sinh(\T)$ in the estimate. 
\end{lem}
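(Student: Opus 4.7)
I would treat all three estimates as perturbation arguments around the background configuration $(g,\Sigma,N,X)=(\gamma,0,\tfrac{1}{n},0)$, reducing each equation to an inhomogeneous elliptic equation whose leading operator is a small perturbation of one we already know to be an isomorphism. The two key isomorphism facts are that $\Delta_\gamma-n$ is invertible on functions and that the vector operator $P$ of Lemma~\ref{Piso} is invertible.

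\textbf{Lapse estimate and $L^\infty$ bound.} Setting $u:=N-\tfrac{1}{n}$ and subtracting the background identity $-1+\tfrac{1}{n}\cdot n=0$, the lapse equation rewrites as
\begin{equation*}
(\Delta_g - n - |\Sigma|_g^2)\,u \;=\; \tfrac{1}{n}\,|\Sigma|_g^2 .
\end{equation*}
The operator $\Delta_\gamma-n$ is invertible: in the negative case $-\Delta_\gamma\geq 0$, so $\Delta_\gamma-n$ is strictly negative definite; in the positive case Lichnerowicz-Obata together with the absence of Killing fields forces $\lambda_1(-\Delta_\gamma)>n$. For $(g,\Sigma)$ in a sufficiently small $H^s\times H^{s-1}$ neighbourhood of $(\gamma,0)$ the perturbed operator remains an isomorphism $H^s\to H^{s-2}$, and elliptic regularity together with the Sobolev product estimate $H^{s-2}\cdot H^{s-2}\hookrightarrow H^{s-2}$ (valid since $s-2>n/2$) yields $\|u\|_s\leq C\|\Sigma\|_{s-2}^2$. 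For the $L^\infty$ bound I would apply the maximum principle at a point where $N$ attains its maximum: there $\Delta_g N\leq 0$, so the lapse equation forces $N(|\Sigma|_g^2+n)\leq 1$ and hence $N\leq \tfrac{1}{n}$.

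\textbf{Shift estimate.} The linearisation of the shift operator at $\gamma$ is precisely the operator $P$ of Lemma~\ref{Piso}. That lemma gives invertibility in the positive-curvature case under the stated spectral and Killing-field hypotheses; in the negative case, no Killing fields exist by the Bochner identity, and $-2(n-1)$ cannot lie in the spectrum of $\Delta_\gamma$ because $-\Delta_\gamma\geq 0$, so the same argument applies. For $g$ in a small $H^s$-ball around $\gamma$ the operator $\Delta_g+\mathrm{Ric}(g)$ therefore remains invertible, and the $(\mathcal{L}_X g)^{mn}(\Gamma^i_{mn}-\hat{\Gamma}^i_{mn})$ contribution, being of order $\|g-\gamma\|_{s-1}\cdot\|X\|_s$, can either be absorbed into the left-hand side as a small perturbation or removed by a contraction iteration. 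Elliptic regularity then delivers the first stated bound on $\|X\|_s$. To pass to the refined form \eqref{Xestimate} I would insert the lapse estimate via $\|\nabla N\|_{s-2}\leq \|u\|_s\leq C\|\Sigma\|_{s-2}^2$, bound $\|\nabla_j N\,\Sigma^{ij}\|_{s-2}\leq C\|\nabla N\|_{s-2}\|\Sigma\|_{s-2}\leq C\|\Sigma\|_{s-2}^3$ by Sobolev multiplication, and estimate $\|N\Sigma^{mn}(\Gamma^i_{mn}-\hat{\Gamma}^i_{mn})\|_{s-2}\leq C\|N\|_{s-2}\|\Sigma\|_{s-2}\|g-\gamma\|_{s-1}$ using that $\Gamma-\hat{\Gamma}$ is algebraic in $\nabla_\gamma(g-\gamma)$. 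Combining these produces the three advertised terms. The positive-curvature case is identical modulo replacing $\cosh(\T)$ by $\sinh(\T)$ in the driving term.

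\textbf{Main obstacle.} The only delicate point is arranging uniform invertibility of the two elliptic operators across all admissible $(g,\Sigma)$; for the shift this rests on Lemma~\ref{Piso} together with continuous dependence of the Fredholm inverse on the coefficients, which forces $\delta_g,\delta_\Sigma$ to be chosen small enough to keep the geometry inside the open set of metrics for which both $\Delta_g-n$ and $\Delta_g+\mathrm{Ric}(g)$ are isomorphisms. Once this is secured the remainder of the argument is routine bookkeeping with Sobolev product inequalities in the regime $s>n/2+2$.
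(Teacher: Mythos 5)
Your proof is correct and follows essentially the same route as the paper's: the maximum principle at an interior maximum gives $\|N\|_{L^\infty}\le 1/n$, elliptic regularity for a small perturbation of $\Delta_\gamma-n$ (the paper simply moves $N|\Sigma|_g^2$ to the right-hand side and uses the pointwise bound on $N$, rather than keeping $|\Sigma|_g^2$ inside the operator) gives the lapse estimate, and elliptic regularity for $\Delta_g+\mathrm{Ric}$ with absorption of the small term $(\mathcal{L}_Xg)^{mn}(\Gamma^i_{mn}-\hat\Gamma^i_{mn})$ into the left-hand side gives the shift estimate, after which \eqref{Xestimate} is Sobolev multiplication exactly as you describe. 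One small correction: in the negative-curvature case the quantity that must avoid $\mathrm{spec}(\Delta_\gamma)$ in Lemma~\ref{Piso} is $-\tfrac{2}{n}R(\gamma)=+2(n-1)>0$, which is automatically excluded because the eigenvalues of $\Delta_\gamma$ are nonpositive, whereas the number $-2(n-1)$ you cite is the one relevant to the positive case and is not excluded by sign considerations alone.
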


\begin{proof} We prove the estimates in the case of negative Ricci curvature. The positive case is analogous.\\
The lapse equation, in the negative case, reads
\eq{
\Delta_{{g}}{N}=-1+{N}(|{\Sigma}|_{{g}}^2+n).
}
The maximum principle immediately yields the second estimate. Rewriting the lapse equation, we obtain
\eq{
\Delta_g(N-\frac{1}{n})-n(N-\frac{1}{n})=N\abg{\Sigma}^2,
}
which in combination with the point wise estimate on $N$ and elliptic regularity for the operator $\Delta_g-n$ yields the first estimate.\\

\noindent Finally, we consider the estimate for the shift vector. We write the equation for the shift in the form
\eq{\alg{
\Delta_{{g}}{X}^i+{R}^i_m{X}^m&=F_X+(\mathcal{L}_{{X}}{g})^{mn}
(\Gamma_{mn}^i-\hat{\Gamma}_{mn}^i),
}}
where
\eq{
F_X\equiv 2\nabla_j{N}{\Sigma}^{ij}-n\cosh(\T)
(1-\frac{2}{n})\nabla^i{N}-2{N}{\Sigma}^{mn}(\Gamma_{mn}^i-\hat{\Gamma}_{mn}^i).
}
Elliptic regularity applied to the equation for the shift then implies
\eq{
\Ab{X}_{H^{s}}\leq C(\delta_g)\Big[\Ab{F_X}_{H^{s-2}}+\Ab{(\mathcal{L}_{{X}}{g})^{mn}
(\Gamma_{mn}^i-\hat{\Gamma}_{mn}^i)}_{H^{s-2}}\Big].
}
Using the smallness of $g-\gamma$ we estimate
\eq{
\Ab{(\mathcal{L}_{{X}}{g})^{mn}
(\Gamma_{mn}^i-\hat{\Gamma}_{mn}^i)}_{H^{s-2}}\leq C(\delta_g)\Ab{X}_{H^{s-1}}\Ab{g-\gamma}_{H^{s-1}}.
}
By choosing the $\delta_g$ sufficiently small we can estimate the RHS by $\frac{1}{2}\Ab{X}_{H^{s-1}}$ and absorb it into the LHS above yielding
\eq{
\Ab{X}_{H^{s}}\leq 2C(\delta_g)\Ab{F_X}_{H^{s-2}}.
}
This finishes the proof.
\end{proof}

\subsection{Energy estimate} \label{ssec : enest}
We restrict in the remainder to the case of negative curvature, the positive case is analogous. Before defining the total energy of the system, we cast the evolution equations into a form where the terms are ordered according to their eventual asymptotic behavior. This reads as follows.
\eq{\label{evolution}\alg{
\partial_\T{g}_{ij}=&-\frac{n}{\sinh(\T)}2 N{\Sigma}_{ij}+\frac{\cosh(\T)}{\sinh(\T)}A
+\frac{n}{\sinh(\T)}B, \\
\partial_\T{\Sigma}_{ij}=&-\frac{\cosh(\T)}{\sinh(\T)}(n-1){\Sigma}_{ij}+\frac{n}{\sinh(\T)} N(-\frac12\Delta_{g,\gamma}(g-\gamma)-\mathring{R}_\gamma(g-\gamma))\\
&+\frac{1}{\sinh(\T)}(\mathcal{L}_{{X}}{\Sigma}_{ij})+\frac{n}{\sinh(\T)}C+\frac{\cosh(\T)}{\sinh(\T)}D.
}}

\begin{lem}
Let $s>\frac n2+1$ and $(g,\Sigma)\in \mathcal B^s_{\delta_g}(\gamma)\times \mathcal B_{\delta_\Sigma}^{s-1}(0)$ for some $\delta_g,\delta_\Sigma>0$ sufficiently small. Then for the perturbation terms, the following estimates hold.
\eq{\label{eq-resc-ev}\alg{
\Ab{A}_s&\leq C(\delta_g,\delta_\Sigma)\Big[ \Ab{\Sigma}_{s-2}^2(\Ab{g-\gamma}_{s}+\Ab{\gamma}_s)\Big]\\
\Ab{B}_s&\leq2\Ab{\nabla X}_s\\
\Ab{C}_s& \leq C(\delta_g,\delta_\Sigma)\Big[\Ab{\Sigma}_s^2+\Ab{N}_s\Ab{\Sigma}^2_s+\Ab{\Sigma}_s^2(\Ab{g-\gamma}_{s}+\Ab{\gamma}_s)+\Ab{N}_s\Ab{g-\gamma}^2_{s+1}\Big]\\
\Ab{D}_s&\leq C(\delta_g,\delta_\Sigma)\Big[\Ab{\Sigma}_{s-2}^2\Ab{\Sigma}_{s}\Big]
}}
\end{lem}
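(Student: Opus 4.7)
My plan is to identify the perturbation terms $A,B,C,D$ explicitly by matching the reorganized system \eqref{evolution} term by term against the rescaled CMC flow \eqref{resc-evol-2}, and then to estimate each of them using the Sobolev product inequality $H^s\cdot H^s\hookrightarrow H^s$ (valid since $s>n/2$) together with the elliptic bounds of Lemma~\ref{ellipticestimates} and the Ricci decomposition of Lemma~\ref{lem-ricc-dec}. Reading off the coefficients of $\frac{\cosh(\T)}{\sinh(\T)}$ and $\frac{n}{\sinh(\T)}$ in the $g$-equation gives $A=2n(N-\tfrac1n)g_{ij}$ and $B=\mathcal{L}_X g_{ij}$. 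For the $\Sigma$-equation I invoke Lemma~\ref{lem-ricc-dec}, noting that in CMCSH gauge $V\equiv 0$ forces $\delta_{ij}=0$ and that $R(\gamma)/n=-(n-1)$, so that
\begin{equation*}
R_{ij}+n g_{ij}=g_{ij}-\tfrac12\Delta_{g,\gamma}(g-\gamma)_{ij}-\mathring{R}_\gamma(g-\gamma)_{ij}+J_{ij}.
\end{equation*}
Peeling off the principal elliptic expression and collecting the residual contributions yields $C=(N-\tfrac1n)g_{ij}-\nabla_i\nabla_j N+N J_{ij}-2N\Sigma_{ik}\Sigma_j^{\,k}$ and $D=(n-2)(1-nN)\Sigma_{ij}$.

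The estimates on $A$ and $D$ are then immediate from the lapse bound $\Ab{N-\tfrac1n}_s\leq C\Ab{\Sigma}_{s-2}^2$ of Lemma~\ref{ellipticestimates} combined with Sobolev multiplication, writing $A=-2(1-nN)g$ and $D=(n-2)(1-nN)\Sigma$ as products of a small factor and a tensor with a well-controlled norm. The estimate for $B$ follows at once from $(\mathcal{L}_X g)_{ij}=\nabla_i X_j+\nabla_j X_i$. For $C$ the four summands map directly to the four terms on the right-hand side: $(N-\tfrac1n)g$ produces $\Ab{\Sigma}_s^2(\Ab{g-\gamma}_s+\Ab{\gamma}_s)$, $N\Sigma^2$ produces $\Ab{N}_s\Ab{\Sigma}_s^2$, and $NJ$ produces $\Ab{N}_s\Ab{g-\gamma}_{s+1}^2$ via the quadratic bound \eqref{est-J}, where one bootstraps the regularity of $J$ by one level.

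The delicate point is the Hessian $\nabla_i\nabla_j N$, whose $H^s$-norm demands control of $N-\tfrac1n$ in $H^{s+2}$. This is obtained by bootstrapping the lapse equation $(\Delta_g-n)(N-\tfrac1n)=N|\Sigma|_g^2$ to higher order: since $\Delta_g-n$ is invertible in the perturbative regime (cf.~Lemma~\ref{ellipticestimates}), elliptic regularity at level $s+2$ yields $\Ab{N-\tfrac1n}_{s+2}\leq C\Ab{N|\Sigma|_g^2}_s\leq C\Ab{\Sigma}_s^2$, which accounts for the lone $\Ab{\Sigma}_s^2$ summand in the $C$-estimate and is precisely why Theorems~\ref{main-thm-neg} and~\ref{main-thm} are formulated with two additional levels of initial regularity ($s'>n/2+s$). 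The positive-curvature case is treated identically after interchanging $\cosh(\T)\leftrightarrow\sinh(\T)$, adjusting the sign of the constant source in the lapse equation, and using $R(\gamma)/n=n-1$ in the Ricci decomposition; the algebraic structure of the nonlinearities is unchanged, so the estimates carry over verbatim.
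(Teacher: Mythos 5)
Your proposal is correct and follows essentially the same route as the paper: the explicit forms you derive for $A$, $B$, $C$, $D$ coincide with the paper's \eqref{ABCD}, and the estimates then follow from Lemma \ref{ellipticestimates}, the bound \eqref{est-J}, and Sobolev multiplication exactly as you describe. In fact you are more explicit than the paper on the one genuinely delicate point, namely that $\Ab{\nabla^2 N}_s$ requires bootstrapping the lapse equation to $H^{s+2}$ regularity to produce the lone $\Ab{\Sigma}_s^2$ term in the $C$-estimate, which the paper leaves implicit.
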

\begin{proof}
The perturbation terms have the explicit form
\eq{\label{ABCD}\alg{
A&=-2(1-nN)(g_{ij}-\gamma_{ij}+\gamma_{ij})\\
B&=\mathcal L_X g_{ij}=\nabla_i X_j+\nabla_jX_i\\
C&=-\nabla_i\nabla_j{N}-2 N{\Sigma}_{ik}{\Sigma}^k_j+(N-\frac{1}{n})({g}_{ij}-\gamma_{ij}+\gamma_{ij})+N J_{ij}\\
D&=-(n-2)(Nn-1){\Sigma}_{ij}.
}}
Note that $J$ is the perturbation term given in lemma \ref{lem-ricc-dec} with the corresponding estimate \eqref{est-J}. Now, the lemma follows from the estimates from lemma \ref{ellipticestimates}, .
\end{proof}
 
\noindent We define the main total energy.
 \begin{defn} Let $s>n/2+1$, then we denote
 \eq{\alg{
 \mathbf E_s(g,\Sigma)&\equiv \Ab{g-\gamma}_{L^2(\gamma)}^2+\sum_{k=0}^{s-1}(-1)^k(\Sigma,\Delta^k_{g,\gamma}\Sigma)_{L^2(g,\ga)}+\frac14\sum_{k=1}^s(-1)^k(g-\gamma,\Delta^k_{g,\gamma}(g-\gamma))_{L^2(g,\ga)}\\
 &\equiv \mathrm{I}+\mathrm{I\!I}+\mathrm{I\!I\!I}.}}
\end{defn}
\begin{rem}Note that the energy is equivalent to the $H^s\times H^{s-1}$-norm of $(g,\Sigma)$.
\end{rem}
\begin{lem} \label{lem : en-est}Let $s>n/2+1$ and $(g,\Sigma)\in H^s\times H^{s-1}$ be a solution to \eqref{resc-evol-1}--\eqref{resc-evol-2}. Then there exists an $\varepsilon>0$ such that for
\eq{
(g,\Si)\in\mcl B^s_{\varepsilon}(\ga)\times\mcl B^{s-1}_{\varepsilon}(0),
}
the estimate 
\eq{\label{en-est}
\partial_{\mathrm T} \mathbf E_s(g,\Sigma)\leq \frac{C(\varepsilon)}{\sinh (\mathrm T)}\mathbf E_s(g,\Sigma)
}
holds in the case of negative curvature of $\ga$. The analogous estimate with $\sinh(\T)$ replaced by $\cosh(\T)$ holds in the case of positive curvature of $\ga$.
\end{lem}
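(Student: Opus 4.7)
The plan is to differentiate $\mathbf E_s$ in $\mathrm T$, substitute the evolution equations \eqref{evolution}, and close the estimate by sorting the resulting terms according to the asymptotic hierarchy of coefficients ($1/\sinh(\mathrm T)$ versus $\coth(\mathrm T)$). Beyond the direct pairings $(\partial_{\mathrm T}\Sigma,\Delta_{g,\gamma}^k\Sigma)_{L^2(g,\gamma)}$ and $(\partial_{\mathrm T}(g-\gamma),\Delta_{g,\gamma}^k(g-\gamma))_{L^2(g,\gamma)}$, I will also pick up commutator corrections from the $g$-dependence of $\Delta_{g,\gamma}$ and from $\partial_{\mathrm T}\mu_g$; but since $\|\partial_{\mathrm T}g\|_{C^{s-1}}=\mathcal O(1/\sinh(\mathrm T))$ after applying Lemma~\ref{ellipticestimates} for $X$ and $N-\tfrac1n$, these will contribute only $\tfrac{C(\varepsilon)}{\sinh(\mathrm T)}\mathbf E_s$ and can be discarded harmlessly. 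I treat only the negative-curvature case explicitly; the positive-curvature case follows verbatim with $\sinh$ and $\cosh$ interchanged.

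The two structural observations behind the estimate are as follows. First, the linear term $-(n-1)\coth(\mathrm T)\Sigma$ in $\partial_{\mathrm T}\Sigma$ will contribute to $\partial_{\mathrm T}\mathrm{I\!I}$ the single negative-definite piece $-2(n-1)\coth(\mathrm T)\cdot\mathrm{I\!I}$, which serves as the absorbing reservoir mentioned in the introduction. Second, the highest-derivative coupling between $g-\gamma$ and $\Sigma$ enters symmetrically through the Einstein-operator term $\tfrac{nN}{2\sinh(\mathrm T)}\mathcal L_{g,\gamma}(g-\gamma)$ of $\partial_{\mathrm T}\Sigma$ and the $-\tfrac{2nN}{\sinh(\mathrm T)}\Sigma$ term of $\partial_{\mathrm T}g$. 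Exploiting the self-adjointness of $\Delta_{g,\gamma}$ with respect to $(\cdot,\cdot)_{L^2(g,\gamma)}$, reindexing the alternating sums in $\mathrm{I\!I}$ and $\mathrm{I\!I\!I}$, and substituting $N=\tfrac1n+\mathcal O(\|\Sigma\|_{s-2}^2)$ via Lemma~\ref{ellipticestimates}, I expect the principal parts of these cross contributions to combine into a controlled residual of size $\tfrac{C}{\sinh(\mathrm T)}\|\Sigma\|_{s-1}\|g-\gamma\|_s\leq\tfrac{C}{\sinh(\mathrm T)}\mathbf E_s$; the $\mathring R_\gamma$-piece of $\mathcal L_{g,\gamma}$ is bounded by Lemma~\ref{lem-ricc-dec} and contributes similarly.

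The remaining input will come from the perturbation terms $A,B,C,D$ of \eqref{ABCD}, together with the shift-related piece $\mathcal L_X\Sigma$ and the $\nabla_i\nabla_jN$ piece. Using the estimates in \eqref{eq-resc-ev} and Lemma~\ref{ellipticestimates}, the $\tfrac{n}{\sinh(\mathrm T)}$-weighted pieces (from $B$, $C$, $\mathcal L_X\Sigma$, and $\nabla_i\nabla_jN$) will be bounded directly by $\tfrac{C(\varepsilon)}{\sinh(\mathrm T)}\mathbf E_s$. The $\coth(\mathrm T)$-weighted pieces (from $A$ and $D$) are more delicate, but thanks to the structural fact $1-nN=\mathcal O(\|\Sigma\|_{s-2}^2)$ built into $A$ through the lapse equation and the cubic-in-$\Sigma$ form of $D$, each such piece will be seen to be bounded by $C\varepsilon\coth(\mathrm T)\mathbf E_s$ after Cauchy--Schwarz. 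Fixing $\varepsilon$ small enough that $C\varepsilon<2(n-1)$, these terms will be absorbed into the negative reservoir $-2(n-1)\coth(\mathrm T)\cdot\mathrm{I\!I}$, with only a residual of order $\tfrac{C(\varepsilon)}{\sinh(\mathrm T)}\mathbf E_s$ left over.

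The main obstacle will be exactly this $\coth(\mathrm T)$-absorption step. I will have to verify, by inspecting the pointwise structure of $A$ and $D$ carefully, that every error term carrying the slowly decaying weight $\coth(\mathrm T)$ really is of quadratic type in $(\|\Sigma\|_{s-1},\|g-\gamma\|_s)$ with an additional small prefactor, and that after Cauchy--Schwarz its $\|\Sigma\|_{s-1}^2$-component can be bounded by a small multiple of $\coth(\mathrm T)\mathrm{I\!I}$, while any residual $\|g-\gamma\|_s^2$-component either absorbs into the reservoir as well or inherits an extra $1/\sinh(\mathrm T)$ factor from the structural identity $1-nN=-\Delta N+N|\Sigma|^2$. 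The smallness hypothesis is essential here: without it, one would only control $\partial_{\mathrm T}\mathbf E_s$ by $C\coth(\mathrm T)\mathbf E_s$, which is bounded but not integrable and would fail to power the bootstrap argument toward global existence.
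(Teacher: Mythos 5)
Your proposal follows the paper's proof essentially verbatim: the negative reservoir $-2(n-1)\tfrac{\cosh(\mathrm T)}{\sinh(\mathrm T)}\,\mathrm{I\!I}$ coming from the linear term in $\partial_{\mathrm T}\Sigma$, the cancellation of the top-order $\Delta_{g,\gamma}(g-\gamma)$/$\Sigma$ cross terms between $\mathrm{I\!I}$ and $\mathrm{I\!I\!I}$ by self-adjointness and integration by parts (leaving a commutator with $N$ of size $\tfrac{C}{\sinh(\mathrm T)}\mathbf E_s$), and the absorption of every $\cosh(\mathrm T)/\sinh(\mathrm T)$-weighted error term — all of which carry a factor $\|\Sigma\|^2$ with a small prefactor, via $1-nN=\mathcal O(\|\Sigma\|_{s-2}^2)$ from the lapse equation and the cubic structure of $D$ — into that reservoir are exactly the paper's argument. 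The one point of care is that absorption must be into the $\Sigma$-part $\mathrm{I\!I}$ alone rather than into $\mathbf E_s$ as one of your intermediate bounds suggests, but your closing paragraph already identifies and correctly resolves this.
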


\begin{proof}
We consider the negative curvature case for the proof. The case of positive curvature is analogous. \\
\noindent We take the time derivatives of the three individual terms. 
\eq{\alg{
\partial_{\T}\Ab{g-\gamma}_{L^2(\gamma)}^2&=2\int_M\langle\partial_\T g, (g-\gamma)\rangle_\gamma\mu_\gamma\\
&=2\int_M\langle-\frac{n}{\sinh(\T)}2 N{\Sigma}+\frac{\cosh(\T)}{\sinh(\T)}A
+\frac{n}{\sinh(\T)}B, (g-\gamma)\rangle_\gamma\mu_\gamma\\
&=-\frac{4n}{\sinh(\T)}\int_MN\langle {\Sigma}, (g-\gamma)\rangle_\gamma\mu_\gamma+2\frac{\cosh(\T)}{\sinh(\T)}\int_M\langle A, (g-\gamma)\rangle_\gamma\mu_\gamma\\
&\quad+\frac{2n}{\sinh(\T)}\int_M\langle B, (g-\gamma)\rangle_\gamma\mu_\gamma,
}}
where $A$ and $B$ are given in \eqref{ABCD}.
In particular, we have an estimate of the form

\eq{\alg{
|\partial_{\T}\Ab{g-\gamma}_{L^2(\gamma)}^2|&\leq C\Big[\frac{1}{\sinh(\T)}\Ab{N}_{L^\infty}\mathbf E_{s}(g,\Sigma)+\frac{\cosh(\T)}{\sinh(\T)}\Ab{A}_{L^2}\sqrt{\mathbf E_{s}(g,\Sigma)}\\
&\qquad\qquad+\frac{1}{\sinh(\T)}\Ab{B}_{L^2}\sqrt{\mathbf E_{s}(g,\Sigma)}\Big]
}}
We order the terms according to their appearance in the energy estimate, substitute the expressions for $A$ and $B$.
\eq{\alg{
|\partial_{\T}\Ab{g-\gamma}_{L^2(\gamma)}^2|&\leq C\Big[\frac{1}{\sinh(\T)}n\mathbf E_{s}(g,\Sigma)\\
&+C(\delta_g,\delta_\Sigma)\frac{\cosh(\T)}{\sinh(\T)}\Big[ \Ab{\Sigma}_{L^2}^2(\Ab{g-\gamma}_{s}+\Ab{\gamma}_s)\Big]\sqrt{\mathbf E_{s}(g,\Sigma)}\\
&\qquad\qquad+\frac{2}{\sinh(\T)} \Bigg(C(\delta_g,\delta_\Sigma)\Big[\Ab{{\Sigma}}^3_{s-2}+\Ab{N}_{s-2}\Ab{{\Sigma}}_{s-2}\Ab{g-\gamma}_{s-1}\\
&\qquad\qquad\qquad+\cosh(\T)(1-\frac{2}{n})
(\frac{2}{n}-1)\Ab{\Sigma}^2_{s-3}\Big]\Bigg)\sqrt{\mathbf E_{s}(g,\Sigma)}\Big]
}}
We sort the terms on the RHS into two different categories, where $C$ is a new constant and we make use of the fact that 
$\mathbf E_{s}(g,\Sigma)\leq 1$. Then we obtain
\eq{\label{pt I}\alg{
|\partial_{\T}\Ab{g-\gamma}_{L^2(\gamma)}^2|&\leq \frac{C}{\sinh(\T)}n\mathbf E_{s}(g,\Sigma)+C\frac{\cosh(\T)}{\sinh(\T)}\Ab{\Sigma}_{L^2}^2\sqrt{\mathbf E_{s}(g,\Sigma)}\\
&\qquad\qquad+C\frac{\cosh(\T)}{\sinh(\T)} 
\Ab{\Sigma}^2_{s-3}\sqrt{\mathbf E_{s}(g,\Sigma)}.
}}

\begin{rem}
The two terms on the RHS contribute either to the estimate to be proven or will be absorbed by a large negative term as demonstrated further below.
\end{rem}

\noindent We proceed with the evaluation of the next time derivative.
\eq{\label{dt II}\alg{
&\partial_{\T}\sum_{k=0}^{s-1}(-1)^k( \Sigma,\Delta^k_{g,\gamma}\Sigma)_{L^2(g,\ga)}\\
&=\sum_{k=0}^{s-1}(-1)^k\Bigg\{\int_M 2\langle\partial_T\Sigma,\Delta_{g,\gamma}^k\Sigma\rangle+\langle\Sigma,[\partial_\T,\Delta_{g,\gamma}^k]\Sigma\rangle\mu_g+\int_M\langle\Sigma,\Delta_{g,\gamma}^k\Sigma\rangle\partial_\T\mu_{g}\Bigg\}
}}
\eq{\nonumber\alg{
&=\sum_{k=0}^{s-1}(-1)^k\Bigg\{-2\frac{\cosh(\T)}{\sinh(\T)}(n-1)\int_M \langle{\Sigma},\Delta_{g,\gamma}^k\Sigma\rangle \mu_{g}\\
&\qquad\qquad\qquad\qquad-\frac{1}{\sinh(\T)}\int_M \langle N(\Delta_{g,\gamma}(g-\gamma)),\Delta_{g,\gamma}^k\Sigma\rangle \mu_{g}\\
&\qquad\qquad-\frac{2}{\sinh(\T)}\int_M \langle N( \mathring{R}_\gamma(g-\gamma)),\Delta_{g,\gamma}^k\Sigma\rangle \mu_{g}+\frac{2}{\sinh(\T)}\int_M \langle(\mathcal{L}_{{X}}{\Sigma}),\Delta_{g,\gamma}^k\Sigma\rangle \mu_{g}\\
&\qquad\qquad\qquad\qquad+\frac{2}{\sinh(\T)}\int_M \langle C,\Delta_{g,\gamma}^k\Sigma\rangle \mu_{g}+2\frac{\cosh(\T)}{\sinh(\T)}\int_M \langle D,\Delta_{g,\gamma}^k\Sigma\rangle \mu_{g}\\
&\qquad\qquad\qquad+\int_M\langle\Sigma,[\partial_\T,\Delta_{g,\gamma}^k]\Sigma\rangle\mu_{g}+\int_M\langle\Sigma,\Delta_{g,\gamma}^k\Sigma\rangle\partial_\T\mu_{g}\Bigg\}
}}
Before we further evaluate the previous term, we compute the time derivative of the last term of the energy and evaluate both terms in combination.\\

\noindent The time derivative of the last term reads
\eq{\label{dt III}\alg{
&\partial_{\T}\frac14\sum_{k=1}^s(-1)^k\langle g-\gamma,\Delta^k_{g,\gamma}(g-\gamma)\rangle_{L^2(g,\ga)}\\
&=\frac14\sum_{k=1}^{s}(-1)^k\Bigg\{\int_M 2\langle\partial_T(g-\gamma),\Delta_{g,\gamma}^k(g-\gamma)\rangle+\langle g-\gamma,[\partial_\T,\Delta_{g,\gamma}^k](g-\gamma)\rangle\mu_g\\
&\qquad\qquad\qquad+\int_M\langle(g-\gamma),\Delta_{g,\gamma}^k(g-\gamma)\rangle\partial_\T\mu_{g}\Bigg\}\\
&=\frac14\sum_{k=1}^{s}(-1)^k\Bigg\{-\frac{4}{\sinh(\T)}\int_M N\langle \Sigma,\Delta_{g,\gamma}^k(g-\gamma)\rangle\mu_g+2\frac{\cosh(\T)}{\sinh(\T)}\int_M \langle A,\Delta_{g,\gamma}^k(g-\gamma)\rangle\mu_g\\
&\qquad\qquad\qquad+\frac{2}{\sinh(\T)}\int_M \langle B,\Delta_{g,\gamma}^k(g-\gamma)\rangle\mu_g+\int_M\langle g-\gamma,[\partial_\T,\Delta_{g,\gamma}^k](g-\gamma)\rangle\mu_g\\
&\qquad\qquad\qquad+\int_M\langle(g-\gamma),\Delta_{g,\gamma}^k(g-\gamma)\rangle\partial_\T\mu_{g}\Bigg\}.
}}
In both previous computations there are commutator terms arising. We do an intermediate discussion of those terms in the following. The commutator operator can also be written as
\eq{\label{commutator}[\partial_\T ,\Delta_{g,\gamma}^k](h)=(\partial_\T\Delta_{g,\gamma}^k)(h)=\sum_{l=0}^{k-1}(\Delta_{g,\gamma}^l\circ (\partial_\T\Delta_{g,\gamma})\circ\Delta_{g,\gamma}^{l-k-1})(h)}
for some 2-tensor $h$. 
Recall that the Laplacian appearing here has the local formula
%\eq{\Delta_{g,\gamma}=-(\nabla[\gamma])^*\nabla[\gamma],}
%where $\nabla[\gamma]$ is the connection of the metric $\gamma$ but the adjoint
%$\nabla[\gamma]^*$ is taken with respect to the $L^2$-norm induced by $g$.
%The local formula of this operator is
\eq{\Delta_{g,\gamma}h_{ij}=\frac{1}{\mu_g}\nabla[\gamma]_m(g^{mn}\mu_g\nabla[\gamma]_n h_{ij})}
This shows that the variation of the operator with respect to the metric can be written schematically as
\eq{\label{schema}\alg{(\partial_\T\Delta_{g,\gamma})(h)=&
\partial_\T g *\Delta_{g,\gamma}h+\frac{1}{\mu_g}\nabla[\gamma]_m(\mu_g\partial_\T g*\nabla[\gamma]_n h)\\
=&\partial_\T g*\nabla^2[\gamma]h+\partial_\T g*\nabla[\gamma]g*\nabla[\gamma]h
+\nabla[\gamma]\partial_\T g*\nabla[\gamma]h}}
%Schematically, we can also write this operator as
%\eq{\Delta_{g,\gamma}(h)=\nabla^2*h+G*\nabla h+\nabla G*h+G*h}
where $*$ is Hamilton's notation of a combination of tensor products with contractions with respect to $g$.
% and $G$ is the tensor $G_{ij}^k=\Gamma[g]_{ij}^k-\Gamma[\gamma]_{ij}^k$.
%The time derivative can be written as
%\eq{\label{schema}\alg{(\partial_T\Delta_{g,\gamma})(h)&=\nabla^2h*\partial_T g
%+\nabla h*\nabla\partial_T g
%+h*\nabla^2\partial_T g+\nabla h*G*\partial_T g\\
%&\qquad\qquad\qquad+h*\nabla G*\partial_T g+G*h*\nabla\partial_Tg+\nabla \partial_t g*h+G*h*\partial_T g
%}}
Therefore, by \eqref{commutator} and \eqref{schema} and by suitable integration by parts, one can see that
\eq{\label{comm 1}\alg{
|\sum_{k=0}^{s-1}(-1)^k\int_M\langle\Sigma,[\partial_\T,\Delta_{g,\gamma}^k]\Sigma\rangle\mu_{g}|
\leq C\left\|\Sigma\right\|^2_{H^{s-1}(g)}\left\|\partial_\T g\right\|_{H^{s-2}(g)}
}}
and similarly,
\eq{\label{comm 2}\alg{
|\sum_{k=1}^{s}(-1)^k\int_M\langle g-\gamma,[\partial_\T,\Delta_{g,\gamma}^k](g-\gamma)\rangle\mu_{g}|
\leq C\left\|g-\gamma\right\|^2_{H^{s}(g)}\left\|\partial_\T g\right\|_{H^{s-1}(g)}
}}
and for $\partial_\T g$, we have good estimates which make this terms to be of higher order.\\

\noindent Before we continue, we note the following estimate for the norm of the time derivative of $g$, which follows straightforward from the previous estimates. We have
\eq{\label{est-dtg}\alg{
\Abkg{\p \T g}{s-1}&\leq C\Bigg[\frac{1}{\sinh(\T)}\Abkg{N}{s-1}\Abkg{\Si}{s-1}\\
&\qquad+\frac{\cosh(\T)}{\sinh(\T)}\Abkg{\Si}{s-1}^2\Big(1+\Abkg{g-\ga}s\Big)\\
&\qquad+\frac{1}{\sinh(\T)}\Big(\Abkg{\Si}{s-1}^3+\Abkg{N}{s-1}\Abkg{\Si}{s-1}\Abkg{g-\ga}{s-1} \Big)\Bigg].
}}
\noindent We have evaluated both commutator terms arising in \eqref{dt II} and \eqref{dt III} and proceed by combining the terms on the corresponding RHS into two different classes. We rearrange the terms.

\eq{\label{pt II+III}\alg{
\partial_\T (\mathrm{I\!I}+\mathrm{I\!I\!I})=&-2\frac{\cosh(\T)}{\sinh(\T)}(n-1)\sum_{k=0}^{s-1}(-1)^k\int_M \langle{\Sigma},\Delta_{g,\gamma}^k\Sigma\rangle \mu_{g}\\
&-\sum_{k=0}^{s-1}(-1)^k\frac{2}{\sinh(\T)}\int_M \langle N(\Delta_{g,\gamma}(g-\gamma)),\Delta_{g,\gamma}^k\Sigma\rangle \mu_{g}\\
&-\sum_{k=1}^{s}(-1)^k\frac{2}{\sinh(\T)}\int_M N\langle \Sigma,\Delta_{g,\gamma}^k(g-\gamma)\rangle\mu_g\\
&+\sum_{k=0}^{s-1}(-1)^k\Bigg\{-\frac{2}{\sinh(\T)}\int_M \langle N(2\mathring{R}_\gamma(g-\gamma)),\Delta_{g,\gamma}^k\Sigma\rangle \mu_{g}\\
&\qquad\qquad\qquad+\frac{2}{\sinh(\T)}\int_M \langle(\mathcal{L}_{{X}}{\Sigma}),\Delta_{g,\gamma}^k\Sigma\rangle \mu_{g}\\
&\qquad\qquad\qquad+\frac{2}{\sinh(\T)}\int_M \langle C,\Delta_{g,\gamma}^k\Sigma\rangle \mu_{g}\\
&\qquad\qquad\qquad+2\frac{\cosh(\T)}{\sinh(\T)}\int_M \langle D,\Delta_{g,\gamma}^k\Sigma\rangle \mu_{g}\\
&\qquad\qquad\qquad+\int_M\langle\Sigma,[\partial_\T,\Delta_{g,\gamma}^k]\Sigma\rangle\mu_{g}+\int_M\langle\Sigma,\Delta_{g,\gamma}^k\Sigma\rangle\partial_\T\mu_{g}\Bigg\}\\
&+\frac14\sum_{k=1}^{s}(-1)^k\Bigg\{2\frac{\cosh(\T)}{\sinh(\T)}\int_M \langle A,\Delta_{g,\gamma}^k(g-\gamma)\rangle\mu_g\\
&\qquad\qquad\qquad\qquad+\frac{2}{\sinh(\T)}\int_M \langle B,\Delta_{g,\gamma}^k(g-\gamma)\rangle\mu_g\\
&\qquad\qquad\qquad\qquad+\int_M\langle g-\gamma,[\partial_\T,\Delta_{g,\gamma}^k](g-\gamma)\rangle\mu_g\\
&\qquad\qquad\qquad\qquad+\int_M\langle(g-\gamma),\Delta_{g,\gamma}^k(g-\gamma)\rangle\partial_\T\mu_{g}\Bigg\}\\
\leq&-\underbrace{2\frac{\cosh(\T)}{\sinh(\T)}(n-1)\sum_{k=0}^{s-1}(-1)^k\int_M \langle{\Sigma},\Delta_{g,\gamma}^k\Sigma\rangle \mu_{g}}_{(*)} +\underbrace{C\sqrt{\mathbf E_{s}(g,\Sigma)}\Ab{\Sigma}^2_{H^{s-1}}}_{(**)}\\
&\quad+\frac{C}{\sinh (\T)}\mathbf E_{s}(g,\Sigma)
}}
We still have to justify the last inequality. Therefore we have to analyze all terms on the right hand side above and estimate them by one of the three terms given in the last two lines of \eqref{pt II+III}. Before we do this, we first argue, why this estimate implies the result. The first term on the right hand side, ($*$), has a negative sign and its absolut value bounds the $H^{s-1}$-norm of $\Sigma$ up to a multiplicative positive constant from above. Therefore choosing $\sqrt{\mathbf E_{s}(g,\Sigma)}$ sufficiently small by choosing the $g$ close to $\gamma$ we can ensure that the second term, $(**)$, is always bounded from above by the absolut value of the first term and thereby the sum of both terms is negative and can be estimated from above by 0 yielding the desired estimate.\\
To complete the proof we need to justify the last estimate in \eqref{pt II+III}. We proceed term by term.\\

\noindent The second and third line of the RHS of \eqref{pt II+III} contain leading terms of too high regularity to close the estimate - these terms cancel pairwise using integration by parts. The resulting term can then be estimated as follows.
\eq{\alg{
&\Big|-\frac{2}{\sinh(\T)}\sum_{k=1}^{s-1}(-1)^k\int \langle[\Delta^k_{g,\gamma}(g-\gamma),N]\Delta_{g,\gamma}(g-\gamma),\Sigma\rangle \mu_g\Big|\\
&\qquad\qquad\qquad\qquad\qquad\qquad\qquad\qquad\qquad\leq \frac{C}{\sinh(\T)} \Abkg{N}s\Abkg{g-\gamma}s\Abkg{\Si}{s-1}
}}

\noindent This term contributes to the last term on the RHS of \eqref{pt II+III}.\\

\noindent The term in the fourth line is evaluated using $\overset{\,\circ}R_\gamma(g-\gamma)_{ij}=R_{ikjl}(\gamma)(g-\gamma)^{kl}$. We estimate
\eq{
\Big|\frac{-4}{\sinh(\T)}\sum_{k=1}^{s-1}\int \langle N \overset{\,\circ}R_\gamma(g-\gamma),\Delta^k_{g,\gamma}\Si\rangle\mu_g  \Big|\leq \frac{C}{\sinh(\T)}\Abkg N s\Abkg {g-\ga}s\Abkg{\Si}{s-1},
}
which contributes again to the last term on the right hand side of \eqref{pt II+III}.\\

\noindent To evaluate the term in the fourth line one needs to observe the symmetry when using the integration by parts. This yields an estimate of the form
\eq{
\Big| \frac{2}{\sinh(\T)}\sum_{k=1}^{s-1}\int_M \langle \mcl L_X\Si,\Delta^k_{g,\gamma}\Si\rangle\mu_g\Big|\leq \frac{C}{\sinh(\T)}\Abkg{X}s\mathbf E_s(g,\Si).
}

\noindent The sixth and seventh line can be evaluated straightforwardly using the estimates for the $H^s$ norms of $C$ and $D$ as defined in \eqref{ABCD}. As the second term containing $D$ does not contain a good time factor it is important to note that this term contains a factor quadratic in the $H^{s-1}$-norm of $\Sigma$. 
Precisely, we have
\eq{\alg{
&\Big|\sum_{k=1}^{s-1}\frac{2}{\sinh(\T)}\int_M \langle C,\Delta_{g,\gamma}^k\Sigma\rangle \mu_{g}+2\frac{\cosh(\T)}{\sinh(\T)}\int_M \langle D,\Delta_{g,\gamma}^k\Sigma\rangle \mu_{g}\Big|\\
&\qquad\leq\frac{2}{\sinh(\T)}\Abkg{C}{s-1}\Abkg{\Si}{s-1}+2\frac{\cosh(\T)}{\sinh(\T)}\Abkg{D}{s-1}\Abkg{\Si}{s-1}\\
&\qquad\leq\frac{2C}{\sinh(\T)}\Abkg{\Si}{s-1}\Big[\mathbf E_s(g,\Si)(1+\Abkg N s+\sqrt{\mathbf E_s(g,\Si)})\Big]\\
&\qquad\quad+2C\frac{\cosh(\T)}{\sinh(\T)}\Abkg{\Si}{s-1}^4,
}}
which contributes to the second term and the third term on the RHS. Note, that by smallness of $\Si$ we can absorb terms with higher exponents into the explicitly given one.\\

\noindent The first term in the sixth line has been evaluated in \eqref{comm 1} and clearly is quadratic in the $H^{s-1}$-norm of $\Sigma$. The other factors yield the necessary energy factor. \\
We recall the estimate
\eq{
\Big|\sum_{k=1}^{s-1}\int_M\langle\Sigma,[\partial_\T,\Delta_{g,\gamma}^k]\Sigma\rangle\mu_{g}\Big|\leq C\left\|\Sigma\right\|^2_{H^{s-1}(g)}\left\|\partial_\T g\right\|_{H^{s-2}(g)},
}
which in combination with the estimate for the last factor, \eqref{est-dtg} shows that the terms can be estimated as claimed contributing to the second and third term on the RHS of \eqref{pt II+III}.

The second term in the eigth line is determined by the time derivative of the volume form. We use the identity
\eq{
\p \T\mu_g=\frac12 g^{ij}\p \T g_{ij}\mu_g =n\Big(\frac{\cosh(\T)}{\sinh(\T)}(N-\frac1n)+\frac{1}{\sinh(\T)}\na_iX^i\Big)\mu_g
}
and estimate
\eq{
\Big|\sum_{k=1}^{s-1}\int_M\langle\Sigma,\Delta_{g,\gamma}^k\Sigma\rangle\partial_\T\mu_{g}\Big|\leq C\Bigg[\frac{\cosh(\T)}{\sinh(\T)}\Abkg{N-\frac1n}{s-1}+\frac1{\sinh (\T)}\Abkg{X}{s}\Bigg]\Abkg{\Si}{s-1}^2.
}

\noindent These terms can be estimated by the second and third term in \eqref{pt II+III}.\\

\noindent The terms in the lines nine to twelve of \eqref{pt II+III} are of the same type as the terms above. We briefly describe the way to estimate them in the following.

\noindent The ninth and tenth line can be estimated as follows
\eq{\alg{
&\Big|\sum_{k=1}^{s}(-1)^k\frac{\cosh(\T)}{\sinh(\T)}\int_M \langle A,\Delta_{g,\gamma}^k(g-\gamma)\rangle\mu_g+\frac{1}{\sinh(\T)}\int_M \langle B,\Delta_{g,\gamma}^k(g-\gamma)\rangle\mu_g\Big|\\
&\quad\leq C\Big[\frac{\cosh(\T)}{\sinh(\T)}\Abkg{A}s\Abkg{g-\ga}s+\frac{1}{\sinh(\T)}\Abkg{B}s\Abkg{g-\ga}s\Big].
}}
In combination with the estimates for $A$ and $B$ we can estimate these terms by the terms on the right hand side of \eqref{pt II+III}. The commutator term in the eleventh line has been already evaluated in \eqref{comm 2}, which yields an estimate of the form as treated above. Finally, the last line simply contains the time derivative of the metric and can be treated as the corresponding term above.\\

\noindent
We have analyzed all relevant terms in the estimate, which in combination with the argument following \eqref{pt II+III} finishes the proof.

\end{proof}

%%%%%%%%
%%%%%%%%
%%%%%%%%

\subsection{Improved decay} \label{ssec : imprdec}
We proceed by deriving an energy estimate for the Sobolev norm of the trace free part of the second fundamental form in one order of regularity below the maximal regularity.
This estimate holds under the condition that boundedness of the total energy of maximal regularity is given. The structure of this estimate is such that it eventually leads to decay of the trace free part of the second fundamental form in this regularity.

\begin{lem}\label{lem : impr dec}
Let 
\eq{
\mathbf H_{s-2}(\Sigma)\equiv\sum_{k=0}^{s-2}(-1)^k(\Sigma,\Delta^k_{g,\gamma}\Sigma)_{L^2(g,\ga)}.
}
Assume for some $0<\varepsilon<1$ 
\eq{
\mathbf E_{s}(g,\Si)\leq \varepsilon.
}
Then
\eq{\alg{
\p \T\mathbf H_{s-2}(\Si)&\leq -2(n-1)\frac{\cosh(\T)}{\sinh(\T)}\mathbf H_{s-2}(\Si)+C\frac{\cosh(\T)}{\sinh(\T)}\Big(\mathbf H_{s-2}(\Si)\Big)^2\\
&\quad+\frac{C}{\sinh(\T)}\Big[\Ab{X}_{H^{s-1}(g)}\mathbf H_{s-2}(\Si)\\
&\qquad\qquad\qquad+(1+\Abkg N {s-1}+\sqrt{\varepsilon})\sqrt{\varepsilon}\sqrt{\mathbf H_{s-2}(\Si)}\Big].
}}
\end{lem}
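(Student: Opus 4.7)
The plan is to imitate the energy computation of Lemma \ref{lem : en-est}, but now at one order of regularity lower and only for the $\Sigma$-part of the energy. First I would differentiate
\eq{
\mathbf{H}_{s-2}(\Sigma)=\sum_{k=0}^{s-2}(-1)^k(\Sigma,\Delta^k_{g,\gamma}\Sigma)_{L^2(g,\gamma)}
}
in $\mathrm{T}$, producing three kinds of contributions: (i) an inner product $2\langle\partial_\mathrm{T}\Sigma,\Delta^k_{g,\gamma}\Sigma\rangle$, (ii) a commutator $\langle\Sigma,[\partial_\mathrm{T},\Delta^k_{g,\gamma}]\Sigma\rangle$, and (iii) a volume-form variation $\langle\Sigma,\Delta^k_{g,\gamma}\Sigma\rangle\,\partial_\mathrm{T}\mu_g$. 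For (i), I would substitute the second evolution equation of \eqref{evolution}.

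The principal term in $\partial_\mathrm{T}\Sigma$, namely $-\frac{\cosh(\mathrm{T})}{\sinh(\mathrm{T})}(n-1)\Sigma$, directly produces the leading negative contribution $-2(n-1)\frac{\cosh(\mathrm{T})}{\sinh(\mathrm{T})}\mathbf{H}_{s-2}(\Sigma)$. The Lie derivative piece $\mathcal{L}_X\Sigma$ is handled by exploiting the symmetry of $\Delta^k_{g,\gamma}$ together with integration by parts, exactly as in the proof of Lemma \ref{lem : en-est}, to obtain a bound of the form $\frac{C}{\sinh(\mathrm{T})}\|X\|_{H^{s-1}(g)}\mathbf{H}_{s-2}(\Sigma)$. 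The elliptic piece $\frac{n}{\sinh(\mathrm{T})}N(-\tfrac12\Delta_{g,\gamma}(g-\gamma)-\mathring{R}_\gamma(g-\gamma))$ is where the reduction in regularity is crucial: since $\Delta_{g,\gamma}(g-\gamma)\in H^{s-2}$ and we only pair against derivatives of $\Sigma$ up to order $s-2$, Cauchy--Schwarz and the energy hypothesis directly give
\eq{
\tfrac{C}{\sinh(\mathrm{T})}(1+\|N\|_{H^{s-1}})\,\|g-\gamma\|_{H^s}\,\sqrt{\mathbf{H}_{s-2}(\Sigma)}\,\leq\,\tfrac{C}{\sinh(\mathrm{T})}(1+\|N\|_{H^{s-1}})\sqrt{\varepsilon}\,\sqrt{\mathbf{H}_{s-2}(\Sigma)}.
}
No cancellation-via-integration-by-parts trick is needed here, unlike in the top-regularity estimate.

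Next I would treat the perturbation terms $C$ and $D$ from \eqref{ABCD} using the bounds of \eqref{eq-resc-ev} at level $s-2$. Since $C$ is of order $\sqrt{\varepsilon}\cdot(\text{energy-controlled})$ and carries the factor $1/\sinh(\mathrm{T})$, it contributes to the last line of the claimed estimate. The term $D=-(n-2)(Nn-1)\Sigma$ is the subtle one: it carries the bad coefficient $\cosh(\mathrm{T})/\sinh(\mathrm{T})$, but $\|Nn-1\|_{H^{s-1}}\lesssim \|\Sigma\|_{H^{s-2}}^2$ by the lapse estimate in Lemma \ref{ellipticestimates}, so $\|D\|_{H^{s-2}}\lesssim \|\Sigma\|_{H^{s-2}}^3$; pairing with $\Sigma$ yields exactly a $\cosh(\mathrm{T})/\sinh(\mathrm{T})\cdot \mathbf{H}_{s-2}(\Sigma)^2$ term, which is the only place the quadratic in $\mathbf{H}_{s-2}$ appears.

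For (ii) and (iii), the commutator identity \eqref{commutator}--\eqref{schema} and the identity $\partial_\mathrm{T}\mu_g=\tfrac12 g^{ab}\partial_\mathrm{T}g_{ab}\,\mu_g$ reduce everything to estimates on $\partial_\mathrm{T}g$; plugging in \eqref{est-dtg} (applied at $H^{s-2}$-level) and using $\mathbf{E}_s\le \varepsilon\le 1$ shows these contributions are absorbed into the last two terms. The main obstacle I anticipate is book-keeping the $\cosh(\mathrm{T})/\sinh(\mathrm{T})$ factors so that none of them multiplies a term that is merely linear in $\sqrt{\mathbf{H}_{s-2}(\Sigma)}$, since such a term would destroy the decay mechanism; the quadratic structure of $D$ and the smallness of $\|g-\gamma\|_{H^s}$ (which absorbs one of the $\cosh/\sinh$ factors into $\sqrt{\varepsilon}$) are what makes this accounting work. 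Summing all contributions yields the stated inequality; the positive-curvature case follows by replacing $\sinh$ and $\cosh$ via the analogue of \eqref{resc-evol-1}--\eqref{resc-evol-2} in the reversed CMC rescaling.
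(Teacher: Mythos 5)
Your proposal is correct and follows essentially the same route as the paper, which simply reruns the computation of Lemma \ref{lem : en-est} for the $\Sigma$-part of the energy alone; your identification of $D$ as the sole source of the quadratic term $\frac{\cosh(\T)}{\sinh(\T)}\mathbf H_{s-2}(\Si)^2$, of the Lie-derivative term as the source of the $\|X\|_{H^{s-1}(g)}\mathbf H_{s-2}(\Si)$ contribution, and of the elliptic term as the source of the $\sqrt{\varepsilon}\sqrt{\mathbf H_{s-2}(\Si)}$ contribution is exactly right. One small correction: the pairing $\big(N\Delta_{g,\gamma}(g-\gamma),\Delta^{s-2}_{g,\gamma}\Sigma\big)_{L^2}$ does still require integration by parts, since $\Delta^{s-2}_{g,\gamma}\Sigma$ carries $2(s-2)$ derivatives of $\Sigma$ and a raw Cauchy--Schwarz would produce $\|\Sigma\|_{H^{2(s-2)}}$ rather than $\sqrt{\mathbf H_{s-2}(\Si)}$ --- what becomes dispensable at this lower regularity is only the pairwise \emph{cancellation} against the metric-evolution term, not the redistribution of derivatives itself, which is precisely the point the paper singles out (``integration by parts and H\"older's estimate'').
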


\begin{proof}
The proof follows from the identical computations in the proof of lemma \ref{lem : en-est}. The higher order term containing the Laplacian is estimated using integration by parts and H\"older's estimate.
\end{proof}

\noindent In combination with the elliptic estimates for lapse and shift \eqref{ellipticestimates} we obtain the following corollary.

\begin{cor}
Under the assumptions of lemma \ref{lem : impr dec} the following estimate holds.

\eq{\alg{
\p \T\mathbf H_{s-2}(\Si)&\leq -2(n-1)\frac{\cosh(\T)}{\sinh(\T)}\mathbf H_{s-2}(\Si)+C\frac{\cosh(\T)}{\sinh(\T)}\Big(\mathbf H_{s-2}(\Si)\Big)^2\\
&\quad+\frac{C}{\sinh(\T)}\Big[\Big(\mathbf H_{s-2}(\Si)+\sqrt{\varepsilon}(C+\mathbf H_{s-2}(\Si))\Big)\Big(\mathbf H_{s-2}(\Si)\Big)^{3/2}\\
&\qquad\qquad\qquad+(1+\mathbf{H}_{s-3}(\Si)+\sqrt{\varepsilon})\sqrt{\varepsilon}\sqrt{\mathbf H_{s-2}(\Si)}\Big].
}}
As in particular 
\eq{
\mathbf{H}_{s-2}(\Si)<\varepsilon
}
this simplifies to
\eq{\alg{
\p \T\sqrt{\mathbf H_{s-2}(\Si)}&\leq -(n-1)\frac{\cosh(\T)}{\sinh(\T)}\sqrt{\mathbf H_{s-2}(\Si)}+C\frac{\cosh(\T)}{\sinh(\T)}\Big(\mathbf H_{s-2}(\Si)\Big)^{3/2}+\frac{C\sqrt{\varepsilon}}{\sinh(\T)}.
}}
\end{cor}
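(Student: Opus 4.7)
The plan is to derive the corollary directly from the bound in Lemma~\ref{lem : impr dec} by substituting the elliptic estimates of Lemma~\ref{ellipticestimates}, applied at regularity $s-1$, for $\Abkg{X}{s-1}$ and $\Abkg{N}{s-1}$, and then exploiting $\mathbf E_s(g,\Si)\leq\varepsilon$ to recast all perturbation factors in terms of $\mathbf H_{s-2}(\Si)$, $\mathbf H_{s-3}(\Si)$ and $\varepsilon$. The sign-definite leading term and the quadratic $\frac{\cosh(\T)}{\sinh(\T)}(\mathbf H_{s-2}(\Si))^2$ term are already present in the lemma and are carried over unchanged (possibly with relabelled constants).

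For the first inequality of the corollary I would proceed in two substeps. The lapse bound $\Abkg{N-\tfrac1n}{s-1}\lesssim\Abkg{\Si}{s-3}^2\lesssim\mathbf H_{s-3}(\Si)$ gives $\Abkg{N}{s-1}\leq C(1+\mathbf H_{s-3}(\Si))$, which when substituted into the term $(1+\Abkg{N}{s-1}+\sqrt\varepsilon)\sqrt\varepsilon\sqrt{\mathbf H_{s-2}(\Si)}$ produces the last bracketed summand. The shift bound \eqref{Xestimate} at regularity $s-1$ decomposes $\Abkg{X}{s-1}$ into a cubic piece of order $\Abkg{\Si}{s-3}^3$, a mixed piece of order $\Abkg{N}{s-3}\Abkg{\Si}{s-3}\Abkg{g-\ga}{s-2}$, and an explicitly weighted piece $\cosh(\T)\Abkg{\Si}{s-3}^2$. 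Using $\Abkg{\Si}{s-3}^2\lesssim\mathbf H_{s-2}(\Si)$ together with $\Abkg{g-\ga}{s-2}\leq\sqrt\varepsilon$ (from $\mathbf E_s\leq\varepsilon$), one obtains $\Abkg{X}{s-1}\lesssim (\mathbf H_{s-2}(\Si))^{3/2}+\sqrt\varepsilon(1+\mathbf H_{s-3}(\Si))\sqrt{\mathbf H_{s-2}(\Si)}+\cosh(\T)\mathbf H_{s-2}(\Si)$. Multiplying by $\mathbf H_{s-2}(\Si)$ and dividing by $\sinh(\T)$, the first two pieces assemble into the summand $\frac{C}{\sinh(\T)}\bigl(\mathbf H_{s-2}(\Si)+\sqrt\varepsilon(C+\mathbf H_{s-2}(\Si))\bigr)(\mathbf H_{s-2}(\Si))^{3/2}$, whereas the $\cosh(\T)$-piece produces $\frac{\cosh(\T)}{\sinh(\T)}(\mathbf H_{s-2}(\Si))^2$ and is absorbed into the already-present quadratic term.

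For the simplification I would exploit $\mathbf H_{s-2}(\Si)\leq\mathbf E_s(g,\Si)\leq\varepsilon<1$ throughout the bracket: the prefactor $(1+\mathbf H_{s-3}(\Si)+\sqrt\varepsilon)$ is bounded by an absolute constant, and after one extra division by $\sqrt{\mathbf H_{s-2}(\Si)}$ the expression $\bigl(\mathbf H_{s-2}(\Si)+\sqrt\varepsilon(C+\mathbf H_{s-2}(\Si))\bigr)\mathbf H_{s-2}(\Si)$ is bounded by $C\sqrt\varepsilon$ using $\mathbf H_{s-2}(\Si)^2\leq\varepsilon\mathbf H_{s-2}(\Si)$. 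Hence the whole bracket of the corollary is dominated by $C\sqrt\varepsilon\sqrt{\mathbf H_{s-2}(\Si)}$. Dividing the first inequality through by $2\sqrt{\mathbf H_{s-2}(\Si)}$ and using the chain rule $\partial_\T\sqrt{\mathbf H_{s-2}(\Si)}=(2\sqrt{\mathbf H_{s-2}(\Si)})^{-1}\partial_\T\mathbf H_{s-2}(\Si)$ then yields the second inequality, with the quadratic term becoming $\frac{\cosh(\T)}{\sinh(\T)}(\mathbf H_{s-2}(\Si))^{3/2}$ and the remaining bracket collapsing to $\frac{C\sqrt\varepsilon}{\sinh(\T)}$.

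The only delicate point in the whole argument is the correct bookkeeping of the $\cosh(\T)$ weight: the piece coming from the shift estimate must be tracked into the quadratic $\frac{\cosh(\T)}{\sinh(\T)}(\mathbf H_{s-2}(\Si))^2$ term, so that in the simplified form its descendant $\frac{\cosh(\T)}{\sinh(\T)}(\mathbf H_{s-2}(\Si))^{3/2}$ retains the $\cosh(\T)$ factor rather than being misassigned to the $\frac{1}{\sinh(\T)}$ cluster; everything else reduces to routine interpolation and smallness bookkeeping.
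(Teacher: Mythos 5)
Your proposal is correct and follows exactly the route the paper intends: the paper gives no written proof beyond the remark that the corollary follows "in combination with the elliptic estimates for lapse and shift," and your substitution of Lemma \ref{ellipticestimates} into Lemma \ref{lem : impr dec}, including the absorption of the $\cosh(\T)$-weighted piece of the shift bound into the quadratic term and the final division by $2\sqrt{\mathbf H_{s-2}(\Si)}$, is precisely that argument.
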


\noindent In the following we deduce an estimate for $\mathbf{H}_{s-2}(\Si)$.

\begin{lem}
Under the same assumptions as in the previous lemmas and if $n>2$, 
\eq{
\sqrt{\mathbf{H}_{s-2}(\Si)}\leq \frac{C\sqrt{\varepsilon}}{\sinh(\T)}.
}
If $n=2$, the estimate is
\eq{
\sqrt{\mathbf{H}_{s-2}(\Si)}\leq \frac{C\sqrt{\varepsilon}}{\sinh^{1/2}(\T)}.
}
\end{lem}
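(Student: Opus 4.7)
The plan is to recast the differential inequality from the preceding corollary as a linear scalar ODE and integrate it with an appropriate integrating factor. Set $Y(\T) := \sqrt{\mathbf H_{s-2}(\Si)}$. Using $Y^{2}=\mathbf H_{s-2}(\Si)\leq \mathbf E_{s}(g,\Si)\leq \varepsilon$, I can absorb the cubic term $C(\cosh\T/\sinh\T)\,Y^{3}$ into the linear dissipative term $-(n-1)(\cosh\T/\sinh\T)\,Y$ by writing $CY^{3}\leq C\varepsilon\, Y$. How much of the linear term I sacrifice to this absorption determines the effective dissipation rate $\mu$, and the correct choice of $\mu$ differs between $n>2$ and $n=2$.

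For $n>2$, I need only borrow an arbitrarily small portion of the linear term, obtaining $\mu=n-1-C\varepsilon>1$ for $\varepsilon$ small. Applying the integrating factor $\sinh(\T)^{\mu}$ to
\eq{
Y'(\T)\leq -\mu\,\frac{\cosh(\T)}{\sinh(\T)}\,Y+\frac{C\sqrt{\varepsilon}}{\sinh(\T)}
}
yields $(\sinh(\T)^{\mu}Y)'\leq C\sqrt{\varepsilon}\,\sinh(\T)^{\mu-1}$. Since $\mu-1>0$, direct integration from the initial time $\T_{0}=\arcsinh(1)$ gives a bound of the form $\sinh(\T)^{\mu}Y(\T)\leq C\bigl(Y(\T_{0})+\sqrt{\varepsilon}\,\sinh(\T)^{\mu-1}\bigr)$, and dividing by $\sinh(\T)^{\mu}$ produces the claimed $1/\sinh(\T)$ decay, using $Y(\T_{0})\leq \sqrt{\varepsilon}$ which follows from the control of $\mathbf E_{s}$ at the initial slice.

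For $n=2$ the linear coefficient is the critical value $-1$, so the naive integrating factor $\sinh(\T)$ would leave the source as $C\sqrt{\varepsilon}$, producing linear growth in $\T$ of the weighted quantity and hence only a marginal improvement over $\sinh(\T)^{-1}$. I sidestep this by splitting the linear term in half and using one half to absorb the cubic, which is permissible because $CY^{2}\leq C\varepsilon\leq 1/2$ for $\varepsilon$ small. What remains is
\eq{
Y'(\T)\leq -\tfrac{1}{2}\frac{\cosh(\T)}{\sinh(\T)}\,Y+\frac{C\sqrt{\varepsilon}}{\sinh(\T)}.
}
With integrating factor $\sinh(\T)^{1/2}$ the right-hand side becomes $C\sqrt{\varepsilon}\,\sinh(\T)^{-1/2}$, which decays exponentially and is thus integrable on $[\T_{0},\infty)$; integration yields $\sinh(\T)^{1/2}Y(\T)\leq C\sqrt{\varepsilon}$, giving the stated $\sinh^{-1/2}$ rate.

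The only subtle point is this trade-off in the $n=2$ case. Because the unperturbed dissipation rate equals exactly the critical value $1$, one must give up enough of the linear decay to make the integrated source converge, and this costs precisely a factor $\sinh^{1/2}$ compared with the higher-dimensional estimate; the loss is exactly what distinguishes the two stated bounds. For $n>2$ there is room to spare, and the standard integrating-factor argument closes without any loss.
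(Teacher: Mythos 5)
Your proof is correct and takes essentially the same route as the paper: absorb the cubic term into the linear dissipative term and integrate the resulting linear differential inequality with the integrating factor $\sinh(\T)^{\mu}$. For $n>2$ you are in fact slightly more careful than the paper, whose displayed inequality keeps the coefficient exactly $-1$ and would upon direct integration only give a $\T/\sinh(\T)$ bound; your choice $\mu=n-1-C\varepsilon>1$ is what actually yields the stated $1/\sinh(\T)$ rate.
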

\begin{proof}If $n>2$, we have
\eq{\alg{
\p \T\sqrt{\mathbf H_{s-2}(\Si)}&\leq -\frac{\cosh(\T)}{\sinh(\T)}\sqrt{\mathbf H_{s-2}(\Si)}+\frac{C\sqrt{\varepsilon}}{\sinh(\T)}.
}}
and if $n=2$, we have
\eq{\alg{
\p \T\sqrt{\mathbf H_{s-2}(\Si)}&\leq -\frac{1}{2}\frac{\cosh(\T)}{\sinh(\T)}\sqrt{\mathbf H_{s-2}(\Si)}+\frac{C\sqrt{\varepsilon}}{\sinh(\T)},
}}
provided that $\varepsilon$ is small enough. These differential inequalities immediately imply the desired decay.
\end{proof}
\begin{lem}\label{shiftdecay0}
Under the same assumptions as above, the shift vector admits the estimates
\begin{align}
\left\|X\right\|_{s}\leq C\varepsilon\cdot \sinh^{-1}(\T)
\end{align}
if $n>2$ and
\begin{align}
\left\|X\right\|_{s}\leq  C\varepsilon\cdot\sinh^{-1/2}(\T),
\end{align}
if $n=2$.
\end{lem}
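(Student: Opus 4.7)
The idea is simply to feed the improved decay of $\Sigma$ from the preceding lemma into the elliptic estimate for $X$ derived in Lemma \ref{ellipticestimates}, which reads
\eq{\alg{
\Ab{X}_{s}&\leq C(\delta_g,\delta_\Sigma)\Big[\Ab{\Sigma}^3_{s-2}+\Ab{N}_{s-2}\Ab{\Sigma}_{s-2}\Ab{g-\gamma}_{s-1}\\
&\qquad\qquad\qquad+n\cosh(\T)(1-\tfrac{2}{n})\Ab{\Sigma}^2_{s-2}\Big].
}}
The bootstrap assumption $\mathbf E_s(g,\Si)\leq\varepsilon$ gives $\Ab{g-\gamma}_{s-1}\leq\sqrt{\varepsilon}$, and from Lemma \ref{ellipticestimates} we have $\Ab{N-\tfrac1n}_s\leq C\varepsilon$, so that $\Ab{N}_{s-2}\leq C$ uniformly. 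The only nontrivial input is the time decay of $\Ab{\Sigma}_{s-2}$, which is controlled by $\sqrt{\mathbf H_{s-2}(\Si)}$ via the equivalence of energies with Sobolev norms.

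I would then just substitute term by term. In the case $n>2$ one has $\sqrt{\mathbf H_{s-2}(\Si)}\leq C\sqrt{\varepsilon}/\sinh(\T)$, so the three contributions become
\eq{
\Ab{\Sigma}^3_{s-2}\lesssim\frac{\varepsilon^{3/2}}{\sinh^3(\T)},\quad \Ab{N}_{s-2}\Ab{\Sigma}_{s-2}\Ab{g-\gamma}_{s-1}\lesssim\frac{\varepsilon}{\sinh(\T)},\quad \cosh(\T)\Ab{\Sigma}^2_{s-2}\lesssim\frac{\cosh(\T)\,\varepsilon}{\sinh^2(\T)},
}
and since $\cosh(\T)/\sinh(\T)$ is bounded on $[\arcsinh(1),\infty)$ the last term is the dominant one and is $\lesssim\varepsilon/\sinh(\T)$, giving the claimed estimate.

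The case $n=2$ is what drives the dimensional split. The key observation is that the coefficient $(1-\tfrac{2}{n})$ in the $\cosh$-term vanishes identically, so the potentially largest contribution disappears. One is left with the cubic and the mixed term, but now the weaker decay $\sqrt{\mathbf H_{s-2}(\Si)}\leq C\sqrt{\varepsilon}/\sinh^{1/2}(\T)$ from the preceding lemma yields
\eq{
\Ab{\Sigma}^3_{s-2}\lesssim\frac{\varepsilon^{3/2}}{\sinh^{3/2}(\T)},\qquad \Ab{N}_{s-2}\Ab{\Sigma}_{s-2}\Ab{g-\gamma}_{s-1}\lesssim\frac{\varepsilon}{\sinh^{1/2}(\T)},
}
and the second bound dominates. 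There is no real obstacle here; the proof is bookkeeping. The only subtle point worth flagging is the exact role of the coefficient $(1-2/n)$, which is what forces the two different exponents in the statement, and the fact that $\cosh(\T)/\sinh(\T)$ must be estimated on the appropriate time interval so that it can be absorbed into a constant rather than producing growth.
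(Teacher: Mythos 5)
Your proof is correct and follows exactly the route the paper intends: the paper's own proof is the one-line remark that the lemma "follows from the previous lemma and \eqref{Xestimate}", and you have simply carried out the term-by-term substitution, correctly identifying that the vanishing of the coefficient $(1-2/n)$ for $n=2$ is what compensates for the weaker decay of $\Sigma$ in that case. No gaps.
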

\begin{proof}
This follows from the previous lemma and \eqref{Xestimate}.
\end{proof}
\begin{rem}
From the decay of $X$ and $\Sigma$, we also get an exponential decay of $\partial_\T g$ in the $H^{s-2}$-norm, i.e.\ the rescaled metrics converge to a limit metric in $H^{s-2}$ as $\T\to \infty$. This follows from \eqref{evolution} and \eqref{eq-resc-ev}.
\end{rem}

%%%%%%%%
%%%%%%%%
%%%%%%%%

\subsection{Proof of the main theorem}\label{ssec : proof}
Using the previous lemmas we are now able to state the proof of Theorem \ref{main-thm-neg} and Theorem \ref{main-thm}. We give the explicit proof for Theorem \ref{main-thm-neg}, the positive case is analogous.

\begin{proof}[Proof of theorem \ref{main-thm-neg}]
Let $\varepsilon>0$ be fixed. Before we consider $\delta$-small CMCSH-initial data at initial time $\sqrt{2}$,
% By local stability we can assure existence of the solution until any fixed $T_0$ by choosing $\delta$ sufficiently small.
assume we start with arbitrary initial data. 
By Theorem \ref{CMCthm} we have a small $H^{s'}\times H^{s'-1}$ neighbourhood $\mathcal{V}$ in the set of arbitrary initial data such that the MGHD of any data in $\mathcal{V}$ admits a hypersurface of constant mean curvature $-\sqrt{2}$ and the induced data $(g_0,\Sigma_0)$ stays in a small $H^{s}\times H^{s-1}$ neighbourhood $\mathcal{U}$ of the initial data of the background.
 By Theorem \ref{slicethm} we can pull back the data along a diffeomorphism $\varphi$ such that $(\varphi^*g_0,\varphi^*\Sigma_0)$ satisfies the CMCSH gauge and is $\delta$-close to the initial data of the background solution. From now on, we let the data evolve under the Einstein-flow and the corresponding solution will be isometric to the MGHD of the initial data we started with.
\\
Without loss of generality, assume that $\varepsilon$ is so small that \eqref{en-est} holds as long as $E_s(g(t),\Sigma(t))<2\varepsilon$.
Let $\T_{\mathrm{max}}$ be the maximal existence time of the solution and suppose that $\T_{\max}<\infty$.
 By the Gronwall inequality,
\eq{
\alg{
E_s(g(\T_{\mathrm{max}}),\Sigma(\T_{\mathrm{max}}))\leq &e^{C(2\varepsilon)\int_{\sqrt{2}}^{\T_{\mathrm{max}}}
\frac{d\T}{\sinh(\T)}}\cdot E_s(g(\sqrt{2}),\Sigma(\sqrt{2}))\\
\leq &e^{C(2\varepsilon)\int_{\sqrt{2}}^{\infty}
\frac{d\T}{\sinh(\T)}}\cdot E_s(g(\sqrt{2}),\Sigma(\sqrt{2}))
}
}
which shows that the left hand side is bounded by
some arbitrarily small $\varepsilon_1$ supposed that $\delta$ was chosen small enough. Due to local existence, this contradicts the maximality of the existence time. Therefore, $\T_{\mathrm{max}}=\infty$ and since the energy is equivalent to the $H^s\times H^{s-1}$-norm of the data, we also obtained the desired bound on the solution.

\noindent To complete the proof we show global hyperbolicity and future- and null geodesic completeness.
For this purpose, some properties of the lapse, the shift, the second fundamental form and the family of Riemannian metrics have to be checked. At first, by the estimate of the main energy, lemma \ref{ellipticestimates} and lemma \ref{shiftdecay0}, we can choose for any $\varepsilon>0$ a neighbourhood of the initial data such that
\eq{\alg{
\left\|g-\gamma\right\|_{C^0(\gamma)}&\leq\varepsilon,\qquad
\left\|N-\frac{1}{n}\right\|_{C^0}\leq\varepsilon\sinh^{-1}(\T),\qquad
\left\|\nabla N\right\|_{C^0(g)}\leq\varepsilon\sinh^{-1}(\T),\\
\left\|\Sigma\right\|_{C^0(g)}&\leq \varepsilon\sinh^{-1/2}(\T), \qquad \left\|X\right\|_{C^0(g)}\leq\varepsilon \sinh^{-1/2}(\T)
}
}
for all $\T\geq \T_0$. Recall that the non-rescaled metrics $\tilde{g}$ satisfy
$\tilde{g}=s(\tau)^{-1}g=\sinh^2(\T)g$ which implies that for all tangent vectors $v$, we have
\begin{align}\label{metricbound}\gamma_{ij}v^iv^j\leq Cg_{ij}v^iv^j\leq C \sinh^{-2}(\T)\tilde{g}_{ij}v^iv^j
\end{align}
which implies that the metrics $\tilde{g}$ are uniformly bounded from below by $\gamma$. For $\tilde{\Sigma}=s^{-1/2}(\tau)\Sigma$, we have
\begin{align}\label{sigmadecay}|\tilde{\Sigma}|_{\tilde{g}}\leq \sinh^{-1}(\T)\cdot|\Sigma|_{g}\leq \sinh^{-3/2}(\T)\cdot\varepsilon
\end{align}
For the original shift vector $\tilde{X}=\sinh(\T)X$, we have
\begin{align}
\left\|\tilde{X}\right\|_{C^0(\tilde{g})}\leq\varepsilon \sinh^{3/2}(\T).
\end{align}
However, this is the shift vector from the CMC-gauge and  we have to define the shift vector which corresponds to the time function $\T$:
\begin{align}\bar{X}^{\flat}:=\left\langle \frac{\partial}{\partial \T},.\right\rangle=\frac{n}{\sinh^2(\T)}\left\langle \frac{\partial}{\partial \tau},.\right\rangle=\frac{n}{\sinh^2(\T)}\tilde{X}^{\flat}.
\end{align}
Here, $\bar{X}^{\flat}$ and $\tilde{X}^{\flat}$ are the $1$-forms which are equivalent to $\bar{X}$ and $\tilde{X}$ via the metric $\tilde{g}$. Now we immediately get
\begin{align}\label{shiftdecay}
\left\|\bar{X}\right\|_{\tilde{g}}\leq\varepsilon n\cdot\sinh^{-1/2}(\T)
\end{align}
For the original Lapse function $\tilde{N}$ we have $\tilde{N}=s(\tau)^{-1}N=\sinh^2(\T)N$, which yields
\begin{align}
\left\|\sinh^{-2}(\T)\tilde{N}-\frac{1}{n}\right\|_{C^0}\leq\varepsilon\sinh^{-1}(\T),\qquad
\left\|\tilde{\nabla}\tilde{N}\right\|_{C^0(\tilde{g})}\leq \varepsilon .
\end{align}
However, as for the shift vector, $\tilde{N}$ was obtained by the $CMC$-gauge. We now compute the lapse-function $\bar{N}$ according to the time $\T$. We have
\eq{
\alg{\bar{N}^2:=&-\left\langle \frac{\partial}{\partial \T}, \frac{\partial}{\partial \T}\right\rangle+|\bar{X}|^2_{\tilde{g}}\\=&-\left(\frac{n}{\sinh^2(\T)}\right)^2\left(\left\langle \frac{\partial}{\partial \tau}, \frac{\partial}{\partial \tau}\right\rangle-|\tilde{X}|^2_{\tilde{g}}\right)=\left(\frac{n}{\sinh^2(\T)}\right)^2\tilde{N}^2
}
}
which immediately implies 
\begin{align}\label{lapsedecay}
\left\|\bar{N}-1\right\|_{C^0}\leq\varepsilon\cdot n\sinh^{-1}(\T),\qquad
\left\|\tilde{\nabla}\bar{N}\right\|_{C^0(\tilde{g})}\leq \varepsilon\cdot n \sinh^{-2}(\T).
\end{align}
By \cite[Theorem 2.1]{CBC02}, global hyperbolicity follows from \eqref{metricbound}, \eqref{lapsedecay} and \eqref{shiftdecay}.
Causal completeness of the solutions follows from \eqref{sigmadecay} and \eqref{lapsedecay} by using \cite[Theorem 3.2 and Corollary 3.3]{CBC02}.
\end{proof}

%%% REFERENCES

\vspace{0.1cm}
\noindent
\textsc{David Fajman\\
Faculty of Physics, University of Vienna,\\
Boltzmanngasse 5, 1090 Vienna, Austria}\\ 
\vspace{-0.4cm}\\
\texttt{David.Fajman@univie.ac.at}\\

%\vspace{0.1cm}
\noindent
\textsc{Klaus Kr\"oncke\\
Department of Mathematics, University of Hamburg ,\\
Bundesstrasse 55, 20146 Hamburg, Germany}\\ 
\vspace{-0.4cm}\\
\texttt{klaus.kroencke@uni-hamburg.de}\\

\end{document}